\newtheorem{theo}{Theorem}
\newtheorem{lem}{Lemma}[section]
\newtheorem{defi}[lem]{Definition}
\newtheorem{prop}[lem]{Proposition}
\newtheorem{rmk}[lem]{Remark}
\newcommand{\eps}{\varepsilon}
\newcommand{\qtext}[1]{\quad\mbox{#1}\quad}
\newcommand{\qqtext}[1]{\qquad\mbox{#1}\qquad}
\newcommand{\TV}[1]{{\|#1\|}}
\newcommand{\rd}{\mathrm d}
\newcommand{\R}{\mathbb{R}}
\newcommand{\narrowcv}{\overset{\ast}{\rightharpoonup}}
\newcommand{\grad}{\operatorname{grad}}
\newcommand{\pO}{{\partial\Omega}}
\newcommand{\bO}{{\bar\Omega}}
\newcommand{\QO}{{Q_{\bar\Omega}}}
\newcommand{\QG}{{Q_{\Gamma}}}
\newcommand{\g}{\gamma}
\newcommand{\G}{\Gamma}
\renewcommand{\k}{\kappa}
\newcommand{\M}{\mathcal{M}}
\newcommand{\W}{{\mathcal{W}}}
\newcommand{\Wrr}{\mathcal{W}_\k}
\newcommand{\WFR}{\mathcal{WFR}_\k}
\newcommand{\FR}{\mathcal{FR}_\k}
\newcommand{\A}{{\mathcal{A}}}
\newcommand{\C}{{\mathcal{C}}}
\newcommand{\E}{{\mathcal{E}}}
\newcommand{\F}{{\mathcal{F}}}
\renewcommand{\H}{{\mathcal{H}}}
\newcommand{\argmin}{\operatorname{Argmin}}
\newcommand{\supp}{\operatorname{supp}}
\newcommand{\dist}{\operatorname{dist}}
\newcommand{\CE}{{\mathcal{CE}}}
\newcommand{\Pp}{\mathcal{P}^{\oplus}}
\newcommand{\mrest}{
  \,\raisebox{-.127ex}{\reflectbox{\rotatebox[origin=br]{-90}{$\lnot$}}}\,%
}
\newcommand{\pf}{{}_{\#}}
\renewcommand{\P}{\mathcal{P}}
\renewcommand{\o}{\omega}
\renewcommand{\O}{\Omega}
\def\dive{\operatorname{div}}
\def\diveg{\operatorname{div}_{\Gamma}}
\numberwithin{equation}{section}
\begin{document}
\title{A new transportation distance with bulk/interface interactions and flux penalization}
\date{}
\author{L\'eonard Monsaingeon}

\maketitle
\abstract{
\noindent
We introduce and study a new optimal transport problem on a bounded domain $\bO\subset \R^d$, defined via a dynamical Benamou-Brenier formulation.
The model handles differently the motion in the interior and on the boundary, and penalizes the transfer of mass between the two.
The resulting distance interpolates between classical optimal transport on $\bO$ on the one hand, and on the other hand between two independent optimal transport problems set on $\O$ and $\pO$.
}

\bigskip
\noindent
{\bf Keywords: }
dynamical optimal transport;
Benamou-Brenier formulations;
unbalanced optimal transport;
Wasserstein distance;
Wasserstein-Fisher-Rao metric;
coupled Hamilton-Jacobi equations;
bulk/interface interaction

\tableofcontents

\section{Introduction}
In its Monge-Kantorovich formulation \cite{kantorovich1942translocation,monge1781memoire}, classical optimal transport consists in minimizing a transportation cost
$$
\min\limits_\pi\ \iint\limits_{\mathcal X\times\mathcal X}c(x,y)\rd\pi(x,y)
$$
among all transference plans $\pi\in\mathcal P(\mathcal X\times\mathcal X)$ with prescribed left and right marginals $\pi_x=\varrho_0\in\P(\mathcal X)$ and $\pi_y=\varrho_1\in\P(\mathcal X)$, two given probability measures over the base space $\mathcal X$.
Although the theory covers very general settings we shall focus in this paper exclusively on the quadratic cost $c(x,y)=\frac 12 d^2(x,y)$, the squared Euclidean distance on a smooth bounded (closed) domain $\bO\subset\R^d$.
The above minimization then defines the quadratic (squared) Wasserstein distance
$$
\W_{\bO}^2(\varrho_0,\varrho_1)=\min\limits_\pi\ \frac 12\iint\limits_{\bO\times\bO}d^2(x,y)\rd\pi(x,y).
$$
We refer to \cite{villani2003topics} for a rather soft introduction and to \cite{villani_BIG} for a comprehensive account of the theory and full bibliography, see also \cite{OTAM,peyre2019computational} for a more applied point of view.
\\

The classical Benamou-Brenier formula \cite{BB} allows to rewrite the static problem as a dynamical fluid-mechanics problem, namely the minimization of the kinetic energy
\begin{multline}
\label{eq:def_W2_rho_bO}
 \W^2_\bO(\varrho_0,\varrho_1)=
 \min _{\varrho,w}\left\{
\frac 12\iint\limits_{[0,1]\times\bO} \varrho|w|^2
\qqtext{s.t. }\partial_t\varrho +\dive (\varrho w)=0
\right\}
\\
=\min _{\varrho,H}\left\{
\frac 12\iint\limits_{[0,1]\times\bO} \frac{|H|^2}{\varrho}
\qqtext{s.t. }\partial_t\varrho +\dive H=0
\right\}
\end{multline}
with no-flux boundary conditions $H\cdot n=\varrho w\cdot n=0$ on $\pO$ and initial/terminal data $\varrho_0,\varrho_1$.
We refrain from writing any rigorous definitions and statements at this stage and refer to \cite{BB,brenier2003extended,villani2003topics,OTAM}.
Using the physical mass/momentum variables $(\varrho,H)=(\varrho,\varrho w)$ in \eqref{eq:def_W2_rho_bO} allows to recast the original minimization as a \emph{convex} optimization problem in the space of measures, and also paves the way for efficient numerical implementations \cite{peyre2019computational,OTAM} enjoying extremely general convergence properties \cite{lavenant2019unconditional}.
\\

In this work we introduce a new transportation model on $\bO$ that behaves differently in the interior and on the boundary while allowing for interactions between the two.
On can think of $\bO$ as an inner city and of $\G=\pO$ as a surrounding ring road, and the pair of nonnegative measures
$$
\rho=(\o,\g)\in\M^+(\bO)\times\M^+(\G)
$$
describes the densities of cars in the city and on the ring road, respectively.
The overall car density is described by the probability measure
$$
\varrho=\o+\g\in\P(\bO),
$$
the sum of the inner density $\o$ plus the density $\g$ of cars on the ring.
We will try as much as possible to use the same notational distinction between pairs $\rho=(\o,\g)$ and total density $\varrho=\o+\g$ in the whole paper.
Upon entering or leaving the ring road, drivers should pay a toll penalizing the car flux.
We will give a rigorous definition in Section~\ref{sec:definition_existence}, but at this stage our model can be informally written
\begin{multline}
\label{eq:formal_def_Wrr}
\Wrr^2(\rho_0,\rho_1):=
\min 
\Bigg\{
\quad
\iint\limits_{[0,1]\times\bO} \frac{|F|^2}{2\o} 
+ \iint\limits_{[0,1]\times \pO}\frac{|G|^2}{2\g}
+ \k^2\iint\limits_{[0,1]\times \pO}\frac{|f|^2}{2\g}
\\
\mbox{s.t. }
\begin{array}{ll}
 \partial_t\o +\dive F =0 & \mbox{in }\O
 \\
 F\cdot n =f & \mbox{on }\pO
\end{array}
\qqtext{and}
\partial_t\g +\dive G =f
\mbox{ in }\pO
\Bigg\}
\end{multline}
where the endpoints $\rho_0=(\o_0,\g_0),\rho_1=(\o_1,\g_1)$ are prescribed and such that $\varrho_0=\o_0+\g_0$ and $\varrho_1=\o_1+\g_1$ are probability measures.
Here $\k>0$ is a toll parameter, $F$ is the momentum in the interior, and $G$ is the momentum on the road.
The variable $f$ has two possible interpretations: When viewed from the interior, $f$ is just the normal outflux $F\cdot n$ of the city cars, but when viewed from $\pO$ it is rather a source term encoding the intake of cars entering from the city.
Correspondingly, the set $\G=\partial\O$ can be thought of in two different ways: First, as the boundary of the interior set $\O$ where fluxes might arise from/to the interior; And second, as an intrinsic set where $\g$ lives and moves, possibly exchanging mass with the outer world $\O$.
Depending on the context we will write $\partial\O$ or $\G$ to emphasize this idea.
\\

By construction our model preserves the total mass: Denoting $\o_t,\g_t$ the inner and boundary densities at time $t$ and evolving according ot the continuity equations appearing in \eqref{eq:formal_def_Wrr}, it is easy to check at least formally that the overall density $\varrho_t=\o_t+\g_t$ has constant mass.
Indeed since $\pO=\G$ is without boundaries, integration by parts gives
\begin{multline*}
\frac{d}{dt}\left(\int_{\bO} \varrho_t\right)=
\frac{d}{dt}\left(\int_\O \o_t + \int_\G \g_t\right)
\\
=\int_\O\{- \dive F_t\} +\int_\G \{-\dive G_t+f_t\}
=-\int_\pO F_t\cdot n +\int_\G f_t =0 .
  \end{multline*}
However of course, neither the mass of $\o_t$ nor that of $\g_t$ is conserved a priori, the whole point is precisely that mass can be exchanged between $\O$ and $\G$.
In order to tackle these respective mass variations we will leverage the theory of \emph{unbalanced optimal transport}, which has recently attracted considerable attention and significant efforts and resulted in particular in the construction of the so-called \emph{Wasserstein-Fisher-Rao} metric \cite{chizat2018interpolating,kondratyev2016new}, also known as the \emph{Hellinger-Kantorovich} distance \cite{liero2016optimal,liero2018optimal}.
The latter is a distance between arbitrary positive measures $\g_0,\g_1\in\M^+(\G)$ allowing for different masses, and can be roughly defined (here over the base space $\G=\pO$) as
\begin{equation}
\label{eq:def_WFR}
\WFR^2(\g_0,\g_1)=
\min \limits_{\g,G,f}
\Bigg\{
\quad
\iint\limits_{[0,1]\times \G}\frac{|G|^2}{2\g}
+ \k^2\iint\limits_{[0,1]\times \G}\frac{|f|^2}{2\g}
\qqtext{s.t.}
\partial_t\g +\dive G =f
\mbox{ in }\G
\Bigg\}.
\end{equation}
This can be seen as an \emph{infimal convolution} of the Fisher-Rao distance
\begin{equation}
\label{eq:def_formal_FR}
\FR^2(\g_0,\g_1)=
\min  \limits_{\g,f}
\Bigg\{
\quad
\iint\limits_{[0,1]\times \G}\k^2\frac{|f|^2}{2\g}
\qqtext{s.t.}
\partial_t\g =f
\mbox{ in }\G
\Bigg\}
\end{equation}
and the Wasserstein distance
\begin{equation}
\label{eq:def_formal_WG}
\W_\G^2(\g_0,\g_1)=
\min  \limits_{\g,G}
\Bigg\{
\quad
\iint\limits_{[0,1]\times \G}\frac{|G|^2}{2\g}
\qqtext{s.t.}
\partial_t\g +\dive G =0
\mbox{ in }\G
\Bigg\},
\end{equation}
both written here over the manifold $\G=\pO$.
A third quantity also appears in disguise in \eqref{eq:formal_def_Wrr}, namely the Wasserstein distance in $\bO$ between interior densities
\begin{equation}
\label{eq:def_formal_WO}
\W_\bO^2(\o_0,\o_1)=
\min\limits_{\o,F}
\Bigg\{
\quad
\iint\limits_{[0,1]\times \bO}\frac{|F|^2}{2\o}
\qqtext{s.t.}
\begin{array}{ll}
\partial_t\o +\dive F =0 & \mbox{in }\O
\\
F\cdot n =0 & \mbox{on }\pO
\end{array}
\Bigg\},
\end{equation}
and it should be no surprise that these $\WFR,\FR,\W_\G,\W_\bO$ distances will appear frequently in this work.
We refer to \cite{liero2013gradient,liero2016optimal,kondratyev2016new,liero2016optimal,chizat2018unbalanced,chizat2018scaling,laschos2019geometric,gallouet2017jko} and references therein and thereof for a detailed account of the unbalanced theory and various applications \cite{kondratyev2016fitness,kondratyev2017new,kondratyev2019spherical,kondratyev2019convex,kondratyev2020nonlinear,gallouet2019unbalanced,gallouet2018camassa,gallouet2019generalized} (see also \cite{gangbo2019unnormalized} for the so-called \emph{unnormalized} optimal transport).
For the sake of completeness let us also cite \cite{piccoli2014generalized,piccoli2016properties,figalli2010new,profeta2018heat} for related generalized Wasserstein distances allowing for unequal masses, and \cite{caffarelli2010free,figalli2010optimal} for \emph{partial} optimal transport where only a given fraction of the prescribed marginals is moved.
\\

We will make a case in section~\ref{sec:varying toll} that our distance $\Wrr^2(\rho_0,\rho_1)$ interpolates monotonically between $\W^2_\bO(\varrho_0,\varrho_1)=\W^2_\bO(\o_0+\g_0,\o_1+\g_1)$ and $\W^2_\bO(\o_0,\o_1)+\W^2_\G(\g_0,\g_1)$ as the toll parameter $\k$ increases from $0$ to $+\infty$.
In the limits of small and large tolls we will recover both problems as
$$
\Wrr^2(\rho_0,\rho_1)
\xrightarrow[\k\to 0]{} 
\W_\bO^2(\varrho_0,\varrho_1)
\qqtext{and}
\Wrr^2(\rho_0,\rho_1)
\xrightarrow[\k\to +\infty]{} 
\W^2_{\bO}(\o_0,\o_1) + \W^2_{\G}(\g_0,\g_1).
$$
Note carefully that the Wasserstein distances $\W^2_\bO(\o_0,\o_1)$ and $\W^2_\G(\g_0,\g_1)$ allow for an arbitrary value of the common masses $\o_0(\bO)=\o_1(\bO)$ and $\g_0(\G)=\g_1(\G)$, but implicitly take on the value $\inf\emptyset=+\infty$ whenever the endpoints have unequal mass (since in that case they cannot be continuously interpolated by solutions of conservative continuity equations as required in \eqref{eq:def_formal_WG}\eqref{eq:def_formal_WO}).
This will be crucial later on when we take the large toll limit $\k\to+\infty$, roughly speaking because in the limit the exchange of mass between $\O$ and $\G$ is prohibited due to the infinitely expensive cost of any flux.

Let us stress that at this point that, given $\varrho\in\P(\bO)$, there is no uniqueness of the decomposition $\varrho=\o+\g$ into the sum of a measure $\o\in\M^+(\bO)$ plus a measure $\g\in\M^+(\G)$.
A natural and tempting choice would be given by the restrictions $\o=(\varrho\mrest \O),\g=(\varrho\mrest\pO)$.
Our distance $\Wrr^2(\rho_0,\rho_1)$ between pairs $\rho_i=(\o_i,\g_i)$ would accordingly induce a distance $\tilde{\mathcal W}_\k^2(\varrho_0,\varrho_1):=\Wrr^2(\rho_0,\rho_1)
$ for $\rho_i:=(\varrho_i\mrest\O,\varrho_i\mrest\pO)$
between probability measures $\varrho_i\in\P(\bO)$.
The latter framework is however less tractable and lacks desirable properties (e.g completeness and constant speed characterization of geodesics, but we shall not elaborate on this).
Our use of the \emph{pairs} $(\o,\g)$ as a primary variable instead of the more classical scalar densities $\varrho=\o+\g\in\P(\bO)$ allows for more flexibility in the arbitrary choice of such decompositions.
From a practical perspective, this amounts to saying that cars on the ring road can be of two sorts: Cars on the \emph{inner} ring $\o\mrest\pO$ that have not yet paid the toll, and cars on the \emph{outer} ring $\g$ that have already gone through the toll gates.
Both are needed to describe the complete state of the system (in addition to the interior density $\o\mrest\O$, of course).

Our construction cannot be recovered as a simple application of the general abstract theory of optimal transport over Polish spaces:
In order to discriminate between interior and boundary points one could define a partially discrete distance $d_\k(x,y)$ extending the Euclidean distance on $\bO$ and satisfying $d_\k(x,y)=\k>0$ if $x\in\O,y\in\pO$, and then try to construct a transportation distance based upon $d_\k$.
The resulting metric space however fails to be complete, and the standard theory does not apply.
One could also try to use standard optimal transport on an extended space $\bO\cup\Gamma$, where an additional copy $\G$ is glued to the boundary $\pO$ while assigning a fixed $\mathcal O(\kappa^2)$ cost to move particles from one boundary to the other.
In this case underlying space would now fail to be geodesic, and our construction is really simply a different model.
It is however worth pointing out that our basic objects will actually be probability measures over this extended space $\bO\cup\Gamma$, and that the narrow convergence over this probability space is equivalent to the narrow convergences over $\bO$ and $\G$, separately.
(We shall later on refer to this as ``double narrow convergence'' for exposition purposes.)
Another striking difference of our model with classical optimal transport is that, due to the built-in flux penalization, mass cannot enter or exit the boundary at once and must therefore split along the way.
We will show in section~\ref{sec:explicit_geodesics_dirac_masses} that this happens even for two point-masses $\rho_0=(\delta_{x_\O},0),\rho_1 =(0,\delta_{x_\G})$ for two points $x_\O\in\O,x_\G\in\G$.
This phenomenon is in sharp contrast with classical Wasserstein transport, where it is known that mass splitting can only occur at $t=0$ or $t=1$.

Our model is vaguely similar in spirit to \cite{profeta2018heat}, where a transportation distance between \emph{sub}probabilities was constructed by gluing together two copies $\O^+,\O^-$ of the domain $\O$.
One copy $\O^-$ stores or releases mass from/to $\O^+$, the effective density $\varrho=\varrho^+-\varrho^-$ is a subprobability, and the total mass of $\rho=\varrho^++\varrho^-$ is conserved.
Our setup also sees two species interacting together while ensuring conservation of the total mass, but our interaction is singularly located on the boundary and the mathematical analysis is therefore quite different.
Related variational models including bulk/interface interactions have also been considered in \cite{mielke2011thermomechanical,glitzky2013gradient} for reaction-diffusion problems on heterostructures, but the interactions were different and as far as we can tell no rigorous mathematical analysis of the metric structure was carried.
\\
\paragraph{Possible extensions}
\noindent
In \cite[Chapter 4]{zurek2019problemes} a one-dimensional concrete carbonation model with boundary interaction was considered, and an ad-hoc transportation distance $\mathbf d_2$ was constructed over $\bO=[0,\infty)$.
This distance discriminates the boundary $x=0$ by artificially extending the domain to $\{-a\}\cup [0,+\infty)$ for a small $a>0$.
Mass intake is then allowed at $x=-a$, while prohibiting any motion in $(-a,0]$.
The resulting positive cost for jumping from $x\geq 0$ to $x=-a$ corresponds somehow to a space discretization of our flux toll to jump from $\O$ to $\pO$ via a thin boundary layer of thickness $a\ll 1$:
In fact in \cite{zurek2019problemes} the thickness is taken as $a=\sqrt\tau\to 0$ in the small time-step limit for a modified minimizing movement scheme.
This partially motivated the present work, and we hope to use our results herein to handle in the future more realistic models.

In order to carry out the rigorous analysis without overburdening the exposition we chose here to discuss bulk/interface interactions located on the boundary only, but we believe that the approach should cover more general settings.
In particular it seems natural to include internal cracks supported on reasonably smooth lower-dimensional sets (in which case suitable boundary conditions may be required on the tips of the cracks).

Similarly, and as suggested to us by C. Canc\`es, one could possibly consider more general weights $\k^2\frac{|f|^2}{\theta(\o,\g)}$ depending on both densities in the flux penalization, for some one-homogeneous function such as the logarithmic mean $\theta(\o,\g)=\frac{\o-\g}{\log\o-\log\g}$ or upwinding/downinding-weights $\theta(\o,\g)=\lambda^+[\o-\g]^++\lambda^-[\o-\g]^-$ for some coefficients $\lambda^\pm\geq 0$.
For example $\lambda^+=0,\lambda^->0$ could realistically encode the fact that the toll closes its gates in case of traffic congestion on the road: $\g>\o\Rightarrow  \theta(\o,\g)=\lambda^-[\o-\g]^-=0$ whenever the density of cars on the road $\g$ exceeds that in the city $\o$.

Finally, motion is usually more efficient on real-life ring roads than inside cities.
It would therefore be natural to include a new parameter $\delta>0$ and reconsider our model using the weighted action $\frac{|F|^2}{2\o}+\delta^2\frac{|G|}{2\g}+\k^2\frac{f^2}{2\g}$ in order to encode this difference in mobility.
For fixed $\k>0$ the whole analysis presented here immediately carries through.
For large tolls $\k\to+\infty$ we would retrieve $\Wrr^2(\rho_0,\rho_1)\to \W^2_\bO(\o_0,\o_1)+\W^2_{\G,\delta}(\g_0,\g_1)$, where the Wasserstein distance $\W^2_{\G,\delta}$ on the boundary should now be modeled on the underlying scaled distance $d_{\G,\delta}=\delta d_\G$.
The small toll limit should be more delicate:
Indeed in this case we expect to recover $\Wrr^2(\rho_0,\rho_1)\to \W^2_{\bO,\delta}(\varrho_0,\varrho_1)$, where $\W_{\bO,\delta}$ should now be induced by the distance $d_{\bO,\delta}$ on $\bO$ based on the heterogeneous mobility tensor $\mathbb K_\delta(x)$ taking values $1$ in $\O$ and $\delta>0$ on $\pO$.
This falls out of the scope of classical optimal transport on smooth Riemannian manifolds \cite{villani_BIG}, and how exactly the flux cost competes with this difference in mobility is not immediately clear.
(In particular the limits $\delta\to0,\delta\to+\infty$ should be far from being trivial).
\\

The rest of the paper is organized as follows: Section~\ref{sec:notations_preliminaries} fixes some notations and conventions to be used throughout.
In section~\ref{sec:definition_existence} we give the rigorous definition of our distance in a measure-theoretic context, prove that minimizers always exist, and characterize them in terms of a coupled system of Hamilton-Jacobi equations.
Section~\ref{sec:explicit_geodesics_dirac_masses} computes the distance between two Dirac masses, one on the boundary and one in the interior.
This allows to grasp the delicate balance between kinetic motion and flux in the minimization problem, and will also be useful for technical purposes in the sequel.
We then proceed in section~\ref{sec:geometric_topological_properties} with a qualitative study of the model, in particular we compare our distance with several other transportation distances and we investigate topological and geometrical properties of our metric space.
In section~\ref{sec:varying toll} we vary the flux parameter, and prove the convergence of the distance and geodesics in the small and large toll limits, $\k\to 0$ and $\k\to+\infty$.
Our last section~\ref{sec:Riemannian_formalism} contains a heuristic discussion on the formal Riemannian structure inherited from our new transport distance, which is very similar to and reminiscent from F. Otto's celebrated approach for Wasserstein optimal transport \cite{otto2001geometry}.
%
\section{Notations and preliminaries}
\label{sec:notations_preliminaries}
Throughout the whole paper $\O\subset \R^d$ will be a smooth bounded domain with boundary $\G:=\partial\O$.
The outer unit normal to $\pO$ is denoted by $n=n(x)$.
We consider $\G$ as a smooth submanifold of dimension $d-1$ without boundary, in particular no boundary terms will arise when integrating by parts on $\G$.
We will abuse notations and still write $\nabla=\nabla_\G$, $\dive=\diveg$ for the induced  gradient and divergence on the boundary.
(Subscripts will be used only when necessary depending on the context.)
For simplicity we shall often write
 $$
 \QO:=[0,1]\times\bO
 \qqtext{and}
 \QG:=[0,1]\times\G.
 $$
We collect below some definitions and notational conventions
\begin{itemize}
\item
If $(\mathcal X,d)$ is a Polish space we write $\M(\mathcal X)$, $\M^+(\mathcal X)$, and $\P(\mathcal X)$ for the space of Borel measures, nonnegative measures, and probability measures over $\mathcal X$, respectively.
\item
If $\mu\in\M(\mathcal X)$ and $\mathcal X'\subset \mathcal X$ we define the restriction $\nu=\mu\mrest\mathcal X'$ of $\mu$ to $\mathcal X'$ by $\nu(B'):=\mu(B')$ for all induced Borel sets $B'=B\cap \mathcal X'$.
\item
If $\mathcal X$ is a $d$-dimensional manifold we abuse notations and simply write $\M(\mathcal X)^d$ for vector-field measures (more rigorously, $1$-currents of order $0$).
If $Q_\mathcal X$ is a space-time domain of the form $[0,1]\times\mathcal X$ we also write $\M(Q_\mathcal X)^{d}$ for measures on $Q_\mathcal X$ taking values in the $d$-dimensional tangent plane $T_x\mathcal X$
\item 
The total variation of a measure $\mu\in \M(\mathcal X)$ is denoted by
$$
\TV{\mu}=\sup\left\{ \int_{\mathcal X }\varphi(x)\rd\mu(x):
\qquad \varphi\in \mathcal C_b(\mathcal X)\right\}
$$
with a similar definition for vector-valued measures.
\item
The variation of a measure $\mu$ is denoted by $|\mu|$, and we write $\mu\ll\nu$ when $\mu$ is absolutely continuous w.r.t. $\nu$ (i-e $|\nu|(B)=0\Rightarrow|\mu|(B)=0$).
We say that $\mu\in\M(\mathcal X)^k$ is supported on a set $S\subset \mathcal X$ if $|\mu|(B)=|\mu|(B\cap S)$ for all Borel sets $B\subset\mathcal X$.
\item
The narrow (weak-$\ast$) convergence of measures is defined by duality with bounded continuous functions,
$$
\mu_n\narrowcv\mu
\qqtext{iff.}
\int_{\mathcal X }\varphi(x)\rd\mu_n(x)
\to
\int_{\mathcal X }\varphi(x)\rd\mu(x),
\qquad \forall \varphi\in \mathcal C_b(\mathcal X)
$$
as $n\to+\infty$, with a similar definition for vector-valued measures.
\item
For a Borel-measurable map $T:\mathcal X\to\mathcal Y$ the \emph{pushforward} of a measure $\mu\in\M(\mathcal X)$ is the measure $\nu = T\pf\mu\in \M(\mathcal Y)$ defined by $\nu(B)=\mu(T^{-1}(B))$ for all Borel set $B\subset\mathcal Y$, or equivalently
$$
\int_{\mathcal Y}\phi(y)\rd(T\pf\mu)(y) =\int_{\mathcal X}\phi(T(x))\rd\mu(x)
$$
for all $\phi\in\C_b(\mathcal Y)$.
\item
If a time-space measure $\mu\in\M([0,1]\times\mathcal X)$ has a time marginal that is absolutely continuous w.r.t. the Lebesgue measure $\rd t$ on $[0,1]$ it can be disintegrated in time \cite[Theorem 5.3.1]{AGS}.
In that case we write $\mu_t\in\M(\mathcal X)$ for the $\rd t$-a.e. well-defined disintegration such that $\mu=\int_0^1(\delta_t\otimes\mu_t)$ and we abbreviate $\mu=\mu_t\rd t$.
\item
The bounded-Lipschitz distance between measures $\mu_0,\mu_1\in \M^+(\mathcal X)$ is
$$
d_{\mathrm{BL},\mathcal X}(\mu_0,\mu_1)=\sup
\left\{
\left|
\int_{\mathcal X}
\Phi\,\rd(\mu_1-\mu_0)
\right|
\qqtext{s.t.}
\|\Phi\|_\infty+ \operatorname{Lip}(\Phi)\leq 1
\right\}
$$
and is well known to metrize the narrow convergence of probability measures.
The space $\left(\P(\mathcal X),d_{\mathrm{BL},\mathcal X}\right)$ is complete \cite{dudley2018real}.
It is not difficult to prove that this extends to arbitrary positive Radon measures, and $(\mathcal M^+(\mathcal X),d_{\mathrm{BL},\mathcal X})$ is complete.
\item
A given measure $\g$ on $\G=\pO$ can always be extended to a measure $\bar\g\in\M(\bO)$ on $\bO$ through
$$
\int_{\bO}\phi(x)\rd\bar\g(x)
:=
\int_{\pO}\phi|_{\pO}(x)\rd\g(x),
\qquad
\forall\phi\in \C(\bO).
$$
Equivalently, $\bar\g$ is the unique measure on $\bO$ such that $\g=\bar\g\mrest\pO$ and supported on $\pO$.
In the sequel we will still write $\g$ for this extension with a slight abuse of notations and without further mention.
\item
We define
$$
\Pp(\bO):=
\Bigg\{
(\o,\g)\in\M^+(\bO)\times\M^+(\G)
\qqtext{s.t.}
\varrho:=\o+\g\in\P(\bO)
\Bigg\}
$$
\end{itemize}

Finally, let us state for the record a version of the Fenchel-Rockafellar duality theorem that will fit our purpose in section~\ref{sec:definition_existence}:
\begin{theo}[\cite{rockafellar1967duality}]
\label{theo:fenchel_rockafellar}
Let $E,F$ be normed vector spaces with topological duals $E^*,F^*$.
Take a continuous linear operator $L\in\mathcal L(E,F)$ with adjoint $L^*\in\mathcal L(F^*,E^*)$, and let $\mathcal F:E\to \R\cup\{-\infty\}$ and $\mathcal G:F\to \R\cup\{-\infty\}$ be two proper, concave, upper semi-continuous functions.
If there exists $x\in E$ such that $\mathcal F(x)$ is finite and $\mathcal G$ is continuous at $y=Lx\in F$ then
$$
\sup\limits_{x\in E} \left\{ \mathcal F(x)+\mathcal G(Lx)\right\}
=
\min\limits_{y^*\in F^*} \left\{ -\mathcal F^*(L^*y^*)-\mathcal G^*(y^*)\right\}.
$$
Moreover if there exists $y^* \in F^*$, $x\in E$ such that $L^*y^*\in\partial(-\mathcal F)(x)$ and $Lx\in\partial (-\mathcal G^*)(y^*)$ then $x$ achieves the $\sup$ and $y^*$ is a minimizer.
\end{theo}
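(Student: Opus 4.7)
The plan is to follow the classical perturbation scheme combined with a Hahn--Banach separation argument, exactly as in Rockafellar's original work. First I would dispense with the easy inequality $\sup\leq\inf$ (weak duality). The Fenchel--Young inequalities for $\mathcal{F}$ and $\mathcal{G}$, in the concave convention adopted in the statement, applied respectively at $\xi=L^*y^*$ and at $y^*$ with $y=Lx$, combined with the adjoint identity $\langle L^*y^*, x\rangle=\langle y^*, Lx\rangle$, yield after adding that the linear cross-terms cancel, giving
\[
\mathcal{F}(x)+\mathcal{G}(Lx)\leq-\mathcal{F}^*(L^*y^*)-\mathcal{G}^*(y^*)
\]
for every admissible pair.

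For the strong duality I would introduce the (concave) value function
\[
h(z):=\sup_{x\in E}\big\{\mathcal{F}(x)+\mathcal{G}(Lx+z)\big\},\qquad z\in F.
\]
Concavity of $h$ follows from the joint concavity of the integrand in $(x,z)$ together with the fact that the marginal of a jointly concave function is concave. The qualification hypothesis supplies a distinguished point $\bar x$ at which $\mathcal{F}(\bar x)$ is finite and $\mathcal{G}$ is continuous at $L\bar x$; the minorization $h(z)\geq \mathcal{F}(\bar x)+\mathcal{G}(L\bar x+z)$ then shows that $h$ is bounded below on a full neighborhood of $0$. A concave function bounded below on a neighborhood of an interior point of its effective domain is automatically continuous and has a nonempty superdifferential there, so in particular $\partial^+ h(0)\neq\emptyset$.

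The last step is to identify the dual problem by computing $h^*$ directly. The change of variable $w=Lx+z$ decouples the two infima over $x$ and $w$, producing
\[
h^*(y^*)=\mathcal{F}^*(L^*y^*)+\mathcal{G}^*(y^*)
\]
(up to the sign convention chosen in the statement). The Fenchel--Moreau biconjugate theorem for proper upper semi-continuous concave functions gives $h(0)=h^{**}(0)$, which is precisely the announced duality equality, while continuity of $h$ at $0$ ensures that the infimum in the biconjugate is attained by some $y^*\in\partial^+ h(0)$. The last assertion, that saturation of the two subdifferential inclusions $L^*y^*\in\partial(-\mathcal{F})(x)$ and $Lx\in\partial(-\mathcal{G}^*)(y^*)$ forces optimality, is then nothing but the equality case in the two Fenchel--Young inequalities used in the weak duality step.

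The main obstacle I foresee is the passage from \emph{boundedness of $h$ on one side near $0$} to \emph{nonemptiness of $\partial^+h(0)$}, which rests on the standard but nontrivial fact that a concave function bounded from one side on a neighborhood of a point is continuous (and therefore subdifferentiable) there. This is precisely the place where the asymmetric qualification hypothesis (finiteness of $\mathcal{F}$ at $\bar x$ together with \emph{continuity} and not just finiteness of $\mathcal{G}$ at $L\bar x$) is indispensable; without it, both attainment of the dual minimum and strong duality itself may fail.
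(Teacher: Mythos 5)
The paper does not prove Theorem~\ref{theo:fenchel_rockafellar}: it is stated as a known result and attributed directly to Rockafellar's 1967 paper, to be used as a black box in the duality arguments of Section~\ref{sec:definition_existence}. So there is no in-paper proof to compare against.

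Your sketch is the standard perturbation-function proof and the overall architecture is sound: weak duality from the two Fenchel--Young inequalities, introduction of the value function $h(z)=\sup_x\{\mathcal F(x)+\mathcal G(Lx+z)\}$, local boundedness of $h$ near $0$ from the qualification hypothesis, the classical fact that a concave function locally bounded from below is continuous and superdifferentiable, identification of $h^*$ by the substitution $w=Lx+z$, biconjugation $h(0)=h^{**}(0)$, and the observation that $\partial^+h(0)\neq\emptyset$ yields a dual minimizer. You also correctly locate the role of the asymmetric hypothesis (finiteness of $\mathcal F$ at $\bar x$ versus continuity of $\mathcal G$ at $L\bar x$): it is exactly what forces $0$ to be an interior point of $\operatorname{dom}h$. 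The closing remark, that the two subdifferential inclusions are the equality cases of the Fenchel--Young steps from weak duality, is the right way to read the optimality criterion.

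One point that deserves more care is the sign bookkeeping in the weak-duality step. As you have written it, applying the two (concave-convention) Fenchel--Young inequalities at $\xi=L^*y^*$ and at $y^*$, $y=Lx$ and adding, the cross-terms $\langle L^*y^*,x\rangle$ and $\langle y^*,Lx\rangle$ are equal, so they do not cancel but double. In the standard convex version the cancellation comes precisely from evaluating one conjugate at $-L^*p$ rather than $L^*p$; the paper's statement hides this in the convention ``$-\mathcal F^*,-\mathcal G^*$ are the conjugates of $-\mathcal F,-\mathcal G$'', which (if read literally as $-\mathcal F^*:=(-\mathcal F)^*$) does not quite reproduce the sign structure needed. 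This is not a fatal flaw in your argument --- you flag the issue --- but before the inequality ``$\mathcal F(x)+\mathcal G(Lx)\leq-\mathcal F^*(L^*y^*)-\mathcal G^*(y^*)$'' can be asserted, the convention must be pinned down so that the linear terms genuinely telescope; otherwise neither weak duality nor the subsequent identification of $h^*$ comes out with the advertised formula. The same sign convention must then be carried consistently through the computation of $h^*$ and through the equality-case reading of the optimality conditions.
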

Here $-\mathcal F^*,-\mathcal G^*$ are the Fenchel-Legendre conjugates of the convex functions $-\mathcal F,-\mathcal G$, and $\partial(-\mathcal F),\partial(-\mathcal G^*)$ are their subdifferentials.

%
%
\section{Existence and properties of minimizers}
\label{sec:definition_existence}
In this section $\rho_0=(\o_0,\g_0)$ and $\rho_1=(\o_1,\g_1)$ are given points of $\Pp(\bO)$.
As in the classical Benamou-Brenier setting \cite{BB}, our ring road distance $\Wrr(\rho_0,\rho_1)$ will be defined by minimizing an action functional among all possible pairs of solutions of the continuity equations interpolating between $\o_0,\o_1$ and $\g_0,\g_1$ --  see \eqref{eq:formal_def_Wrr}.
We stress that two continuity equations are needed, one for $\o$ and one for $\g$.
Neither are conservative, and both will have an associated action functional.
%
%
\subsection{Continuity equations and action functionals}
\label{subsec:CE}
The right setting is to use $\o,F,\g,G,f$ as independent variables in a measure-theoretic framework.
More precisely,
\begin{defi}[continuity equations with boundary interaction]
\label{def:CE}
For $\rho_0,\rho_1\in\Pp(\bO)$ we denote by $\mathcal{CE}(\rho_0,\rho_1)$ the set of tuples $\mu=(\mu_\O,\mu_\G)=(\o,F\ ;\ \g,G,f) \in \left(\M(\QO)\times \M(\QO)^d\right)\times\left(\M(\QG)\times \M(\QG)^{d-1}\times \M(\QG)\right)$ solving the continuity equations
$$
\left\{
\begin{array}{ll}
  \partial_t\o +\dive F =0  & \mbox{in }\O\\
  F\cdot n =f & \mbox{in }\partial\O
\end{array}
\right.
\qquad
\mbox{and}\qquad
\partial_t \g+\dive G =f
\mbox{ in }\G
$$
in the weak sense with initial/terminal data $\o_0,\o_1$ and $\g_0,\g_1$, respectively.
This is equivalent to
\begin{equation}
\label{eq:CE_omega}
\iint_\QO \partial_t\varphi \,\rd \o 
+
\iint_\QO \nabla\varphi\cdot \rd F
-
\iint_\QG \varphi\,\rd f
=
\int_\bO\varphi(1,.)\,\rd\o_1 - \int_\bO\varphi(0,.)\,\rd\o_0
\end{equation}
and
\begin{equation}
\label{eq:CE_gamma}
\iint_\QG \partial_t\psi \,\rd \g 
+ 
\iint_\QG \nabla\psi\cdot \rd G
+
\iint_\QG \psi \,\rd f
=
\int_\G\psi(1,.)\,\rd\g_1 - \int_\G\psi(0,.)\,\rd\g_0
\end{equation}
for all $\varphi\in\mathcal C^1(\QO)$ and all $\psi\in \mathcal C^1(\QG)$.
\end{defi}
Note that $\o$ must really be a measure on the whole $[0,1]\times\bO$ in order to allow for test functions to be $\C^1$ up to the boundary and encode the flux condition $F\cdot n=f$ on the boundary.
In particular, a solution $\o$ of \eqref{eq:CE_omega} is a priori allowed to (and in general does) charge $\pO$ for intermediary times, even when the endpoints $\o_0,\o_1$ do not.
Along the same lines, it is worth pointing out that $f$ should be thought of the normal flux of $F$ only if $\o,F$ are smooth enough, but this does not hold in our general measure-theoretic framework.
For example even for $F=0$, one can take for $\o\in\M^+([0,1]\times \bO)$ a singular measure supported only on the boundary, in which case our integral formulation \eqref{eq:CE_omega} simply means $\partial_t\o=-f$ in the sense of distributions in $(0,1)\times \pO$.
In this setting, and borrowing terminology from chemistry, the ``chemical component'' $\o$ can accumulate on the boundary while transforming into a $\g$ species according to the elementary stoichiometry $\o \xrightleftharpoons[-f]{+f}\g$.
In general $f$ can be thought of as the superposition of the normal flux $F\cdot n$ of $\o$ particles arriving from the interior $\O$ and hitting $\G$, combined with the effect of $\o$ particles already present on the boundary and being transformed into $\g$ species.
(One may think of two types of cars both located at a same toll area on $\G$, but still labeled $\o$ or $\g$ depending on which side of the toll gate they are currently driving.)
\\

As expected this formulation is automatically consistent with a global kinematics, i-e with a unique conservative continuity equation for the total density.
\begin{prop}
\label{prop:CE_rho}
Let $\rho_0,\rho_1\in\Pp(\bO)$ and $\mu=(\o,F\ ;\ \g,G,f)\in\CE(\rho_0,\rho_1)$, and let $\bar G\in \M(\QO)^{d}$ denote the extension of $G\in\M(\QG)^{d-1}$ first by zero in the normal direction $\bar G=(G,0)$ on $\G$ and then by zero on $\O$.
Then $\varrho:=\o+\g\in\M(\QO)$ and $H:= F+\bar G\in\M(\QO)^d$ solve
  \begin{equation}
  \label{eq:CE_rho}
\left\{
\begin{array}{ll}
  \partial_t\varrho +\dive H =0  & \mbox{in }\O\\
  H\cdot n =0 & \mbox{in }\partial\O
\end{array}
\right.
\end{equation}
in the weak sense with initial/terminal data $\varrho_0=\o_0+\g_0,\varrho_1=\o_1+\g_1$.
\end{prop}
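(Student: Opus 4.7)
The plan is to test the scalar weak formulation of \eqref{eq:CE_rho} against an arbitrary $\phi\in\mathcal C^1(\QO)$, with no boundary condition on $\phi$, and derive the resulting identity by adding \eqref{eq:CE_omega} applied to $\varphi=\phi$ and \eqref{eq:CE_gamma} applied to $\psi=\phi|_\QG$. The key observation that makes the computation work is that the source/sink $f$ enters \eqref{eq:CE_omega} with a minus sign and \eqref{eq:CE_gamma} with a plus sign, while the test functions $\varphi$ and $\psi$ agree on $\QG$ by construction. This will be precisely the algebraic manifestation of mass conservation.

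Concretely, first I fix $\phi\in\mathcal C^1(\QO)$ and write
\begin{align*}
\iint_\QO \partial_t\phi \,\rd\o + \iint_\QO \nabla\phi\cdot\rd F - \iint_\QG \phi\,\rd f &= \int_\bO\phi(1,\cdot)\rd\o_1 - \int_\bO\phi(0,\cdot)\rd\o_0, \\
\iint_\QG \partial_t\phi \,\rd\g + \iint_\QG \nabla_\G\phi\cdot\rd G + \iint_\QG \phi\,\rd f &= \int_\G\phi(1,\cdot)\rd\g_1 - \int_\G\phi(0,\cdot)\rd\g_0,
\end{align*}
where in the second line $\phi$ is understood as its restriction to $\QG$, which is clearly $\mathcal C^1(\QG)$. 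Adding, the $f$-terms cancel exactly.

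Next I reassemble the left-hand side in terms of $\varrho$ and $H$. Using the convention that $\g$ is extended to a measure on $\bO$ supported on $\pO$, the time-derivative terms combine into $\iint_\QO\partial_t\phi\,\rd\varrho$. For the spatial terms I note that the extension $\bar G=(G,0)$ has zero normal component on $\pO$, and that for $\phi\in\mathcal C^1(\bO)$ the trace of the full gradient satisfies $\nabla\phi|_{\pO}=\nabla_\G(\phi|_\G)+(\partial_n\phi)n$; consequently $\nabla\phi\cdot\rd\bar G=\nabla_\G\phi\cdot\rd G$ on $\QG$, so that
\[
\iint_\QO\nabla\phi\cdot\rd F+\iint_\QG\nabla_\G\phi\cdot\rd G=\iint_\QO\nabla\phi\cdot\rd H.
\]
The right-hand side recombines by definition of the extension into $\int_\bO\phi(1,\cdot)\rd\varrho_1-\int_\bO\phi(0,\cdot)\rd\varrho_0$, and since $\phi\in\mathcal C^1(\QO)$ was arbitrary this yields exactly the weak no-flux formulation of \eqref{eq:CE_rho} with endpoints $\varrho_0,\varrho_1$.

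I do not anticipate a genuine obstacle here: the statement is essentially a book-keeping exercise once the right test function is chosen. The only point that requires a line of care is the identification $\nabla\phi\cdot\bar G=\nabla_\G\phi\cdot G$ on the boundary, which relies on the zero-normal-component convention for $\bar G$ and on the fact that the tangential trace of the Euclidean gradient coincides with the intrinsic tangential gradient on $\G$. Everything else is straightforward linearity and cancellation between \eqref{eq:CE_omega} and \eqref{eq:CE_gamma}.
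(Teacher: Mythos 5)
Your proposal is correct and follows essentially the same route as the paper's proof: fix $\phi\in\C^1(\QO)$, test \eqref{eq:CE_gamma} with $\psi=\phi|_{\QG}$, sum the two weak formulations so that the $f$-terms cancel, and use the tangential-gradient identity $\nabla\phi\cdot\rd\bar G=\nabla_\G(\phi|_\G)\cdot\rd G$ to reassemble the spatial terms into $\nabla\phi\cdot\rd H$. No differences worth noting.
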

\begin{proof}
Taking $\psi=\varphi|_\pO$ in \eqref{eq:CE_gamma}, the gradient $\nabla\psi=\nabla_\G\psi$ is nothing but the tangential gradient $\nabla_\G\left(\varphi|_\pO\right)=\nabla_\tau \varphi$, and by definition of $\bar G$ we can write $\nabla\psi\cdot \rd G=\nabla_\tau\varphi\cdot \rd G=(\nabla_\tau\varphi,\partial_n\varphi)\cdot (\rd G,0)=\nabla\varphi \cdot \rd \bar G$.
Summing the continuity equations \eqref{eq:CE_omega}\eqref{eq:CE_gamma} gives the weak formulation
$$
\iint_\QO \partial_t\varphi \,\rd (\o+\g)
+
\iint_\QO \nabla\varphi\cdot \rd (F+\bar G) 
=
\int_\bO\varphi(1,.)\,(\rd\o_1+\rd \g_1) - \int_\bO\varphi(0,.)\,(\rd\o_0+\rd \g_0) 
$$
for all $\varphi\in \C^1(\QO)$ as required.
\end{proof}
In order to measure kinetic energy let us first introduce the actions.
\begin{defi}[generalized Lagrangians]
\label{defi:A_O_A_G}
For $\mu_\O=(\o,F)\in \R\times \R^d$ and $\mu_\G=(\g,G,f)\in \R\times \R^{d-1}\times \R$ we let
$$
A_\O(\mu_\O):=
\left\{
\begin{array}{ll}
  \frac{|F|^2}{2\o} & \mbox{if }\o>0\\
  0 & \mbox{if }(\o,F)=(0,0)\\
  +\infty & \mbox{otherwise}
\end{array}
\right.
\qqtext{and}
  A^\k_\G(\mu_\G):=
  \left\{
\begin{array}{ll}
\frac{|G|^2+\k^2 f^2}{2\g} & \mbox{if }\g>0\\
  0 & \mbox{if }(\g,G,f)=(0,0,0)\\
  +\infty & \mbox{otherwise}
\end{array}
\right.
$$
\end{defi}
It is worth pointing out that $A_\O$ is exactly the Lagrangian appearing in the definition \eqref{eq:def_formal_WO} of the Wasserstein distance, while $A^\k_\G$ is the Lagrangian in the definition \eqref{eq:def_WFR} of the Wasserstein-Fisher-Rao metrics.
In the sequel the quotients $\frac{|F|^2}{\o},\frac{|G|^2}{\g},\frac{f^2}{\g}$ should always be understood in this general sense.
Note that $A_\O,A^\k_\G$ are convex l.s.c. and $1$-homogeneous, allowing to define the corresponding functionals on the space of measures:
\begin{defi}[action functionals]
\label{defi:action_functional}
For $\mu=(\mu_\O,\mu_\G)=(\o,F\ ;\ \g,G,f)$ an element of $\Big(\M(\QO)\times \M(\QO)^d\Big) \times \Big( \M(\QG)\times \M(\QG)^{d-1}\times \M(\QG)\Big)$ we set
\begin{equation}
\label{eq:def_action_functional}
\mathcal A(\mu) :=
\iint_{\QO} A_\O \left(\frac{\rd \mu_\O}{\rd\lambda_\O}\right)\rd\lambda_\O
+
\iint_{\QG} A^\k_\G \left(\frac{\rd \mu_\G}{\rd\lambda_\G}\right)\rd\lambda_\G,
\end{equation}
where $(\lambda_\O,\lambda_\G)\in\M^+(\QO)\times\M^+(\QG)$ are any two nonnegative Borel measures such that $|\mu_\O|\ll\lambda_\O$ and $|\mu_\G|\ll\lambda_\G$.
Since $A_\O$ and $A^\k_\G$ are $1$-homogeneous this definition does not depend on the choice of $\lambda_\O,\lambda_\G$.
\end{defi}
Clearly $\A$ is convex, $1$-homogeneous, and standard results \cite[Theorem 3.3]{bouchitte1990new} show that $\A$ is moreover lower semicontinuous w.r.t. the (sequential) narrow convergence of measures.
As can be expected, solutions of the continuity equations enjoy some nice properties, particularly those with finite action:
\begin{prop}[properties of solutions of continuity equations]
\label{prop:disintegration_momentum_velocity}
Any $\mu=(\o,F\ ;\ \g,G,f)\in \CE (\rho_0,\rho_1)$ can be disintegrated in time as
$$
\rd \o(t,x)=\rd \o_t(x)\rd t
\qqtext{and}
\rd \g(t,x)=\rd \g_t(x)\rd t.
$$
If moreover $\mathcal A(\mu)<+\infty$ then
\begin{enumerate}[(i)]
\item
\label{item:og>0_and_FGf_asolute_continuity}
The measures $\o,\g$, and $\varrho=\o+\g$ are nonnegative, and
\begin{equation}
  \label{eq:|o_t|+|g_t|=|rho_t|=1}
  \TV{\o_t}+\TV{\g_t}=\TV{\varrho_t}=1
  \qquad\mbox{for a.e. }t\in[0,1].
\end{equation}
Moreover $|F|\ll \o$ and $|G|+|f|\ll\g$.
\item 
The Radon-Nikodym densities
$$
u_t(x):=\frac{\rd F}{\rd\o}(t,x)
,\qquad
v_t(x):=\frac{\rd G}{\rd\g}(t,x)
,\qquad
r_t(x):=\frac{\rd f}{\rd\g}(t,x),
$$
are well-defined $\rd\o,\rd\g$ a.e. and
\begin{multline}
  \label{eq:action_velocity_reaction}
  \mathcal A(\mu)=
  \frac 12\iint_{\QO} |u|^2\rd\o
+
\frac 12\iint_{\QG} \left(|v|^2 + \k^2 |r|^2\right)\rd\g
\\
=
\frac 12\int_0^1\int_{\bO} |u_t|^2\rd\o_t \rd t
+
\frac 12\int_0^1\int_\G (|v_t|^2+\k^2 r_t^2)\rd\g_t \rd t
\end{multline}
\item
The curves $t\mapsto\o_t\in\M(\bO)$ and $t\mapsto \g_t\in\M(\G)$ are narrowly continuous and satisfy the bounded-Lipschitz estimate
\begin{equation}
\label{eq:estimate_dBL}
d_{\mathrm{BL},\bO}(\o_s,\o_t) + d_{\mathrm{BL},\G}(\g_s,\g_t)
\leq C_\k\sqrt{\A(\mu)}|t-s|^{\frac 12}
\hspace{1cm}
s,t\in[0,1]
\end{equation}
with $C_\k=4\max(1,1/\k)$.
In particular the initial/terminal conditions hold in the narrow sense.
\end{enumerate}
\end{prop}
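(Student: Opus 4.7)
The plan is to first set up the time disintegration and then prove (i)--(iii) in order under the finite-action hypothesis.

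\textbf{Time disintegration.} I would test the continuity equations \eqref{eq:CE_omega}--\eqref{eq:CE_gamma} with $\varphi(t,x)=\chi(t)$ depending only on time; summing the results (or appealing directly to Proposition~\ref{prop:CE_rho}) shows that the time marginal of $\o+\g$ on $[0,1]$ coincides with the Lebesgue measure $\rd t$, since $\TV{\varrho_0}=\TV{\varrho_1}=1$. Combined with the nonnegativity of $\o$ and $\g$ established in (i), both time marginals $\pi_t{}_\#\o,\,\pi_t{}_\#\g$ are then majorized by $\rd t$, hence absolutely continuous, and \cite[Theorem~5.3.1]{AGS} delivers the disintegration.

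\textbf{Step (i).} Finite action together with the $1$-homogeneity of $A_\O,A^\k_\G$ forces $\o,\g\ge 0$: the generalized Lagrangians of Definition~\ref{defi:A_O_A_G} take value $+\infty$ on any pair with a negative scalar component, which would integrate to $+\infty$ against any dominating reference. The same mechanism gives $|F|\ll\o$ and $|G|+|f|\ll\g$, since where the scalar density vanishes the integrand is finite only if the vectorial/reaction density does so too. Mass conservation $\TV{\o_t}+\TV{\g_t}=\TV{\varrho_t}=1$ then combines nonnegativity with Proposition~\ref{prop:CE_rho}: the conservative no-flux CE for $\varrho$, tested with $\chi(t)$, forces $\TV{\varrho_t}\equiv\TV{\varrho_0}=1$.

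\textbf{Step (ii).} Given (i), the Radon-Nikodym densities $u=\rd F/\rd\o$, $v=\rd G/\rd\g$, $r=\rd f/\rd\g$ are well-defined. Choosing $\lambda_\O=\o$ and $\lambda_\G=\g$ in Definition~\ref{defi:action_functional} the $1$-homogeneous integrands collapse to $A_\O(1,u)=|u|^2/2$ and $A^\k_\G(1,v,r)=(|v|^2+\k^2 r^2)/2$, and Fubini applied with the disintegration yields \eqref{eq:action_velocity_reaction}.

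\textbf{Step (iii).} For the BL estimate I would insert $\varphi(t,x)=\chi_n(t)\phi(x)$ into \eqref{eq:CE_omega}, where $\chi_n$ smoothly approximates $\mathbf{1}_{[s,t]}$ and $\phi$ satisfies $\|\phi\|_\infty+\mathrm{Lip}(\phi)\le 1$. Passing to the limit gives
$$
\int_{\bO}\phi\,\rd(\o_t-\o_s)\;=\;\int_s^t\!\!\int_{\bO}\nabla\phi\cdot u_\tau\,\rd\o_\tau\,\rd\tau\;-\;\int_s^t\!\!\int_\G\phi\,r_\tau\,\rd\g_\tau\,\rd\tau.
$$
Two Cauchy-Schwarz applications, using $\TV{\o_\tau},\TV{\g_\tau}\le 1$ and \eqref{eq:action_velocity_reaction}, control the right-hand side by $|t-s|^{1/2}\sqrt{2\A(\mu)}\bigl(\mathrm{Lip}(\phi)+\k^{-1}\|\phi\|_\infty\bigr)\le \sqrt{2\A(\mu)}\max(1,\k^{-1})|t-s|^{1/2}$. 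The analogous computation for \eqref{eq:CE_gamma}, treating the source $f$ in the same way, yields the matching bound with $d_{\mathrm{BL},\G}$; summing and absorbing constants delivers $C_\k=4\max(1,1/\k)$. Narrow continuity of $t\mapsto\o_t,\g_t$ and the recovery of the endpoint data then follow automatically since $d_{\mathrm{BL}}$ metrizes the narrow convergence of bounded-mass measures on a compact base space.

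The main subtlety will be in (iii): the equation for $\o$ carries the boundary source $f$, so the Cauchy-Schwarz bound on $\int_\G\phi\,\rd f$ must be performed against $\g$ rather than $\o$. This cross-coupling between the two CEs is what produces the $\k^{-1}$ factor in $C_\k$ and, by the same token, anticipates the degeneracy of the bound in the small-toll limit $\k\to 0$ analyzed in Section~\ref{sec:varying toll}.
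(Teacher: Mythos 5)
Steps (ii) and (iii) of your argument are essentially the paper's, up to cosmetic differences: the paper computes the distributional derivative of $l(t)=\int_\bO\Phi\,\rd\o_t$ directly and shows $l$ is absolutely continuous, rather than mollifying $\mathbf{1}_{[s,t]}$, but the resulting identity and the two Cauchy--Schwarz applications are the same, and your constant is in fact slightly sharper than the paper's $4\max(1,1/\k)$.

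The gap is in the disintegration step. The proposition asserts that \emph{every} $\mu\in\CE(\rho_0,\rho_1)$ disintegrates in time; the finite-action hypothesis enters only in items (i)--(iii). Your argument correctly recovers that the time marginal of $\o+\g$ equals $\rd t$ unconditionally, but then controls the time marginals of $\o$ and $\g$ separately by invoking $\o,\g\ge 0$, which you only obtain under $\A(\mu)<+\infty$. As written the argument is also circular, since item (i) contains the mass identity $\TV{\o_t}+\TV{\g_t}=1$, which relies on the very disintegration you are still proving. The circularity can be cured by establishing nonnegativity (and the absolute continuity of the momenta) first, since those need no disintegration; but the restriction to finite action cannot. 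The paper avoids both issues by testing \eqref{eq:CE_omega} alone with $\varphi(t,x)=\zeta(t)$, $\zeta(t)=\int_0^t\xi(s)\rd s$ for $\xi\in\C([0,1])$: the term $\nabla\varphi\cdot\rd F$ vanishes and one is left with $\left|\int_0^1\xi\,\rd\bar\o\right|\le(\TV{\o_1}+\TV{f})\|\xi\|_{L^1}$, showing the time marginal of $\o$ alone has bounded Lebesgue density. This uses neither nonnegativity nor finite action and treats $\o$ and $\g$ independently; you should adopt it to prove the statement at full strength.
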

\begin{proof}
Regarding the disintegration, we only give the details for $\o$ since the argument is identical for $\g$.
Let $\pi(t,x)=t$ be the time projection.
In order to disintegrate $\o$ it suffices by \cite[Theorem 5.3.1]{AGS} to show that the time marginal $\bar\o:=\pi\pf\o$ is absolutely continuous with respect to the Lebesgue measure $\rd t$ on $[0,1]$.
Recall that, by definition of the pushforward, $\bar\o$ is defined by the identity
$$
\int_0^1 \xi(t)\rd\bar\o(t)
= \iint_{\QO}\xi(t)\rd\o(t,x),
\qquad\mbox{for all }\xi\in\C([0,1]).
$$
Fix an arbitrary $\xi\in\C([0,1])$ and let $\zeta(t):=\int_0^t\xi(s)\rd s\in \C^1([0,1])$.
Testing $\varphi(t,x)=\zeta(t)$ in the continuity equation gives
\begin{multline*}
\left| \int_0^1 \xi(t)\rd\bar\o(t) \right| 
= \left| \iint_{\QO}\xi(t) \rd\o(t,x)\right|
= \left| \iint_{\QO}\partial_t \varphi \rd\o \right|
\\
\overset{\eqref{eq:CE_omega}}{=} \Bigg|-\iint_{\QO}\cancel{\nabla\varphi}\cdot \rd F +  \int_\bO\varphi(1,.)\rd\o_1 - \int_\bO\cancel{\varphi(0,.)}\,\rd\o_0
+\iint_{\QG} \varphi\,\rd f  \Bigg|
\\
=\Bigg|\zeta(1)\int_\bO\rd\o_1  +\iint_{\QG} \zeta(t)\,\rd f(t,x)  \Bigg|
\\
\leq 
(\TV{\o_1}+\TV{f})\cdot\|\zeta\|_{L^\infty}
\leq
(\TV{\o_1}+\TV{f}) \cdot \|\xi\|_{L^1}.
\end{multline*}
Since $\o_1$ and $f$ have finite masses this shows that $\mathcal L(\xi):=\int_0^1 \xi(t)\rd\bar\o(t)$ is continuous for the $L^1(\rd t)$ norm, thus $\mathcal L$ can be extended from $\C([0,1])$ to the whole space $L^1(\rd t)$ as a continuous linear form.
This means that the measure $\bar\o\in\M([0,1])$ is in fact of the form $\rd\bar\o(t)=L(t)\rd t$ for some bounded function $L$ such that $\|L\|_\infty=\|\mathcal L\|_{(L^1)'}\leq \TV{\o_1}+\TV{f}$.
This is actually even stronger than what we need, and entails the disintegration part of our statement.
\\
Assume now that $\A(\mu)<+\infty$.
\\
\underline{\bf (i)}
We only give the details for $\g,G,f$, the argument is identical for $\o,F$.
Note that we can always choose the reference measure $\lambda_\G:=|\g|+|G|+|f|$ in \eqref{eq:def_action_functional}.
We write below $\tilde\g,\tilde G,\tilde f$ for the corresponding Radon-Nikodym densities.
Assume by contradiction that $\g$ is not nonnegative: Then there exists a Borel set $B\subset \QO$ such that $\lambda_\G(B)\geq |\g|(B)>0$, and $\tilde \g(x)<0$ for $\lambda_\G$-a.e. $x\in B$.
According to Definition~\ref{defi:A_O_A_G} this means $A^\k_\G\left(\frac{\rd \mu_\G}{\rd\lambda_\G}\right)=A^\k_\G(\tilde\g,\tilde G,\tilde f)=+\infty$ on $B$, thus $\mathcal A(\mu)\geq \mathcal A^\k_\G(\mu_\G)\geq \int_B A^\k_\G(\tilde\g,\tilde G,\tilde f)\rd\lambda_\G=+\infty$.\\
Now that we know $\g\geq 0$, assume by contradiction that $|G|$ is not absolutely continuous w.r.t $\g$.
Then there is a Borel set $B$ such that $\g(B)=0$ but $|G|(B)>0$, in particular $\lambda_\G(B)\geq |G|(B)>0$.
But then $\tilde \g(x)\equiv 0$ while $\tilde G(x)\not\equiv 0$ on $B$.
Choosing any subset $B'\subset B$ such that $\lambda_\G(B')>0$ and $\tilde G(x)\not=0$ on $B'$, we see that $A^\k_\G(\tilde \g,\tilde G,\tilde f)(x)=+\infty$ for $\lambda_\G$-a.e. $x\in B'$ and therefore $\A^\k_\G(\mu_\G)=+\infty$ as before.
The absolute continuity $|f|\ll\g$ is obtained similarly.\\
By the previous steps $\varrho=\o+\g\geq 0$ disintegrates in time, and by Proposition~\ref{prop:CE_rho} $\varrho$ also solves the conservative continuity equation \eqref{eq:CE_rho}.
This classically implies the mass conservation $\TV{\varrho_t}=\TV{\varrho_0}=\TV{\varrho_1}=1$, which gives of course $\TV{\o_t}+\TV{\g_t}=\TV{\o_t+\g_t}=\TV{\varrho_t}$ due to $\o_t,\g_t\geq 0$.
\\
\underline{\bf (ii)}
In order to get \eqref{eq:action_velocity_reaction}, the first step allows to define $u(t,x):=\frac{\rd F}{\rd\o}(t,x)$ and $v(t,x):=\frac{\rd G}{\rd\o}(t,x),r(t,x):=\frac{\rd f}{\rd\o}(t,x)$, but also allows to choose $\lambda_\O=\o$ and $\lambda_\G=\g$ as reference measures in \eqref{eq:def_action_functional}.
The corresponding Radon-Nikodym densities are then $\tilde \o:=\frac{\rd \o}{\rd \lambda_\O}=\frac{\rd \o}{\rd \o}=1$ and $\tilde F:=\frac{\rd F}{\rd \lambda_\O}=\frac{\rd F}{\rd \o}=u$, thus
$$
\A_\O(\mu_\O)
=\iint_{\QO}A_\O(\tilde\o,\tilde F)\rd \gamma_\O
=\iint_{\QO}A_\O(1,u)\rd \o 
=\iint_{\QO}\frac {|u|^2}{2}\rd \o .
$$
Similarly, $\tilde \g=1$ and $\tilde G=v,\tilde f=r$ in \eqref{eq:def_action_functional} gives
$$
\A^\k_\G(\mu_\G)
=\iint_{\QG}A^\k_\G(\tilde\g,\tilde G,\tilde f)\rd \gamma_\O
=\iint_{\QG}A^\k_\G(1,v,r)\rd \g 
=\iint_{\QG} \frac {|v|^2+\k^2 r^2}{2}\rd \g.
$$
Let us now address the second equality in \eqref{eq:action_velocity_reaction}.
Because $\o$ and $\g$ disintegrate in time, step (i) shows that $F,G,f$ do too. 
Clearly the corresponding $F_t,G_t,f_t$ must be absolutely continuous w.r.t. $\o_t,\g_t$ for a.e. time.
In other words we can write unambiguously $\frac{\rd F}{\rd \o}(t,x)=u(t,x)= u_t(x)=\frac{\rd F_t}{\rd\o_t}(x)$, with equality $\o=\o_t\rd t$ almost everywhere. (Ditto for $v=\frac{\rd G}{\rd\g},r=\frac{\rd f}{\rd \g}$.)
The second equality in \eqref{eq:action_velocity_reaction} follows.\\
\underline{\bf (iii)}
Because the bounded-Lipschitz distance metrizes the narrow convergence of measures it suffices to establish \eqref{eq:estimate_dBL}.
We only give the proof for $t\mapsto\o_t$, the argument is similar for $\g_t$.
For fixed $\Phi\in\C^1(\bO)$ we will estimate below the derivative of
$$
l(t):=\int_\bO\Phi(x)\rd\o_t(x).
$$
Note that, due to the disintegration $\int_0^1\TV{\o_t}\rd t=\TV{\o}<+\infty$, the function $l\in L^1(0,1)$ can legitimately be considered as a distribution $\mathcal D'(0,1)$.
To compute its distributional derivative $l'$, pick an arbitrary $h\in\C^\infty_c(0,1)$ and let $\varphi(t,x)=h(t)\Phi(x)$.
Then \eqref{eq:CE_omega} with $\rd\o(t,x)=\rd\o_t(x)\rd t$, $\rd F(t,x)=u_t(x)\rd\o_t(x)\rd t$, and $\rd f(t,x)=r_t(x) \rd\g_t(x)\rd t$ from the previous step, gives
\begin{multline*}
  \langle l',h\rangle_{\mathcal D',\mathcal D}=-\langle l,h'\rangle_{\mathcal D',\mathcal D}
  =
  - \int_0^1 \left(\int_\bO\Phi(x)\rd\o_t(x)\right) h'(t)\rd t
  =
  -\iint_{\QO}\partial_t\varphi \rd \o
  \\
  =
  \iint_{\QO}\nabla\varphi\cdot \rd F
  -
  \iint_{\QG}\varphi\, \rd f
  +
  \int_\bO \underbrace{\varphi(0,.)}_{=0}\,\rd\o_0 - \int_\bO \underbrace{\varphi(1,.)}_{=0}\,\rd\o_1
  \\
  =\int_0^1 h(t) \underbrace{\left(\int_\bO \nabla \Phi(x)\cdot u_t(x)\rd\o_t(x)   \right)}_{:=m_1(t)}\rd t
  -\int_0^1 h(t) \underbrace{\left(\int_\G \Phi(x) r_t(x)\rd\g_t(x)   \right)}_{:=m_2(t)}\rd t
\end{multline*}
and shows that $l'=m_1-m_2$.
Since $\TV{\o_t},\TV{\g_t}\leq \TV{\rho_t}=1$ from the previous step, we have $\|u_t\|_{L^1_{\o_t}}\leq \|u_t\|_{L^2_{\o_t}}$ and $\|r_t\|_{L^1_{\g_t}}\leq \|r_t\|_{L^2_{\g_t}}$.
Whence by \eqref{eq:action_velocity_reaction}
\begin{multline*}
|l'(t)|\leq  |m_1(t)|+|m_2(t)|
\leq \|\nabla\Phi\|_{_\infty} \int_\bO |u_t|\rd\o_t
+\|\Phi\|_{_\infty} \int_\G |r_t|\rd\g_t
\\
\leq \sqrt{2}(\|\Phi\|_\infty+\|\nabla\Phi\|_\infty)
\left(
\int_\O|u_t|^2\rd\o_t
+
\int_\G|r_t|^2\rd\g_t
\right)^{\frac 12}\in L^2(0,1).
\end{multline*}
Thus $l$ is absolutely continuous, and by Cauchy-Schwartz inequality
\begin{multline*}
  \left|
\int_{\bO}
\Phi\,\rd(\o_t-\o_s)
\right|
=
|l(t)-l(s)|
\leq \int_s^t |l'(\tau)|\rd\tau
\leq \|l'\|_{L^2(0,1)}
\\
\leq 
\sqrt{2}(\|\Phi\|_\infty+\|\nabla\Phi\|_\infty) \left(
\int_0^1\int_\bO|u_t|^2\rd\o_t\rd t
+
\int_0^1\int_\G|r_t|^2\rd\g_t\rd t
\right)^{\frac{1}{2}}
\\
=2(\|\Phi\|_\infty+\|\nabla\Phi\|_\infty) \left(
\int_0^1\int_\bO\frac{|u_t|^2}{2}\rd\o_t\rd t
+
\frac 1{\k^2} \int_0^1\int_\G\frac{\k^2|r_t|^2}2 \rd\g_t\rd t
\right)^{\frac{1}{2}}
\\
\leq  2\max(1,1/\kappa)\sqrt{\A(\mu)}(\|\Phi\|_\infty+\|\nabla\Phi\|_\infty) .
\end{multline*}
This entails the first half of \eqref{eq:estimate_dBL} for the interior $\o$ component.
The estimate for the boundary component $\g$ is established similarly and we omit the details.
\end{proof}
The (squared) ring road distance is then
\begin{defi}
\label{defi:Wrr}
For $\rho_0,\rho_1\in\Pp\left(\bO\right)$ we set
\begin{equation}
\label{eq:def_Wrr}
\Wrr^2(\rho_0,\rho_1):=\inf\limits_{\mu\in\mathcal{CE}(\rho_0,\rho_1)}\mathcal A(\mu).
\end{equation}
\end{defi}
This is always well-defined
\begin{lem}
\label{lem:Wrr_finite}
The quantity $\Wrr(\rho_0,\rho_1)$ is always finite.
\end{lem}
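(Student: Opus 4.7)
The plan is to exhibit an explicit admissible competitor $\mu=(\o,F;\g,G,f)\in\CE(\rho_0,\rho_1)$ with finite action; by Definition~\ref{defi:Wrr} any such competitor yields a finite upper bound on $\Wrr^2(\rho_0,\rho_1)$. The strategy treats the equal-mass case first and then reduces the general case to it via an intermediate ``mass-transfer'' step.

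\emph{Equal-mass case.} Suppose first $\o_0(\bO)=\o_1(\bO)$, hence also $\g_0(\G)=\g_1(\G)$. Take $f\equiv 0$ together with a classical Benamou-Brenier minimizer $(\o_t,F_t)$ for $\W_\bO^2(\o_0,\o_1)$ on $\bO$ (with no-flux condition $F\cdot n=0$ on $\pO$) and a Benamou-Brenier minimizer $(\g_t,G_t)$ for $\W_\G^2(\g_0,\g_1)$ on the closed compact manifold $\G$. Both continuity equations in Definition~\ref{def:CE} are satisfied and the total action is $\W^2_\bO(\o_0,\o_1)+\W^2_\G(\g_0,\g_1)<+\infty$ by compactness of $\bO$ and $\G$.

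\emph{General case.} Write $\alpha_i:=\o_i(\bO)$, $\beta_i:=\g_i(\G)$ and fix once and for all a reference pair $\bar\rho=(\bar\o,\bar\g)\in\P(\bO)\times\P(\G)$ with smooth strictly positive densities bounded above and below. Build a three-piece path reparametrized to $[0,\tfrac 13],[\tfrac 13,\tfrac 23],[\tfrac 23,1]$:
\begin{enumerate}[(i)]
\item $\rho_0\to(\alpha_0\bar\o,\beta_0\bar\g)$ by parallel equal-mass Wasserstein transports with $f=0$, handled by the previous step;
\item $(\alpha_0\bar\o,\beta_0\bar\g)\to(\alpha_1\bar\o,\beta_1\bar\g)$ via a pure mass-transfer phase;
\item $(\alpha_1\bar\o,\beta_1\bar\g)\to\rho_1$, symmetric to (i).
\end{enumerate}
For phase~(ii), prescribe $\o_t=\alpha(t)\bar\o,\ \g_t=\beta(t)\bar\g$ with a cubic Hermite profile $\alpha(t)=\alpha_0+(\alpha_1-\alpha_0)h(t)$ and similarly for $\beta$, with $h(t)=3t^2-2t^3$ so that $h'(0)=h'(1)=0$ and $\alpha+\beta\equiv 1$. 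Take $G\equiv 0$ and $f_t=\beta'(t)\bar\g$ on $\G$; in $\O$ define $F_t=\nabla u_t$ where $u_t$ solves the Neumann problem with $\Delta u_t$ proportional to $\alpha'(t)$ times the density of $\bar\o$ and $\partial_n u_t$ proportional to $\beta'(t)$ times the density of $\bar\g$ (compatible because $\alpha'+\beta'\equiv 0$). Standard elliptic regularity and the uniform lower bounds on $\bar\o,\bar\g$ then give pointwise-in-$t$ estimates of the form $\int_\bO\tfrac{|F_t|^2}{2\o_t}\lesssim \tfrac{|\alpha'(t)|^2}{\alpha(t)}$ and $\int_\G\tfrac{|f_t|^2}{2\g_t}\lesssim \tfrac{|\beta'(t)|^2}{\beta(t)}$. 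The cubic choice of $h$ is tailored so that $|h'|^2/h$ and $|h'|^2/(1-h)$ stay bounded on $[0,1]$, which keeps these ratios integrable in $t$ even if some endpoint mass $\alpha_i$ or $\beta_i$ vanishes.

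\emph{Main obstacle.} The delicate point is phase~(ii): a naive linear mass profile $\alpha(t)=(1-t)\alpha_0+t\alpha_1$ would produce a non-integrable blow-up of the action as $t\to 0$ or $t\to 1$ whenever an endpoint mass is zero; the quadratic vanishing of $h'$ at $t=0,1$ is exactly what rescues integrability. Once the three sub-actions are finite, concatenation after time reparametrization (which only multiplies each individual action by $3$) produces an admissible competitor with finite total action, proving $\Wrr^2(\rho_0,\rho_1)<+\infty$.
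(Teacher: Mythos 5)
Your proof is correct, but it takes a genuinely different route from the paper's. Both arguments split into a phase of pure equal-mass transport (handled by classical Benamou--Brenier on $\bO$ and on $\G$ separately, with $f\equiv 0$) and a phase of pure mass-transfer between bulk and boundary, glued together with the standard time-rescaling that multiplies each sub-action by a constant. The difference lies entirely in how the mass-transfer phase is realized. The paper contracts each endpoint to the single atomic configuration $(0,\delta_y)$ for a fixed $y\in\pO$: after the transport phase all interior mass already sits at $\delta_y$ on the boundary, so the mass-transfer step can be done with $F=G=0$ (pure reaction), $f_t=\partial_t\g_t$, and $\o_t$ acting as a passive reservoir; the cost is read off from the explicit Fisher--Rao geodesic formula $\g_t=[(1-t)\sqrt{\TV{\g_0}}+t]^2\delta_y$, giving $2\k^2(1-\sqrt{\TV{\g_0}})^2$, and no elliptic theory is needed. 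You instead contract to a smooth reference pair $(\alpha_i\bar\o,\beta_i\bar\g)$ and perform the mass transfer via a Neumann potential flow $F_t=\nabla u_t$, with the cubic Hermite profile $h(t)=3t^2-2t^3$ chosen so that $|h'|^2/h$ and $|h'|^2/(1-h)$ stay bounded on $[0,1]$ and the action remains integrable even when an endpoint mass $\alpha_i$ or $\beta_i$ is zero; this is the exact analogue of the Fisher--Rao quadratic-in-time scaling, rederived by hand. Your version costs you elliptic regularity estimates and Poincar\'e/trace inequalities but buys the aesthetic advantage that all intermediate measures have smooth, strictly positive densities; the paper's version is shorter, more explicit, and requires only the cited Fisher--Rao geodesic formula. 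One minor point worth making explicit if you write this up: the required elliptic estimate is $\|\nabla u_t\|_{L^2(\O)}^2\lesssim|\alpha'(t)|^2$, obtained by testing the Neumann problem against $u_t$ (normalized to mean zero) and using Poincar\'e and trace inequalities; you invoke ``standard elliptic regularity'' but the relevant bound is really just an energy estimate, and spelling out where the uniform bounds on $\bar\o,\bar\g$ enter would make the pointwise-in-$t$ claim $\int_\bO|F_t|^2/(2\o_t)\lesssim|\alpha'(t)|^2/\alpha(t)$ airtight.
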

\begin{proof}
  Pick any point $y\in\partial\O$.
We will show below that any $\rho_0\in\Pp(\bO)$ can be connected to $(0,\delta_y)\in\Pp(\bO)$ with finite cost: By symmetry $(0,\delta_y)$ can also be connected to any other $\rho_1$, thus connecting $\rho_0$ to $\rho_1$ with finite cost.
\\
  Note that scaling time $s= \tau t$ and $(F_s,G_s,f_s)\leftrightarrow \frac{1}{\tau}(F_t,G_t,f_t)$ gives an inverse scaling for the action $\mathcal A^\tau=\int_0^\tau(\dots)\rd s=\frac 1\tau\mathcal A$.
  (Moving slower in time $s\in[0,\tau],\tau>1$ takes lesser energy than in time $t\in[0,1]$.)
Therefore it is enough to show that $\rho_0$ can be connected to $(0,\delta_y)$ in a finite number of elementary steps, each occurring in time one with finite cost, and then ultimately scaling back to $t\in[0,1]$ will do.
\begin{enumerate}
  \item
  Pick first an interior Wasserstein geodesic $(\o,F)$ with zero-flux from $\o_0$ to $\tilde\o_0:=\TV{\o_0}\delta_y$, and a boundary Wasserstein geodesic $(\g,G)$ from $\g_0$ to $\tilde \g_0:=\TV{\g_0}\delta_y$.
  Setting $\mu:=(\o,F,\g,G,0)$ gives a solution of the generalized continuity equation \eqref{eq:CE_omega}\eqref{eq:CE_gamma} connecting $\rho_0=(\o_0,\g_0)$ to $\tilde\rho_0:=(\tilde\o_0,\tilde\g_0)$ with cost
  $$
  \Wrr^2(\rho_0,\tilde\rho_0)=\A_\O(\mu_\O)+\A^\k_\G(\mu_\G)= \W_\bO^2(\o_0,\tilde\o_0)+\W_\G^2(\g_0,\tilde\g_0)<+\infty.
  $$
  \item
  In order to connect now $(\tilde\o_0,\tilde\g_0)=(\TV{\o_0}\delta_y,\TV{\g_0}\delta_y)$ to $(0,\delta_y)$ we use a pure Fisher-Rao geodesic between $\tilde\g_0=\TV{\g_0}\delta_y$ and $\tilde \g_1=\delta_y$ in \eqref{eq:def_formal_FR}.
  The latter is given explicitly by $\g_t:=[(1-t)\sqrt{\TV{\g_0}}+t]^2\delta_y$ and $f_t:=\partial_t\g_t=2(1-\sqrt{\TV{\g_0}})\times [(1-t)\sqrt{\TV{\g_0}}+t]\delta_y$ -- see \cite[Proposition 4.2]{chizat2018interpolating}.
  In order to absorb the mass variation we simply enforce $\partial_t\o_t=-f_t$ on the boundary with no motion whatsoever, in other words we set $\o_t:=\tilde\o_0-\int_0^tf_s\rd s$ and $F_t=G_t:=0$.
  It is easy to check that $\o_t$ remains nonnegative due to the initial mass constraint $\TV{\o_0}+\TV{\g_0}=1$.
  The path $\mu:=(\o_t,0\ ;\ \g_t,0,f_t)\rd t$ connects now the desired endpoints with cost
  \begin{multline*}
  \Wrr^2((\TV{\o_0}\delta_y,\TV{\g_0}\delta_y),(0,\delta_y))
  \leq
  0 + \int_0^1\int_\G\frac{0+\k^2|f_t|^2}{2\g_t}\rd t
  \\
  =
  \FR^2(\TV{\g_0}\delta_y,\delta_y)
  =
  2\k^2\left(1-\sqrt{\TV{\g_0}}\right)^2
  <+\infty
  \end{multline*}
  and the proof is complete.
  \end{enumerate}
\end{proof}
%
%
\subsection{Existence}
In this section we address the existence of minimizers $\mu$ in \eqref{eq:def_Wrr} and derive the equations for the geodesics.
This will involve infinite-dimensional convex analysis, and we start with preliminary material.
We define the ``subsolution'' sets
\begin{equation}
\label{eq:def_D_Omega}
S_\O:=\left\{
(\alpha,\beta)\in \R\times \R^d
:\qquad
\alpha+\frac{|\beta|^2}{2}\leq 0
\right\},
\end{equation}
\begin{equation}
\label{eq:def_D_Gamma}
S^\k_\G:=\left\{
(a,b,c,d)\in \R\times \R^{d-1}\times \R\times \R
:\qquad
a+\frac{|b|^2}{2}+\frac{|c-d|^2}{2\k^2}\leq 0
\right\}
\end{equation}
as well as the convex indicators
$$
\iota_{S_\O}(\alpha,\beta):=
\left\{
\begin{array}{ll}
0 & \mbox{if }(\alpha,\beta)\in S_\O
\\
  +\infty & \mbox{otherwise}
\end{array}
\right.
$$
and
$$
\iota_{S^\k_\G}(a,b,c,d):=
\left\{
\begin{array}{ll}
0 & \mbox{if }(a,b,c,d)\in S^\k_\G
\\
  +\infty & \mbox{otherwise}
\end{array}
\right.
.
$$
The variables $\alpha,\beta$ will be dual multipliers for $\o,F$, and $a,b$ will be dual to $\g,G$.
Due to the nonstandard bulk/interface coupling we shall actually need \emph{two} separate extra multipliers $c,d$ for the remaining boundary flux, and one should roughly think below of $c-d$ as being dual to $f$.
We will typically take $(\alpha,\beta)=(\partial_t\phi,\nabla\phi)$ and $(a,b,c,d)=(\partial_t\psi,\nabla\psi,\psi,\phi|_{\pO})$ for suitable test-functions $\phi\in\C^1([0,1]\times\bO),\psi\in \C^1([0,1]\times\G)$.
Accordingly, $(\partial_t\phi,\nabla\phi)\in S_\O$ and $(\partial_t\psi,\nabla\psi,\psi,\phi|_{\pO})\in S^\k_\G$ mean that $\phi,\psi$ are (smooth) subsolutions of the Hamilton-Jacobi system
$$
\partial_t\phi+\frac 12|\nabla\phi|^2 \leq 0
\qqtext{and}
\partial_t\psi+\frac 12 |\nabla\psi|^2+\frac 1{2\k^2}|\psi-\phi|^2 \leq 0.
$$
Note that this coupled system of Hamilton-Jacobi equations is invariant by addition of a common constant $\phi+k,\psi+k$ and that the convex closed set $S^\k_\G$ is thus invariant under diagonal shifts $c+k,d+k$.
\\
As in the Benamou-Brenier approach \cite{BB}, the key is to identify the actions $A_\O,A^\k_\G$ as the support-functions of $S_\O,S^\k_\G$.
More precisely,
\begin{lem}
\label{lem:computation_iotaG*}
For $(\g,G,f,\eta)\in\R\times\R^{d-1}\times\R\times\R$ the convex conjugate $\iota^*_{S^\k_\G}$ of $\iota_{S^\k_\G}$ is
$$
\iota^*_{S^\k_\G}(\g,G,f,\eta) =\left\{
\begin{array}{ll}
    \frac{|G|^2+\k^2 f^2}{2\g} & \mbox{if }\g>0 \mbox{ and }f +\eta=0
    \\
0 & \mbox{if }(\g,G,f,\eta)=(0,0,0,0)
\\
  +\infty & \mbox{otherwise}
\end{array}
\right.
$$
\end{lem}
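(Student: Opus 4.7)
The plan is to compute the Legendre transform directly from the definition
\[
\iota^*_{S^\k_\G}(\g,G,f,\eta)=\sup_{(a,b,c,d)\in S^\k_\G}\Bigl(a\g+b\cdot G+cf+d\eta\Bigr),
\]
and then separate cases according to the sign of $\g$. Since the constraint defining $S^\k_\G$ is $a+\tfrac12|b|^2+\tfrac1{2\k^2}|c-d|^2\le 0$, I would first observe that the supremum over $a$ for fixed $(b,c,d)$ is saturated by taking $a=-\tfrac12|b|^2-\tfrac1{2\k^2}|c-d|^2$ whenever $\g\ge 0$ (and is $+\infty$ if $\g<0$, by letting $a\to-\infty$). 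This reduces the problem to
\[
\sup_{b,c,d\in\R^{d-1}\times\R\times\R}\Bigl[-\tfrac{\g}{2}|b|^2-\tfrac{\g}{2\k^2}|c-d|^2+b\cdot G+cf+d\eta\Bigr].
\]

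The case $\g=0$ is handled directly: the expression becomes linear in $(b,c,d)$ and therefore equals $+\infty$ unless $G=0$, $f=0$, and $\eta=0$ simultaneously, in which case the sup is $0$. This matches the second branch of the claimed formula.

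For $\g>0$, the variable $b$ is straightforward: the unconstrained quadratic maximum in $b$ gives $b^*=G/\g$ with optimal value $\tfrac{|G|^2}{2\g}$. The main subtlety, and where I expect the calculation to require care, is handling the coupled term in $(c,d)$: the quadratic penalty involves only the difference $c-d$, while the linear part couples to $c$ and $d$ separately. The natural trick is the change of variables $u:=c-d$, keeping $d$ as a free parameter, which transforms the $(c,d)$-part into
\[
-\tfrac{\g}{2\k^2}|u|^2+uf+d(f+\eta).
\]
Since $d\in\R$ is now unconstrained and enters linearly, the supremum is $+\infty$ unless the compatibility condition $f+\eta=0$ holds; this is precisely the affine constraint that appears in the statement. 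When $f+\eta=0$, the remaining concave quadratic in $u$ is maximized at $u^*=\k^2 f/\g$, yielding the value $\tfrac{\k^2 f^2}{2\g}$. Adding the two contributions gives the first branch of the claimed formula.

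Finally, I would verify the marginal case $(\g,G,f,\eta)=(0,0,0,0)$ to ensure the conjugate is $0$ and not $+\infty$ there (it is, since $(0,0,0,0)\in S^\k_\G$ attains the value $0$ in the sup), and note that the case $\g<0$ is already covered by the first step, contributing to the ``otherwise $+\infty$'' branch. Collecting the three cases produces the formula as stated.
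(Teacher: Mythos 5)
Your proof is correct and fills in the detail the paper omits (the paper states only that ``the proof of these two results relies on elementary finite-dimensional convex analysis and we omit the details''). Your direct computation of the Legendre transform — eliminating $a$ at the boundary of the constraint, handling the $b$-quadratic separately, and then the key substitution $u=c-d$ with $d$ free that forces the affine constraint $f+\eta=0$ — is exactly the elementary argument the authors had in mind, and all case distinctions ($\g<0$, $\g=0$, $\g>0$ with and without $f+\eta=0$) are treated correctly.
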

A possible alternative formulation is $\iota^*_{S^\k_\G}(\g,G,f,\eta)=\bar A^\k_\G(\g,G,f,\eta)$, where the \emph{extended action} on $\G$ is
\begin{equation}
\label{eq:def_A_extended}
\bar A^\k_\G(\g,G,f,\eta):=A^\k_\G(\g,G,f) +
\left\{
\begin{array}{ll}
0 & \mbox{if }f+\eta=0\\
+\infty & \mbox{otherwise}
\end{array}
\right.,
\end{equation}
and $A^\k_\G$ is as in Definition~\ref{defi:A_O_A_G}.
Note that $\bar A^\k_\G$ is convex, l.s.c., and one-homogeneous (as a convex conjugate, it is a supremum of linear functions). 
The condition $f+\eta=0$ reflects by duality the invariance of $S^\k_\G$ under $c+k,d+k$ discussed earlier.
We have similarly
\begin{lem}
\label{lem:computation_iotaO*}
For $(\o,F)\in\R\times\R^{d}$ the convex conjugate $\iota^*_{S_\O}$ of $\iota_{S_\O}$ is
$$
\iota^*_{S_\O}(\o,F) =A_\O(\o,F)=\left\{
\begin{array}{ll}
    \frac{|F|^2}{2\o} & \mbox{if }\o>0\\
0 & \mbox{if }(\o,F)=(0,0)\\
  +\infty & \mbox{otherwise}
\end{array}
\right..
$$
\end{lem}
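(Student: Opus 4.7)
The statement is a standard support-function computation, and is in fact the uncoupled analogue of Lemma~\ref{lem:computation_iotaG*}: there is no auxiliary multiplier $\eta$ and no flux-coupling term $c-d$, so the proof reduces to a one-variable optimization after eliminating $\alpha$. The strategy is simply to unfold the definition
\[
\iota^*_{S_\O}(\o,F)\;=\;\sup_{(\alpha,\beta)\in S_\O}\bigl(\alpha\,\o+\beta\cdot F\bigr)
\]
and to split according to the sign (and vanishing) of $\o$.

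First I would treat the generic case $\o>0$. Since the admissibility constraint reads $\alpha+\tfrac12|\beta|^2\le 0$ and the coefficient $\o$ of $\alpha$ in the objective is strictly positive, the supremum over $\alpha$ is attained on the boundary $\alpha=-\tfrac12|\beta|^2$. Substituting reduces the problem to the unconstrained concave quadratic
\[
\sup_{\beta\in\R^d}\Bigl(-\tfrac{\o}{2}|\beta|^2+\beta\cdot F\Bigr),
\]
whose unique maximizer is $\beta^\star=F/\o$, with maximum value $|F|^2/(2\o)$. This gives the first branch of the formula.

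Next I would dispose of the degenerate cases. If $\o<0$, choosing $\beta=0$ and $\alpha\to-\infty$ (which keeps $(\alpha,0)\in S_\O$) yields $\alpha\o\to+\infty$. If $\o=0$ but $F\neq 0$, the one-parameter family $\beta_t:=tF/|F|$, $\alpha_t:=-t^2/2$ lies in $S_\O$ for every $t>0$ and produces $\alpha_t\o+\beta_t\cdot F=t|F|\to+\infty$. Finally, if $(\o,F)=(0,0)$, every $(\alpha,\beta)\in S_\O$ gives $\alpha\o+\beta\cdot F=0$, and the admissible choice $(\alpha,\beta)=(0,0)$ attains this value, so the supremum equals $0$.

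No genuine obstacle is expected; the mild point to verify is that in the branch $\o=0$, $F\ne 0$ the scaling $(\alpha_t,\beta_t)$ indeed remains admissible for all $t>0$ (which is immediate from $\alpha_t+\tfrac12|\beta_t|^2=-\tfrac{t^2}2+\tfrac{t^2}2=0$), so that the supremum is genuinely $+\infty$ rather than merely unbounded along a non-admissible direction.
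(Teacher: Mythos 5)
Your computation is correct and complete. The paper itself omits the proof of this lemma (and of Lemma~\ref{lem:computation_iotaG*}), remarking only that it ``relies on elementary finite-dimensional convex analysis,'' and your argument is exactly the standard case-split that this remark has in mind: eliminate $\alpha$ on the constraint boundary when $\o>0$ to reduce to a concave quadratic in $\beta$, send $\alpha\to-\infty$ when $\o<0$, scale $(\alpha_t,\beta_t)$ along the parabola when $\o=0$ and $F\neq0$, and observe the trivial case $(\o,F)=(0,0)$.
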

The proof of these two results relies on elementary finite-dimensional convex analysis and we omit the details.\\

Let us write for brevity
$$
E:=\C^1(\QO)\times \C^1(\QG),
$$
and for $(\phi,\psi)\in E$ define the primal objective functional
\begin{multline}
\label{eq:def_J_Wrr}
\mathcal J^\k(\phi,\psi):=
  \int_\bO\phi(1,x)\rd\o_1(x)-\int_\bO\phi(0,x)\rd\o_0(x)  +  \int_\G \psi(1,x)\rd\g_1(x)  -  \int_\G\psi(0,x)\rd\g_0(x) 
  \\
  -
\iint_{\QO} \iota_{S_\O}(\partial_t\phi,\nabla\phi)\rd x\rd t
- \iint_{\QG} \iota_{S^\k_\G}(\partial_t\psi,\nabla\psi,\psi,\phi|_{\pO})\rd x \rd t.
\end{multline}
The main result in this section is
\begin{theo}
\label{theo:duality_exist_geodesics}
For fixed $\rho_0,\rho_1\in\Pp(\bO)$ we have duality
$$
  \Wrr^2(\rho_0,\rho_1)=\sup_{(\phi,\psi)\in E} \mathcal J^\k(\phi,\psi),
$$
and $\Wrr$-geodesics exist in the sense that $\inf=\min$ is attained in \eqref{eq:def_Wrr}.
\end{theo}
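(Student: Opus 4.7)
The plan is to apply the Fenchel-Rockafellar Theorem~\ref{theo:fenchel_rockafellar} in Benamou-Brenier fashion. I take as primal space $E := \C^1(\QO) \times \C^1(\QG)$ with its natural $\C^1$ norm, and introduce the continuous linear operator $L : E \to F := \C(\QO;\R^{d+1}) \times \C(\QG;\R^{d+2})$ defined by
$$
L(\phi,\psi) := \bigl(\partial_t\phi,\nabla\phi\,;\,\partial_t\psi,\nabla\psi,\psi,\phi|_{\pO}\bigr).
$$
The linear part $\mathcal F(\phi,\psi) := \int_\bO\phi(1,\cdot)\rd\o_1 - \int_\bO\phi(0,\cdot)\rd\o_0 + \int_\G\psi(1,\cdot)\rd\g_1 - \int_\G\psi(0,\cdot)\rd\g_0$ is affine and continuous on $E$, while
$$
\mathcal G(\alpha,\beta\,;\,a,b,c,d) := -\iint_\QO \iota_{S_\O}(\alpha,\beta) - \iint_\QG \iota_{S^\k_\G}(a,b,c,d)
$$
is concave and upper semi-continuous on $F$. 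By construction $\mathcal J^\k = \mathcal F + \mathcal G\circ L$, so the primal side of Theorem~\ref{theo:fenchel_rockafellar} reads $\sup_{(\phi,\psi) \in E} \mathcal J^\k(\phi,\psi)$.

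Next, I compute the conjugates. Using Lemmas~\ref{lem:computation_iotaO*} and \ref{lem:computation_iotaG*} together with the standard representation of convex $1$-homogeneous integral functionals on measure spaces \cite[Theorem 3.3]{bouchitte1990new}, the conjugate $-\mathcal G^*$ evaluated at a dual element $y^* = (\o,F\,;\,\g,G,f,\eta) \in F^*$ takes the form $\iint_\QO A_\O\bigl(\tfrac{\rd(\o,F)}{\rd\lambda_\O}\bigr)\rd\lambda_\O + \iint_\QG \bar A^\k_\G\bigl(\tfrac{\rd(\g,G,f,\eta)}{\rd\lambda_\G}\bigr)\rd\lambda_\G$, which coincides with the action $\A(\o,F\,;\,\g,G,f)$ of Definition~\ref{defi:action_functional} together with the pointwise constraint $f + \eta = 0$ built into $\bar A^\k_\G$. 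Simultaneously, evaluating the adjoint identity $\langle L^*y^*,(\phi,\psi)\rangle = \langle y^*,L(\phi,\psi)\rangle$ shows that $-\mathcal F^*(L^*y^*)$ equals $0$ precisely when the coupled continuity equations \eqref{eq:CE_omega}--\eqref{eq:CE_gamma} hold (with $\eta$ playing the role of $-f$ in the boundary integral of \eqref{eq:CE_omega}), and $+\infty$ otherwise. Combined with $f + \eta = 0$, this identifies $\min\{-\mathcal F^*(L^*y^*) - \mathcal G^*(y^*)\}$ with $\inf_{\mu \in \CE(\rho_0,\rho_1)} \A(\mu) = \Wrr^2(\rho_0,\rho_1)$.

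Finally, the qualification hypothesis is straightforward: taking $\phi_0(t,x) = \psi_0(t,x) := -2t$ makes all defining inequalities in $S_\O$ and $S^\k_\G$ hold strictly (with slack $2$), hence $\mathcal G$ vanishes identically on a uniform-norm ball around $L(\phi_0,\psi_0) \in F$ and is therefore continuous there. Theorem~\ref{theo:fenchel_rockafellar} then yields both $\sup_E \mathcal J^\k = \Wrr^2(\rho_0,\rho_1)$ and the existence of an optimal $\mu^* \in F^*$, which via the constraint $f^* + \eta^* = 0$ defines a $\Wrr$-geodesic in $\CE(\rho_0,\rho_1)$. The main obstacle I anticipate is the correct handling of the auxiliary dual variable $\eta$ coupled to the trace $\phi|_{\pO}$: the choice of $S^\k_\G$ with two independent scalar coordinates $c,d$ rather than a single $c-d$ is what allows the Fenchel-Rockafellar duality to generate \emph{two} flux variables $f,\eta$ that the extended Lagrangian $\bar A^\k_\G$ then forces to coincide up to sign, thereby bridging the abstract dual problem with the concrete bulk-to-boundary flux structure $F \cdot n = f$ of Definition~\ref{def:CE}.
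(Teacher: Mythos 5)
Your proof follows essentially the same Fenchel-Rockafellar strategy as the paper: the same choice of $E$, $F$, $L$, the same splitting $\mathcal J^\k = \mathcal F + \mathcal G\circ L$, the same identification of the dual objective with $\A$ plus the constraint $f+\eta=0$ via Lemmas~\ref{lem:computation_iotaO*}--\ref{lem:computation_iotaG*}, and the same qualification check on a strict subsolution (the paper uses $\phi=\psi=-t$, you use $-2t$, an inessential difference). Your argument is correct; the only cosmetic deviation is citing \cite{bouchitte1990new} rather than \cite[Theorem 5]{rockafellar1971integrals} for moving the conjugation under the integral sign.
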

Note carefully that the objective functional $\mathcal J^\k$ only depends on $\k$ through the second indicator $\iota_{S^\k_\G}$ encoding the Hamilton-Jacobi constraint $\partial_t\psi +\frac 12 |\nabla\psi|^2+\frac{1}{2\k^2}|\psi-\phi|^2\leq 0$ on $\G$.
This will be important in section~\ref{sec:varying toll} when we take the limits $\k\to 0$ and $\k\to+\infty$.
\begin{proof}
We closely follow the lines of \cite[Theorem 2.1]{chizat2018interpolating}.
The strategy of proof consists in identifying the minimization $\Wrr^2=\inf(\dots)$ as the dual problem to the primal maximization $\sup \mathcal J^\k$, and applying the Fenchel-Rockafellar duality theorem~\ref{theo:fenchel_rockafellar}.

We first define the unfolding operator
\begin{equation*}
\begin{array}{lrcl}
  L: &  E & \rightarrow & F\\
  & (\phi,\psi) & \mapsto & (\partial_t\phi,\nabla\phi\ ;\ \partial_t\psi,\nabla\psi,\psi,\phi|_{\pO})
\end{array}
\end{equation*}
Note that $L$ is obviously continuous for the natural $\mathcal C^1 $ and $\mathcal C^0$ topologies on $E,F$, respectively.
The primal problem $\sup \mathcal J^\k$ reads
$$
\sup\limits_{(\phi,\psi)\in E}\,
\Big\{ 
\mathcal{F}(\phi,\psi) + \mathcal G(L(\phi,\psi))
\Big\}
$$
with
$$
\mathcal F(\phi,\psi) := \int_\bO\phi(1,.) \,\rd \o_1-\int_\bO\phi(0,.) \,\rd \o_0  +  \int_\G \psi(1,.) \,\rd \g_1-\int_\G\psi(0,.) \,\rd \g_0
$$
and
$$
\mathcal{G}(\alpha,\beta;a,b,c,d):=-\int_0^1\int_\O \iota_{S_\O}(\alpha,\beta)\rd x\rd t
-\int_0^1\int_\G \iota_{S^\k_\G}(a,b,c,d) \rd x\rd t.
$$

Note that, because $\mathcal F$ is linear continuous and since $\iota_{S_\O},\iota_{S^\k_\G}$ are convex l.s.c, both $\mathcal F$ and $\mathcal G$ are concave, proper, u.s.c. functionals.
It is not hard to find at least a pair $(\phi,\psi)$ such that $\mathcal G$ is continuous at $L(\phi,\psi)$ and $\F(\phi,\psi)<+\infty$. (Take for example $\phi(t,x)=-t$ and $\psi(t,x)=-t$, which are \emph{strict} subsolutions of the Hamilton-Jacobi equations.)
The Fenchel-Rockafellar theorem~\ref{theo:fenchel_rockafellar} therefore guarantees that
\begin{equation}
\label{eq:J_fenchel_rockafellar}
\sup\limits_{(\phi,\psi)\in E} \mathcal J^\k=
\inf\limits_{\nu\in F^*}
\,\Big\{
-\mathcal F^*(-L^*\nu) -\mathcal G^*(\nu)
\Big\}
\end{equation}
where $-\mathcal F^*=(-\F)^*,-\mathcal G^*=(-\mathcal G)^*$ are the Fenchel-Legendre (convex) conjugates of the convex functions $-\mathcal F,-\mathcal G$, respectively.
Here $L^*:F^*\to E^*$ is the adjoint of $L$, and the target dual space identifies to
$$
F^*= \Big( \M(\QO)\times \M(\QO)^d \Big) \times 
  \Big(
  \M(\QG)\times \M(\QG)^{d-1}\times \M(\QG) \times \M(\QG)
  \Big)
$$
with elements denoted by
$$
\nu=(\nu_\O,\nu_\G)=(\o,F\ ;\ \g,G,f,\eta)\in F^*.
$$
(We use the notation $\nu$ instead of the previous $\mu=(\mu_\O,\mu_\G)=(\o,F\ ;\ \g,G,f)$ to emphasize the augmented scalar variable $\eta$.)
Let us compute separately the two conjugates in \eqref{eq:J_fenchel_rockafellar}.
\begin{itemize}
\item 
By definition of the Legendre-Fenchel transform we have
\begin{multline*}
-\mathcal F^*(-L^*\nu)=\sup\limits_{(\phi,\psi)\in E}\left\{
\langle -L^*\nu,(\phi,\psi)\rangle_{E^*,E} - (
-\mathcal F)(\phi,\psi)  
\right\}\\
=\sup\limits_{(\phi,\psi)\in E}\left\{
\mathcal F(\phi,\psi) - \langle \nu,L(\phi,\psi)\rangle_{F^*,F}
\right\}
\\
=\sup\limits_{(\phi,\psi)\in E}
\Bigg\{
\int_\bO\phi(1,.)\,\rd\o_1-\int_\bO\phi(0,.)\,\rd\o_0  +  \int_\G \psi(1,.)\,\rd\g_1-\int_\G\psi(0,.)\,\rd\g_0\\
  -\left(\iint_{\QO} \partial_t\phi\, \rd \o
  +\iint_{\QO} \nabla\phi \cdot \rd F\right)\\
  -\left(\iint_{\QG} \partial_t \psi \, \rd \g
  +\iint_{\QG} \nabla \psi \cdot \rd G
  + \iint_{\QG} \psi \, \rd f
  + \iint_{\QG} \phi|_{\pO}\, \rd \eta\right)
\Bigg\}
\end{multline*}
We recognize at once the convex indicator of the continuity equations with endpoints $\o_i,\g_i$ and boundary flux $-\eta$, in other words
\begin{equation}
\label{eq:computation_F*-L}
  -\mathcal F^*(-L^*\nu)
=\left\{
\begin{array}{ll}
  0 & \mbox{if }
  \left\{
  \begin{array}{l}
  \partial_t\o +\dive F =0  \mbox{ with }F\cdot n|_\pO=-\eta \mbox{ and }\o|_{t=0,1}=\o_{0,1}\\
    \partial_t\g +\dive G =f \mbox{ with }\g|_{t=0,1}=\g_{0,1}
    \end{array}
    \right.
    \\
    \\
    \vspace{.2cm}
    +\infty & \mbox{otherwise}
\end{array}
\right.
\end{equation}
Here the equations and initial-terminal/boundary conditions should be understood in the integral sense as in Definition~\ref{def:CE}.
\item
For the second conjugate in \eqref{eq:J_fenchel_rockafellar} we denote by $\xi=(\alpha,\beta\ ;\ a,b,c,d)$ a generic element in $F$, and  by $\nu=(\nu_\O,\nu_\G)=(\o,F\ ;\ \g,G,f,\eta)$ the dual elements of $F^*$.
We compute then
\begin{multline*}
-\mathcal G^*(\nu) = \sup\limits_{\xi\in F}
\Big\{
<\nu,\xi>_{F*,F} + \mathcal G(\xi)
\Big\}\\
=\sup\limits_{(\alpha,\beta\ ;\ a,b,c,d)\in F}
\Bigg\{
\iint_{\QO}\alpha\,\rd \o
+\iint_{\QO}\beta\cdot \rd F\\
+ \iint_{\QG}a\, \rd \g
+ \iint_{\QG}b\cdot  \rd G
+ \iint_{\QG} c\, \rd f
+ \iint_{\QG}d\, \rd \eta\\
-\iint_{\QO} \iota_{S_\O}(\alpha,\beta)\rd x\,\rd t
-\iint_{\QG} \iota_{S^\k_\G}(a,b,c,d)\rd x\,\rd t
\Bigg\},
\end{multline*}
and this clearly uncouples as
\begin{multline*}
-\mathcal G^*(\nu) = 
\sup\limits_{(\alpha,\beta)}
\Bigg\{
\iint_{\QO}\alpha\,\rd \o
+\iint_{\QO}\beta\cdot \rd F
-\iint_{\QO} \iota_{S_\O}(\alpha,\beta)\rd x\,\rd t
\Bigg\}\\
+
\sup\limits_{(a,b,c,d)}
\Bigg\{
  \iint_{\QG}a\, \rd \g
+ \iint_{\QG}b\cdot  \rd G
+ \iint_{\QG} c\, \rd f
+ \iint_{\QG}d\, \rd \eta
-\iint_{\QG} \iota_{S^\k_\G}(a,b,c,d)\rd x\,\rd t
\Bigg\}.
\end{multline*}
Applying \cite[Theorem 5]{rockafellar1971integrals} allows to ``take the convex conjugation under the integral sign'', and exploiting lemmas~\ref{lem:computation_iotaG*}\ref{lem:computation_iotaO*} to identify $\iota^*_{S_\O}=A_\O, \iota^*_{S^\k_\G}=\bar A^\k_\G$ leads to
\begin{multline*}
-\mathcal G^*(\nu) = 
\Bigg(\iint_{\QO}
A_\O\left(\frac{\rd \nu_\O}{\rd\mathcal L}\right)\rd \mathcal L
+
\iint_{\QO}
A_\O^\infty\left(\frac{\rd \nu_\O }{\rd \nu_\O^S}\right)\rd \nu_\O^S\Bigg)
\\
+
\Bigg(
\iint_{\QG}
\bar A^\k_\G\left(\frac{\rd \nu_\G}{\rd\mathcal L}\right)\rd \mathcal L
+
\iint_{\QG}
\bar A_\G^{\k\infty}\left(\frac{\rd \nu_\G }{\rd \nu_\G^S}\right)\rd \nu_\G^S
\Bigg)
\end{multline*}
Here $\mathcal L=\rd x\rd t$ denotes indistinctly the Lebesgue measure on $\QO$ or $\QG$, respectively, $\nu_\O^S,\nu_\G^S$ are any nonnegative measures dominating the singular parts of $|\nu_\O|,|\nu_\G|$, and $A_\O^\infty,\bar A_\G^{\k\infty}$ denote the recession functions of $A_\O, \bar A^\k_\G$.
Since $A_\O, \bar A^\k_\G$ are $1$-homogeneous their recession functions $A_\O^\infty=A_\O$ and $\bar A_\G^{\k\infty}=\bar A_\G^\k$, thus we can rewrite
$$
-\mathcal G^*(\nu) 
= \iint_{\QO}A_\O\left(\frac{\rd \nu_\O}{\rd\lambda_\O}\right)\rd \lambda_\O
+ \iint_{\QG}\bar A^\k_\G\left(\frac{\rd \nu_\G}{\rd\lambda_\G}\right)\rd \lambda_\G
$$
for any dominating measures $\lambda_\O\gg |\nu_\O|$ and $\lambda_\G\gg |\nu_\G|$.\\
So far we were writing $\nu=(\nu_\O,\nu_\G)=(\o,F\ ;\ \g,G,f,\eta)$, but let us now rather write
$$
\nu =(\mu_\O,\mu_\G,\eta):=(\o,F\ ;\ g,G,f\ ;\ \eta)
$$
in order to relate to the action functional \eqref{eq:def_action_functional}.
With this choice, and by definition \eqref{eq:def_A_extended} of the extended action $\bar A^\k_\G$, we can write
\begin{multline}
\label{eq:computation_G*}
-\mathcal G^*(\nu) 
= \iint_{\QO}A_\O\left(\frac{\rd \mu_\O}{\rd\lambda_\O}\right)\rd \lambda_\O
\ +\ \iint_{\QG}A^\k_\G\left(\frac{\rd \mu_\G}{\rd\lambda_\G}\right)\rd \lambda_\G
\ + \ \left\{
\begin{array}{ll}
0 & \mbox{if }f+\eta=0\\
+\infty & \mbox{otherwise}
\end{array}
\right.\\
=\mathcal A(\mu) 
\ +\  \left\{
\begin{array}{ll}
0 & \mbox{if }\eta=-f\\
+\infty & \mbox{otherwise}
\end{array}
\right.
\end{multline}
\end{itemize}
Gathering \eqref{eq:J_fenchel_rockafellar}\eqref{eq:computation_F*-L}\eqref{eq:computation_G*}, with now the correct flux condition $F\cdot n=f$ (since $F\cdot n =-\eta$ from $-\F^*(-L^*\nu)<+\infty$ and $\eta=-f$ from $-\mathcal G^*(\nu)<+\infty$), we end up with the claimed duality
$$
\sup\limits_{(\phi,\psi)\in E}\mathcal J^\k
=\inf \Big\{
\mathcal A(\mu)
\quad \mbox{ s.t. }\quad 
\mu\in\CE(\rho_0,\rho_1)
\Big\}=\Wrr^2(\rho_0,\rho_1)
$$
Finally, recall from Lemma~\ref{lem:Wrr_finite} that $\Wrr^2(\rho_0,\rho_1)=\inf\{\dots\}$ is always finite: the Fenchel-Rockafellar theorem further guarantees in that case the attainment $\inf=\min$ of the dual problem in $\sup \mathcal J^\k=\inf (\dots)$, and the proof is complete.
\end{proof}

As expected, we have
\begin{prop}
$\Wrr$ is a distance on $\Pp\left(\bO\right)$.
\end{prop}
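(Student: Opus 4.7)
The plan is to verify the three defining properties of a distance (non-negativity being obvious and finiteness being provided by Lemma~\ref{lem:Wrr_finite}): symmetry, the triangle inequality, and separation. Existence of minimizers (Theorem~\ref{theo:duality_exist_geodesics}) and the narrow-continuity estimate \eqref{eq:estimate_dBL} from Proposition~\ref{prop:disintegration_momentum_velocity}(iii) will be used repeatedly.

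For symmetry, given $\mu=(\o,F\,;\,\g,G,f)\in\CE(\rho_0,\rho_1)$, I would define the time-reversed tuple $\tilde\mu$ by $\tilde\o_t:=\o_{1-t}$, $\tilde\g_t:=\g_{1-t}$, $\tilde F_t:=-F_{1-t}$, $\tilde G_t:=-G_{1-t}$, $\tilde f_t:=-f_{1-t}$. A direct substitution shows that both continuity equations of Definition~\ref{def:CE} (together with the flux condition $F\cdot n=f$ on $\pO$) are preserved, so $\tilde\mu\in\CE(\rho_1,\rho_0)$. Because $A_\O$ and $A^\k_\G$ are even in the momentum/flux variables, $\A(\tilde\mu)=\A(\mu)$, and taking the infimum yields $\Wrr(\rho_0,\rho_1)=\Wrr(\rho_1,\rho_0)$.

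For the triangle inequality, I would argue by concatenation combined with the time-rescaling identity already used in Lemma~\ref{lem:Wrr_finite}: reparametrizing a path by $s=\tau t$ and dividing the flux variables $(F,G,f)$ by $\tau$ scales the action by a factor $1/\tau$. Given minimizers $\mu^{01}\in\CE(\rho_0,\rho_1)$ and $\mu^{12}\in\CE(\rho_1,\rho_2)$ attained thanks to Theorem~\ref{theo:duality_exist_geodesics}, I would rescale $\mu^{01}$ onto $[0,s]$ and $\mu^{12}$ onto $[s,1]$ and glue them. The glued path belongs to $\CE(\rho_0,\rho_2)$ because both pieces are narrowly continuous at the junction time $s$ and attain the common value $\rho_1$ there (by Proposition~\ref{prop:disintegration_momentum_velocity}(iii)), which ensures that the distributional identities \eqref{eq:CE_omega}--\eqref{eq:CE_gamma} match up without any extra Dirac mass in time. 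The resulting action is $\tfrac{1}{s}\Wrr^2(\rho_0,\rho_1)+\tfrac{1}{1-s}\Wrr^2(\rho_1,\rho_2)$, and optimizing over $s\in(0,1)$ in the usual way yields the sub-additivity of $\Wrr$.

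Separation is the only genuinely substantive step, and it is where I expect the main (but still mild) obstacle: one direction is immediate, while the converse relies on the a~priori estimate \eqref{eq:estimate_dBL}. If $\rho_0=\rho_1=(\o_0,\g_0)$, the stationary tuple $\o_t=\o_0$, $\g_t=\g_0$, $F=G=f=0$ lies in $\CE(\rho_0,\rho_0)$ with zero action, so $\Wrr(\rho_0,\rho_0)=0$. Conversely, if $\Wrr(\rho_0,\rho_1)=0$, Theorem~\ref{theo:duality_exist_geodesics} produces a minimizer $\mu\in\CE(\rho_0,\rho_1)$ with $\A(\mu)=0$, and applying \eqref{eq:estimate_dBL} with $s=0$, $t=1$ gives
\[
d_{\mathrm{BL},\bO}(\o_0,\o_1)+d_{\mathrm{BL},\G}(\g_0,\g_1)\leq C_\k\sqrt{\A(\mu)}=0.
\]
Since $d_{\mathrm{BL},\bO}$ and $d_{\mathrm{BL},\G}$ are genuine distances on $\M^+(\bO)$ and $\M^+(\G)$, this forces $\o_0=\o_1$ and $\g_0=\g_1$, i.e.\ $\rho_0=\rho_1$.
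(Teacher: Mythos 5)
Your proposal is correct and follows essentially the same strategy as the paper: symmetry by time-reversal (using evenness of the actions), the triangle inequality by concatenation plus the $1/\tau$ time-rescaling identity with optimization over the splitting time, and separation via existence of a minimizer together with the bounded-Lipschitz control. You make two aspects slightly more explicit than the paper does: you write out the time-reversed tuple and check the continuity equations, and you justify that the concatenation is admissible by appealing to the narrow continuity at the junction from Proposition~\ref{prop:disintegration_momentum_velocity}(iii). You also cite \eqref{eq:estimate_dBL} directly rather than the restatement \eqref{eq:dBL_leq_Wrr} (which the paper proves only later), so your logical ordering is actually cleaner. No gaps.
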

\begin{proof}
The symmetry $\Wrr(\rho_0,\rho_1)=\Wrr(\rho_1,\rho_0)$ is obvious, since the action is even in the $F,G,f$ variables and therefore the problem is completely time-symmetric.
\\
For the indiscernibles, consider $\Wrr(\rho_0,\rho_1)=0$.
By Theorem~\ref{theo:duality_exist_geodesics} there exists a minimizer $\mu\in\CE(\rho_0,\rho_1)$.
Owing to \eqref{eq:dBL_leq_Wrr} we see that $d_{\mathrm{BL},\bO}(\o_0,\o_1)+d_{\mathrm{BL},\G}(\g_0,\g_1)\leq C_\k\sqrt{\A(\mu)}=0$, thus $\o_1=\o_0$ and $\g_1=\g_0$ as required.
The converse is immediate: if $\rho_0=\rho_1$ then $F=G=f=0$ gives an admissible $\mu$ with cost zero.\\
For the triangular inequality, choose any $\rho_0,\rho_1,\rho_2\in\Pp(\bO)$.
By the previous step we can assume that they are pairwise distinct.
By theorem~\ref{theo:duality_exist_geodesics} there exist a minimizer $\mu_{01}=(\o_{01},F_{01}\ ;\ \g_{01},G_{01},f_{01})$ from $\rho_0$ to $\rho_1$ and a minimizer $\mu_{12}=(\o_{12},F_{12}\ ;\ \g_{12},G_{12},f_{12})$ from $\rho_1$ to $\rho_2$, both in time $t\in[01]$.
For any fixed $\theta\in (0,1)$ one can easily rescale $ \mu_{01} \leadsto  \mu_{01}^\theta$ in time $t\in[0,\theta]$ and $\mu_{12} \leadsto  \mu_{12}^\theta$ in time $t\in[0,1-\theta]$.
Concatenating $\mu_{01}^\theta$ and $\mu_{12}^\theta$ in times $[0,\theta]\cup[\theta,1]$ gives an admissible competitor $\mu^\theta_{02}$ connecting $\rho_0,\rho_2$ in time $t\in[0,1]$.
The resulting cost is
$$
\Wrr^2(\rho_0,\rho_2)
\leq \A(\mu^\theta_{02})=
\frac{1}{\theta}\mathcal A(\mu_{01})
+\frac{1}{1-\theta}\mathcal A(\mu_{12})
=\frac{1}{\theta}\Wrr^2(\rho_0,\rho_1)
+\frac{1}{1-\theta}\Wrr^2(\rho_1,\rho_2),
$$
and choosing $\theta:=\frac{\Wrr(\rho_0,\rho_1)}{\Wrr(\rho_0,\rho_1)+\Wrr(\rho_1,\rho_2)}$ finally gives
$$
\Wrr^2(\rho_0,\rho_2) \leq \Big(\Wrr(\rho_0,\rho_1)+\Wrr(\rho_1,\rho_2)\Big)^2.
$$
\end{proof}
%
\subsection{Characterization and properties of geodesics}
Our next result gives a sufficient condition for $\mu\in\CE(\rho_0,\rho_1)$ to be a minimizer, and provides the geodesic equations at least formally.
\begin{theo}
  \label{theo:characterization_geodesics}
Fix $\rho_0,\rho_1\in\Pp(\bO)$, and assume that $\mu=(\o,F\ ;\ \g,G,f)\in\CE(\rho_0,\rho_1)$ is such that $\mathcal A(\mu)<\infty$ and, for some $(\phi,\psi)\in \C^1(\QO)\times\C^1(\QG)$,
\begin{equation}
\label{eq:FGf_geodesics_potential_phi_psi}
F=\o \nabla \phi,
\qquad 
G=\g \nabla\psi , 
\qquad
f =  \g \frac{\psi-\phi|_{\pO}}{\k^2}
\end{equation}
as well as
\begin{equation}
\label{eq:assumption_phi_subsol}
\partial_t\phi + \frac{1}{2}|\nabla\phi|^2 \leq 0 \mbox{ everywhere in }\QO, \mbox{ with equality }\o-\mbox{a.e.},
\end{equation}
\begin{equation}
 \label{eq:assumption_psi_subsol}
 \partial_t\psi + \frac{1}{2}|\nabla\psi|^2 +\frac 1{2\k^2} \left|\psi-\phi\right|^2\leq 0 \mbox{ everywhere in }\QG, \mbox{ with equality }\g-\mbox{a.e.}.
\end{equation}
Then $\mu$ minimizes $\Wrr^2(\rho_0,\rho_1) =\mathcal A(\mu)$.
\end{theo}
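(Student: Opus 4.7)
The plan is to exploit the duality established in Theorem~\ref{theo:duality_exist_geodesics}: since $\Wrr^2(\rho_0,\rho_1)=\sup_{(\phi,\psi)\in E}\J^\k(\phi,\psi)=\inf_{\mu\in\CE(\rho_0,\rho_1)}\A(\mu)$, it suffices to produce a dual feasible pair $(\phi,\psi)$ achieving $\J^\k(\phi,\psi)=\A(\mu)$; by sandwiching
$$
\A(\mu) \;\geq\; \inf\A \;=\; \Wrr^2 \;=\; \sup\J^\k \;\geq\; \J^\k(\phi,\psi)\;=\;\A(\mu),
$$
every inequality becomes an equality and $\mu$ is a minimizer. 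The pair $(\phi,\psi)$ supplied by the hypothesis is automatically dual-feasible: assumptions \eqref{eq:assumption_phi_subsol}--\eqref{eq:assumption_psi_subsol} precisely say $(\partial_t\phi,\nabla\phi)\in S_\O$ and $(\partial_t\psi,\nabla\psi,\psi,\phi|_{\pO})\in S^\k_\G$ pointwise, so both indicator terms in $\J^\k$ vanish.

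The computation of $\J^\k(\phi,\psi)$ then just uses $\phi\in\C^1(\QO)$ and $\psi\in\C^1(\QG)$ as admissible test-functions in the weak continuity equations \eqref{eq:CE_omega}--\eqref{eq:CE_gamma}. Adding the two gives
$$
\J^\k(\phi,\psi) = \iint_\QO \partial_t\phi\,\rd\o + \iint_\QO \nabla\phi\cdot \rd F
+\iint_\QG \partial_t\psi\,\rd\g + \iint_\QG\nabla\psi\cdot \rd G
+\iint_\QG(\psi-\phi|_{\pO})\,\rd f.
$$
Inserting the representations \eqref{eq:FGf_geodesics_potential_phi_psi} collapses the three ``gradient'' terms into $\iint|\nabla\phi|^2\rd\o+\iint|\nabla\psi|^2\rd\g+\k^{-2}\iint|\psi-\phi|^2\rd\g$, and using the $\o$-a.e.\ and $\g$-a.e.\ saturated Hamilton-Jacobi equalities from \eqref{eq:assumption_phi_subsol}--\eqref{eq:assumption_psi_subsol} to rewrite $\partial_t\phi$ and $\partial_t\psi$ yields, after cancellation of the factor $2$,
$$
\J^\k(\phi,\psi)=\frac12\iint_\QO|\nabla\phi|^2\rd\o+\frac12\iint_\QG|\nabla\psi|^2\rd\g+\frac1{2\k^2}\iint_\QG|\psi-\phi|_{\pO}|^2\rd\g.
$$
On the other hand, starting from the velocity-form \eqref{eq:action_velocity_reaction} of the action, the Radon-Nikodym densities associated with \eqref{eq:FGf_geodesics_potential_phi_psi} are $u=\nabla\phi$, $v=\nabla\psi$ and $r=(\psi-\phi|_{\pO})/\k^2$; plugging these in gives exactly the same three terms, so $\A(\mu)=\J^\k(\phi,\psi)$ and the sandwich closes.

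The only mildly delicate point is a regularity check on $\mu$ so that the dual manipulations are legitimate: $\A(\mu)<\infty$ together with Proposition~\ref{prop:disintegration_momentum_velocity}\eqref{item:og>0_and_FGf_asolute_continuity} ensures $\o,\g\geq 0$ with $|F|\ll\o$ and $|G|,|f|\ll\g$, which is what makes the substitutions $\rd F=\nabla\phi\,\rd\o$, $\rd G=\nabla\psi\,\rd\g$, $\rd f=\k^{-2}(\psi-\phi|_{\pO})\,\rd\g$ unambiguous and the chain rules above valid. The subsolution inequalities then control the signs in the ``non-saturated'' regions (which carry zero mass for $\o$ or $\g$ respectively), so nothing is lost in applying equality on the support. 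Beyond this, no harder obstacle appears --- the statement is really the verification side of the Fenchel-Rockafellar duality already built in Theorem~\ref{theo:duality_exist_geodesics}.
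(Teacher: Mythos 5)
Your proof is correct, and it takes a genuinely different route from the paper's. The paper verifies the two subdifferential conditions $L^*\nu\in\partial(-\mathcal F)(\phi,\psi)$ and $L(\phi,\psi)\in\partial(-\mathcal G^*)(\nu)$ supplied by the Fenchel--Rockafellar theorem, which involves a somewhat lengthy computation with the subdifferentials of $A_\O$ and $\bar A^\k_\G$ and a careful Lebesgue decomposition $\lambda_\O=\o+\o^\perp$, $\lambda_\G=\g+\g^\perp$. You instead use the strong duality $\sup\mathcal J^\k=\inf\mathcal A=\Wrr^2$ already proved in Theorem~\ref{theo:duality_exist_geodesics} as a black box, and close the primal--dual gap directly: the subsolution inequalities \eqref{eq:assumption_phi_subsol}--\eqref{eq:assumption_psi_subsol} make $(\phi,\psi)$ feasible for $\mathcal J^\k$ (both indicator terms vanish), testing the continuity equations \eqref{eq:CE_omega}--\eqref{eq:CE_gamma} against $(\phi,\psi)$ rewrites the boundary terms in $\mathcal J^\k$ as the bilinear pairing $\iint\partial_t\phi\,\rd\o+\iint\nabla\phi\cdot\rd F+\iint\partial_t\psi\,\rd\g+\iint\nabla\psi\cdot\rd G+\iint(\psi-\phi|_{\pO})\rd f$, and then \eqref{eq:FGf_geodesics_potential_phi_psi} plus the $\o$-a.e.\ and $\g$-a.e.\ Hamilton--Jacobi \emph{equalities} collapse this to $\mathcal A(\mu)$ as written in \eqref{eq:action_velocity_reaction}. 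The sandwich $\mathcal A(\mu)\geq\Wrr^2=\sup\mathcal J^\k\geq\mathcal J^\k(\phi,\psi)=\mathcal A(\mu)$ then forces $\mu$ to be a minimizer (and, as a by-product, $(\phi,\psi)$ to be a maximizer). Both approaches are valid; yours is shorter and more elementary because it replaces the abstract subdifferential verification by a concrete integration-by-parts identity, at the (small) cost of invoking the duality equality from Theorem~\ref{theo:duality_exist_geodesics} rather than applying the extremality clause of the Fenchel--Rockafellar theorem directly. Your regularity remark is also correct: since $\nabla\phi,\nabla\psi,\psi-\phi|_\pO$ are continuous and bounded, the representations $F=\o\nabla\phi$ etc.\ automatically give the absolute continuities $|F|\ll\o$, $|G|,|f|\ll\g$, and the use of \eqref{eq:action_velocity_reaction} is legitimate because $\mathcal A(\mu)<\infty$ by hypothesis.
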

We expect these Hamilton-Jacobi conditions to be also necessary, thus fully characterizing all geodesics.
However the strong $\C^1$ regularity required above for $\phi,\psi$ should not be expected in all generality (see section~\ref{sec:explicit_geodesics_dirac_masses} and in particular the $\frac 1t$ loss of time regularity in \eqref{eq:interpolant_1d_o}), hence we shall be content with the ``sufficient'' part as in our statement.
Note that the condition $\mu\in\CE(\rho_0,\rho_1)$ implicitly prescribes the boundary condition $\o\nabla\phi\cdot n=F\cdot n=f=\g\frac{\psi-\phi}{\k^2}$ for $\phi$ on $\pO$.
\begin{proof}
The argument is adapted from \cite[Theorem 2.3]{chizat2018interpolating}.
With the same notations as in the proof of Theorem~\ref{theo:duality_exist_geodesics}, the Fenchel-Rockafellar duality (Theorem~\ref{theo:fenchel_rockafellar}) guarantees that a pair $(\phi,\psi)\in E$ is a maximizer of the primal problem $\sup\mathcal J^\k$ as soon as there exists some $\nu\in F^*$ such that
\begin{enumerate}[(i)]
  \item 
  $L^*\nu\in \partial(-\mathcal F) (\phi,\psi)$,
  \item
  $L(\phi,\psi)\in\partial (-\mathcal G^*)(\nu)$,
\end{enumerate}
in which case $\nu$ is necessarily a minimizer in \eqref{eq:J_fenchel_rockafellar}.
Recalling that we only use the extra variable $\eta$ to eliminate the invariance $\phi+k,\psi+k$, such a $\nu=(\mu,\eta)=(\o,F\ ;\ \g,G,f\ ;\ \eta)$ automatically gives a minimizer $\mu=(\o,F\ ;\ \g,G,f)\in\CE(\rho_0,\rho_1)$ in\eqref{eq:def_Wrr}.\\ 
Thus is suffices to check that (i) and (ii) hold with $\nu,\phi,\psi$ as in our statement, upon setting
$$
\nu:=(\nu_\O,\nu_\G)=(\o,F\ ;\ \g,G,f,-f) 
\hspace{1.5cm}(\mbox{i.e. taking }\eta:=-f).
$$
Condition (i) is automatically satisfied since $-\F$ is linear and $\mu$ solves the continuity equations: The subdifferential $\partial(-\F)(\phi,\psi)$ can be identified by computing, for arbitrary $(\phi',\psi')\in E$,
\begin{multline*}
-\F(\phi',\psi')+\F(\phi,\psi)
= 
\int_\bO[\phi'-\phi](1,.) \,\rd \o_1-\int_\bO[\phi'-\phi](0,.) \,\rd \o_0 
\\
+
\int_\G [\psi'-\psi](1,.) \,\rd \g_1-\int_\G[\psi'-\psi](0,.) \,\rd \g_0
\\
\overset{\eqref{eq:CE_omega}\eqref{eq:CE_gamma}}{=}
\iint_\QO \partial_t[\phi'-\phi]\,\rd \o
+ 
\iint_\QO \nabla[\phi'-\phi]\cdot \rd F
-
\iint_\QG [\phi'-\phi]\,\underbrace{\rd f}_{=-\rd\eta}
\\
+
\iint_\QG \partial_t[\psi'-\psi] \,\rd \g 
+ 
\iint_\QG \nabla [\psi'-\psi] \cdot \rd G
+
\iint_\QG [\psi'-\psi] \,\rd f
\\
=
\langle L(\phi'-\phi,\psi'-\psi),\nu\rangle_{F,F^*}
=
\langle (\phi'-\phi,\psi'-\psi),L^*\nu\rangle_{E,E^*}.
\end{multline*}
This means indeed that $L^*\nu\in\partial(-\F)(\phi,\psi)$.\\
For (ii) we first recall from the proof of Theorem~\ref{theo:duality_exist_geodesics} and \eqref{eq:computation_G*} that, writing $\nu=(\mu,\eta)=(\o,F,\g,G,f\ ;\ \eta)$, we already computed $-\mathcal G^*(\nu)=\A(\mu)+\iota_{[f=-\eta]}(\nu)$.
Exploiting the definition \eqref{eq:def_A_extended} of the extended action $\bar A^\k_\G$, the constraint $f=-\eta$ can be encoded into the boundary contribution to write
$$
-\mathcal G^*(\nu)
=
\iint_{\QO}A_\O\left(\frac{\rd \nu_\O}{\rd\lambda_\O}\right)\rd\lambda_\O
+
\iint_{\QG}\bar A^\k_\G \left(\frac{\rd \nu_\G}{\rd\lambda_\G}\right)\rd\lambda_\G.
$$
Here we denote again $\nu_\O=(\o,F)$ and $\nu_\G=(\g,G,f,\eta)$, and choose any reference measures $\lambda_\O,\lambda_\G$ such that $|\nu_\O|\ll\lambda_\O$ and $|\nu_\G|\ll\lambda_\G$.
We want to prove that $L(\phi,\psi)\in\partial (-\mathcal G^*)(\nu)$, which amounts to showing that
$$
-\mathcal G^*(\nu') + \mathcal G^*(\nu)\geq \langle L(\phi,\psi),\nu'-\nu\rangle _{F,F^*}
$$
for any $\nu'\in F^*$.
To this end we first write
\begin{multline*}
-\mathcal G^*(\nu') + \mathcal G^*(\nu)
=\iint_{\QO}\left(A_\O\left(\frac{\rd \nu'_\O}{\rd\lambda_\O}\right)-A_\O\left(\frac{\rd \nu_\O}{\rd\lambda_\O}\right)\right)\rd\lambda_\O\\
+ \iint_{\QG}\left(\bar A^\k_\G\left(\frac{\rd \nu'_\G}{\rd\lambda_\G}\right)-\bar A^\k_\G\left(\frac{\rd \nu_\G}{\rd\lambda_\G}\right)\right)\rd\lambda_\G
\end{multline*}
where the $\lambda$'s can be chosen to dominate simultaneously $|\nu_\O|+|\nu'_\O|\ll\lambda_\O$ and $|\nu_\G|+|\nu'_\G|\ll\lambda_\G$.
By Lebesgue decomposition such reference measures can always be taken of the form
$$
\lambda_\O=\o+\o^\perp,
\qquad
\lambda_\G=\g+\g^\perp,
$$
with $\o,\o^\perp$ and $\g,\g^\perp$ respectively mutually singular.
Therefore
\begin{multline}
\label{eq:computation_Gnu'-Gnu}
-\mathcal G^*(\nu') + \mathcal G^*(\nu)
=\Bigg[
\iint_{\QO}\left(A_\O\left(\frac{\rd \nu'_\O}{\rd\lambda_\O}\right)-A_\O\left(\frac{\rd \nu_\O}{\rd\lambda_\O}\right)\right)\rd\o\\
\hspace{2cm} + \iint_{\QO} \left (A_\O\left(\frac{\rd \nu'_\O}{\rd\lambda_\O}\right)-A_\O\left(\frac{\rd \nu_\O}{\rd\lambda_\O}\right)\right)\rd\o^\perp \Bigg]\\
+ \Bigg[
\iint_{\QG}\left(\bar A^\k_\G\left(\frac{\rd \nu'_\G}{\rd\lambda_\G}\right)-\bar A^\k_\G\left(\frac{\rd \nu_\G}{\rd\lambda_\G}\right)\right)\rd\g\\
\hspace{2cm} + \iint_{\QG} \left (\bar A^\k_\G\left(\frac{\rd \nu'_\G}{\rd\lambda_\G}\right)-\bar A^\k_\G\left(\frac{\rd \nu_\G}{\rd\lambda_\G}\right)\right)\rd\g^\perp\Bigg]\\
=
\iint_{\QO}\underbrace{\left(A_\O\left(\frac{\rd \nu'_\O}{\rd\lambda_\O}\right)-A_\O\left(\frac{\rd \nu_\O}{\rd\lambda_\O}\right)\right)}_{:=A(t,x)}\rd\o
+ \iint_{\QO} \underbrace{A_\O\left(\frac{\rd \nu'_\O}{\rd\lambda_\O}\right)}_{:=B(t,x)}\rd\o^\perp\\
+
\iint_{\QG}\underbrace{\left(\bar A^\k_\G\left(\frac{\rd \nu'_\G}{\rd\lambda_\G}\right)-\bar A^\k_\G\left(\frac{\rd \nu_\G}{\rd\lambda_\G}\right)\right)}_{:=C(t,x)}\rd\g
+ \iint_{\QG}\underbrace{ \bar A^\k_\G\left(\frac{\rd \nu'_\G}{\rd\lambda_\G}\right)}_{:=D(t,x)}\rd\g^\perp,
\end{multline}
because $A_\O\left(\frac{\rd \nu_\O}{\rd\lambda_\O}\right)=0$ for $\rd\o^\perp$ a.e. $(t,x)$ and $A^\k_\G\left(\frac{\rd \nu_\G}{\rd\lambda_\G}\right)=0$ for $\rd\g^\perp$ a.e. $(t,x)$ due to $\o\perp\o^\perp$ and $\g\perp\g^\perp$.
Moreover, straightforward (finite-dimensional) convex analysis shows that the subdifferentials
$$
\partial A_\O(\o,F)=
\left\{
\begin{array}{ll}
\left\{
\left(-\frac{|F|^2}{2\o^2},\frac F\o\right)
\right\} & \mbox{if }\o>0 
\\
S_\O & \mbox{if }(\o,F)=(0,0)
\\
\emptyset & \mbox{otherwise}
\end{array}
\right.
$$
and
\begin{multline*}
\partial \bar A^\k_\G(\g,G,f,\eta)\overset{\eqref{eq:def_A_extended}}{=}
\Bigg\{
(a,b,c,d):
\qquad
(a,b,c-d)\in\partial A^\k_\G(\g,G,f)
\Bigg\}
\\
=
\left\{
\begin{array}{ll}
\left\{
\left(-\frac{|G|^2+\k^2f^2}{2\g^2},\frac G\g\right)
\right\}
\times \left\{
(c,d)\mbox{ s.t. } c-d=\k^2\frac{f}{\g}
\right\}
& \mbox{if }\g>0 \mbox{ and }f+\eta=0
\\
S^\k_\G
& \mbox{if }(\g,G,f,\eta)=(0,0,0,0)
\\
\emptyset & \mbox{otherwise}
\end{array}
\right.
.
\end{multline*}
We recall that $S_\O,S^\k_\G$ were defined in \eqref{eq:def_D_Omega}\eqref{eq:def_D_Gamma}.
\begin{rmk}
  The quantity $c-d$ appearing in the computation of $\partial\bar A^\k_\G$ is of course dual (orthogonal) to $f+\eta$ appearing in \eqref{eq:def_A_extended}, and $\partial\bar A^\k_\G$ is accordingly invariant under diagonal shifts $c+k,d+k$.
\end{rmk}
\noindent
Our assumption \eqref{eq:assumption_phi_subsol} on $\phi$ precisely means
$$
  \begin{array}{cl}
  (\partial_t\phi,\nabla\phi)(t,x)\in \partial A_\O\left(\frac{\rd \nu_\O}{\rd\lambda_\O}(t,x)\right)
  &
  \mbox{for }\o-\mbox{a.e. }t,x
  \\
  (\partial_t\phi,\nabla\phi)(t,x)\in \partial A_\O(0)
  & 
  \mbox{for all }t,x,
  \end{array}
$$
hence
\begin{align}
  \label{eq:A(t,x)_geq}
  &A(t,x)\geq (\partial_t\phi,\nabla\phi)\cdot
  \left( \frac{\rd\nu'_\O}{\rd\lambda_\O} -\frac{\rd\nu_\O}{\rd\lambda_\O}\right)(t,x)
  &\mbox{for }\o-\mbox{a.e. }t,x
   \\
   \label{eq:B(t,x)_geq}
   &B(t,x)\geq 0+(\partial_t\phi,\nabla\phi)\cdot
  \left( \frac{\rd\nu'_\O}{\rd\lambda_\O} -\frac{\rd\nu_\O}{\rd\lambda_\O}\right)
  (t,x)
  &\mbox{for all }t,x.
\end{align}
Similarly, our assumption \eqref{eq:assumption_psi_subsol} on $\phi,\psi$ means
$$
  \begin{array}{cl}
    (\partial_t\psi,\nabla\psi,\psi,\phi)(t,x)\in \partial \bar A^\k_\G\left(\frac{\rd \nu_\G}{\rd\lambda_\G}(t,x)\right)
    &
    \mbox{for }\g-\mbox{a.e. }t,x
    \\
    (\partial_t\psi,\nabla\psi,\psi,\phi)(t,x) \in \partial \bar A^\k_\G(0)
    &
    \mbox{for all }t,x,
  \end{array}
$$
hence
\begin{align}
\label{eq:C(t,x)_geq}
& C(t,x)\geq (\partial_t\psi,\nabla\psi,\psi,\phi)\cdot
\left( \frac{\rd\nu'_\G}{\rd\lambda_\G} -\frac{\rd\nu_\G}{\rd\lambda_\G}\right)
(t,x)
&
\mbox{for }\g-\mbox{a.e. }t,x
\\
\label{eq:D(t,x)_geq}
& D(t,x)\geq 0+(\partial_t\psi,\nabla\psi,\psi,\phi)\cdot
\left( \frac{\rd\nu'_\G}{\rd\lambda_\G} -\frac{\rd\nu_\G}{\rd\lambda_\G}\right)
(t,x)
&\mbox{for all }t,x.
\end{align}
Injecting \eqref{eq:A(t,x)_geq}\eqref{eq:B(t,x)_geq}\eqref{eq:C(t,x)_geq}\eqref{eq:D(t,x)_geq} into \eqref{eq:computation_Gnu'-Gnu} gives
\begin{multline*}
-\mathcal G^*(\nu') + \mathcal G^*(\nu)
\geq
\iint_{\QO}(\partial_t\phi,\nabla\phi)\cdot
\left( \frac{\rd\nu'_\O}{\rd\lambda_\O} -\frac{\rd\nu_\O}{\rd\lambda_\O}\right)
\rd\o
+ \iint_{\QO} (\partial_t\phi,\nabla\phi)\cdot
\left( \frac{\rd\nu'_\O}{\rd\lambda_\O} -\frac{\rd\nu_\O}{\rd\lambda_\O}\right)
\rd\o^\perp\\
+
\iint_{\QG}(\partial_t\psi,\nabla\psi,\psi,\phi)\cdot
\left( \frac{\rd\nu'_\G}{\rd\lambda_\G} -\frac{\rd\nu_\G}{\rd\lambda_\G}\right)
\rd\g
+ \iint_{\QG}(\partial_t\psi,\nabla\psi,\psi,\phi)\cdot
\left( \frac{\rd\nu'_\G}{\rd\lambda_\G} -\frac{\rd\nu_\G}{\rd\lambda_\G}\right)
\rd\g^\perp\\
=
\iint_{\QO}(\partial_t\phi,\nabla\phi)\cdot \rd (\nu'_\O-\nu_\O)
+
\iint_{\QG}(\partial_t\psi,\nabla\psi,\psi,\phi)\cdot\rd(\nu'_\G-\nu_\G)\\
=\langle L(\phi,\psi),\nu'-\nu\rangle)_{F,F^*},
\end{multline*}
where the middle equality stems from our choice $\lambda_\O=\o+\o^\perp$ and $\lambda_\G=\g+\g^\perp$.
This finally entails $L(\phi,\psi)\in\partial (-\mathcal G^*)(\nu)$ and achieves the proof.
\end{proof}
We address next the natural question of constant-speed interpolations.
For any fixed $\rho_0,\rho_1\in\Pp(\bO)$ Theorem~\ref{theo:duality_exist_geodesics} always gives at least one minimizer $\mu=(\o,F\ ;\ \g,G,f)$ with action $\A(\mu)=\Wrr^2(\rho_0,\rho_1)<+\infty$.
By Proposition~\ref{prop:disintegration_momentum_velocity} the interior and boundary densities disintegrate in time $\o=\o_t\rd t$ and $\g=\g_t\rd t$, thus $\rho=\rho_t\rd t$ as well with $\rho_t:=(\o_t,\g_t)\in\Pp(\bO)$ for all $t\in[0,1]$.
Moreover $t\mapsto\rho_t$ is doubly narrowly continuous due to \eqref{eq:estimate_dBL}.
The measure $\rho_t\in \Pp(\bO)$ can thus be evaluated unambiguously at any time, and yields a natural interpolant $(\rho_t)_{t\in[0,1]}$ between the two endpoints.
As can be expected, this interpolant is consistent with the metric notion of constant-speed geodesics:
\begin{prop}
\label{prop:cst_speed}
 Take $\rho_0,\rho_1\in\Pp(\bO)$, let $(\o_t,F_t,\g_t,G_t,f_t)\rd t$ be any geodesic, and let $\rho_t=(\o_t,\g_t)\in\Pp(\bO)$ be the corresponding interpolant, $t\in[0,1]$.
 Then
 $$
 \Wrr(\rho_s,\rho_t)=|t-s|\Wrr(\rho_0,\rho_1),
 \qquad \forall\,s,t\in[0,1].
 $$
\end{prop}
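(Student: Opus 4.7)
The plan is to combine a time-rescaling argument with the triangle inequality and a Cauchy–Schwarz equality case, which is the standard Benamou–Brenier-style strategy.

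First I would show the \emph{subadditivity} of the action under time-restriction and rescaling. For $0\leq s<t\leq 1$, restrict the geodesic $\mu$ to the sub-interval $[s,t]$ and reparametrize via $\tau=(r-s)/(t-s)$, setting $\tilde\o_\tau:=\o_{s+(t-s)\tau}$, $\tilde F_\tau:=(t-s)F_{s+(t-s)\tau}$, and analogously $\tilde\g_\tau,\tilde G_\tau,\tilde f_\tau$ (rescaling the tangent variables by the factor $(t-s)$). The continuity equations in Definition~\ref{def:CE} are preserved because they are linear and the factor $(t-s)$ compensates the chain-rule derivative in $\tau$. Thus $\tilde\mu\in\CE(\rho_s,\rho_t)$, and since $A_\O,A^\k_\G$ are quadratic in $(F,G,f)$ and one-homogeneous, a direct change of variables in \eqref{eq:def_action_functional} gives
\begin{equation*}
\A(\tilde\mu)=(t-s)\,\A_{[s,t]}(\mu),\qquad\text{where}\qquad \A_{[s,t]}(\mu):=\iint_{[s,t]\times\bO}A_\O+\iint_{[s,t]\times\G}A^\k_\G.
\end{equation*}
By Definition~\ref{defi:Wrr} this yields the key bound $\Wrr^2(\rho_s,\rho_t)\leq (t-s)\A_{[s,t]}(\mu)$.

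Next I would apply the triangle inequality to the three-point decomposition $\rho_0\to\rho_s\to\rho_t\to\rho_1$. Using the above bound on each sub-interval together with Cauchy–Schwarz on $\R^3$ with weights $(s,t-s,1-t)$:
\begin{align*}
\Wrr(\rho_0,\rho_1)
&\leq \Wrr(\rho_0,\rho_s)+\Wrr(\rho_s,\rho_t)+\Wrr(\rho_t,\rho_1)\\
&\leq \sqrt{s}\sqrt{\A_{[0,s]}(\mu)}+\sqrt{t-s}\sqrt{\A_{[s,t]}(\mu)}+\sqrt{1-t}\sqrt{\A_{[t,1]}(\mu)}\\
&\leq \sqrt{s+(t-s)+(1-t)}\cdot\sqrt{\A_{[0,s]}(\mu)+\A_{[s,t]}(\mu)+\A_{[t,1]}(\mu)}\\
&=\sqrt{\A(\mu)}=\Wrr(\rho_0,\rho_1),
\end{align*}
where the last equality uses the fact that $\mu$ is a geodesic, i.e. $\A(\mu)=\Wrr^2(\rho_0,\rho_1)$.

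Since the chain collapses to equality throughout, two things follow. The triangle-inequality step is saturated, so each segment is itself optimal, $\Wrr^2(\rho_s,\rho_t)=(t-s)\A_{[s,t]}(\mu)$. The Cauchy–Schwarz step is also saturated, which forces proportionality of the two vectors $(\sqrt{s},\sqrt{t-s},\sqrt{1-t})$ and $(\sqrt{\A_{[0,s]}(\mu)},\sqrt{\A_{[s,t]}(\mu)},\sqrt{\A_{[t,1]}(\mu)})$, whence $\A_{[s,t]}(\mu)=(t-s)\A(\mu)$. Combining the two gives $\Wrr^2(\rho_s,\rho_t)=(t-s)^2\A(\mu)=(t-s)^2\Wrr^2(\rho_0,\rho_1)$, which is the claim.

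The only potentially delicate point is the rescaling step in the measure-theoretic framework, since $\mu$ is a general Radon measure rather than a smooth interpolation. This is handled transparently by working with the disintegration $\o=\o_t\rd t$, $F=u_t\o_t\rd t$ etc.\ provided by Proposition~\ref{prop:disintegration_momentum_velocity}: the rescaled disintegrated densities and Radon–Nikodym velocities \eqref{eq:action_velocity_reaction} transform exactly by the scalar factors above, and the additivity $\A_{[0,s]}(\mu)+\A_{[s,t]}(\mu)+\A_{[t,1]}(\mu)=\A(\mu)$ is immediate from the integral form of \eqref{eq:def_action_functional}. No further regularity or duality machinery is needed.
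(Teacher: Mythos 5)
Your proof is correct and is a valid, self-contained alternative to the paper's argument. The paper's proof (also sketched) relies on the same two ingredients — optimality on each sub-interval, and constancy of the instantaneous action — but establishes the latter by an arc-length reparametrization borrowed from external references (Kondratyev--Monsaingeon--Vorotnikov and Dolbeault--Nazaret--Savaré). You instead deduce \emph{both} facts at once from a single chain of inequalities: the triangle inequality on the three-point split $\rho_0\to\rho_s\to\rho_t\to\rho_1$, the time-rescaling estimate $\Wrr^2(\rho_s,\rho_t)\leq(t-s)\A_{[s,t]}(\mu)$, and Cauchy--Schwarz with weights $(s,t-s,1-t)$. Since the chain starts and ends at $\Wrr(\rho_0,\rho_1)$, every inequality saturates: saturation of the term-by-term step gives local optimality, and saturation of Cauchy--Schwarz gives the equidistribution $\A_{[s,t]}(\mu)=(t-s)\A(\mu)$. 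This is a clean, elementary route that avoids citing the arc-length lemma, at the cost of a slightly more bookkeeping-heavy three-interval decomposition. One cosmetic remark: you tacitly assume $0\leq s<t\leq 1$; for $s>t$ simply invoke symmetry of the distance, and the degenerate cases $s=t$ or $\A(\mu)=0$ are trivial. The rescaling step and its interaction with the disintegration of Proposition~\ref{prop:disintegration_momentum_velocity} are handled correctly.
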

\begin{proof}
The argument is fairly standard and we only sketch the proof.
Since solutions of the generalized continuity equation (Definition~\ref{def:CE}) can be concatenated in time, and because $\rho_t$ is doubly-narrowly continuous, it is easy to see that $\mu=(\o_\tau,F_\tau,\g_\tau,G_\tau,f_\tau)\rd\tau$ must be optimal in any subinterval $\tau\in[s,t]\subset [0,1]$ with fixed endpoints $\rho_s,\rho_t$, for $s\leq t$.
 A standard arc-length reparametrization (see e.g. \cite[Lemma 5.3]{kondratyev2017new} or the proof of \cite[Theorem 5.4]{dolbeault2009new}) then shows that the action
 $A(\mu_\tau)=\frac{|F_\tau|^2}{2\o_\tau}+\frac{|G_\tau|^2+\k^2 f^2_\tau}{2\g_\tau}=cst=\Wrr^2(\rho_0,\rho_1)$ is constant in time, and our statement immediately follows as
 $$
\Wrr^2(\rho_s,\rho_t)=|t-s|\int_s^t A(\mu_\tau)\rd\tau=|t-s|^2\Wrr^2(\rho_0,\rho_1).
 $$
\end{proof}

\section{Explicit geodesics for point-masses}
\label{sec:explicit_geodesics_dirac_masses}
In this section we compute the distance and geodesics for two Dirac masses: One in the interior $\rho_0=(\delta_{x_0},0)$, the other on the boundary $\rho_1=(0,\delta_{x_R})$ at a distance $R>0$, and with supporting segment $I=[x_0,x_R)\subset \O$ lying in the interior as depicted in Figure~\ref{fig:I}.
We make no assumption on the contact angle between $I$ and $\pO$, in particular the segment $[x_0,x_R]$ may very well be tangent to $\pO$ at $x_R$.

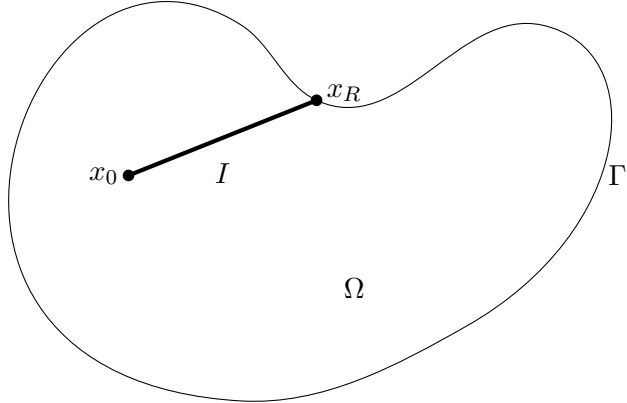
\begin{figure}[h!]
\begin{center}
\begin{tikzpicture}[use Hobby shortcut]
  \path
  (0,0) coordinate (z0)
  (3,1) coordinate (z1)
  (4,5) coordinate (z2)
  (1,4) coordinate (z3)
  (0,5) coordinate (z4);
  \draw[closed] (z0) .. (z1) .. (z2) .. (z3) .. (z4);

  \filldraw (1,4) circle (2pt);
  \node [above, right] at (1,4.1) {$x_R$};
  
  \filldraw (-1.5,3) circle (2pt);
  \node [left] at (-1.5,3) {$x_0$};

  \draw [ultra thick](1,4)--(-1.5,3);
  \node [below] at (-.25,3.3) {$I$};
  \node  at (1.5,1.5) {$\O$};
  \node at (5,3) {$\G$};
\end{tikzpicture}
\end{center}
\caption{1D geodesic along $I=[x_0,x_R]$}
\label{fig:I}
\end{figure}
As tempting as it might be, the interpolating measure simply cannot be a traveling Dirac mass:
Being quadratic, the flux cost $|f|^2/\g$ indeed prevents an instantaneous jump of mass from $\g_t=0$ for $t<1$ to $\g_1=\delta_{x_R}$ at time $t=1$.
Therefore we need a more clever ansatz.
All the computations below will remain formal as a first step but will allow to compute explicitly the solution.
In order to make the analysis rigorous we will then use the verification Theorem~\ref{theo:characterization_geodesics} to check a posteriori that the interpolant computed formally is really a geodesic.

Two effects will be competing in the total action $\A_\bO+\A^\k_\G$:
On the one hand, since the flux penalization is exactly the Fisher-Rao Lagrangian $|f|^2/2\g$, and because no motion should be involved on the boundary, the sought $\Wrr$ geodesic has a strong incentive to conform as much as possible to a Fisher-Rao geodesic at least for the boundary mass $\g$.
The latter is known to be quadratic in time, $\g_t\approx t^2$ and $f_t=\partial_t\g_t\approx 2t$.
On the other hand for our coupled model such a growth is only possible if a nontrivial influx $f=F\cdot n$ arises from the interior.
In the absence of coupling the optimal motion in the interior would be given by Wasserstein displacement, and particles would tend to move with constant \emph{velocity} from $x_0$ to $x_R$.
The previous Fisher-Rao behavior $f_t\approx 2t$ rather corresponds to particle arriving at the boundary with constant \emph{acceleration}.
The two separate boundary/interior optimizers are thus incompatible with each other, and therefore a delicate transition occurs between constant speed and constant acceleration.
Let us now try to put this heuristic discussion on more solid ground.
\\

Since we are clearly in a one-dimensional framework we choose to parametrize the segment $I=[x_0,x_R]$ by arc-length $r\in[0,R]$, and we set the origin $r=0$ at $x_0\in\O$ with $r=R$ at $x_R\in\pO$.
We argue below as if the whole problem were set in the one dimensional segment $\bO= I=[0,R]$, and we will compute explicitly the geodesics in the variables $(t,r)$.

Since the interior density can only penetrate the boundary gradually in order to keep a finite flux cost, it seems clear that $\o_t$ must somehow scatter along the line $r\in[0,R]$ and that the mass initially concentrated at $r=0$ must split.
We thus choose to consider the interior density $\o_t$ as a continuous superposition of Lagrangian particles initially labeled by $y\in [0,1]$, all starting at $r=0$ with infinitesimal mass $\rd y$.
We denote by
\begin{center}
$X^y_t=$ position at time $t$ of a particle with label $y$, 
\end{center}
thus satisfying at time $t=0$
$$
X^y_0=0
\qquad\mbox{for all }y\in[0,1].
$$
Moreover since some mass of $\o_t$ must contribute to growing $\g_t$ via the outflux at $r=R$, some particles must eventually reach the boundary before $t=1$ and should accordingly be discarded afterwards (for if not, all the particles would reach the boundary simultaneously at $t=1$ and the flux cost would be infinite).
We denote by $[0,Y_t]$ the labels of particles that have not reached the boundary by time $t$, for some $Y_t\in [0,1]$  still to be determined and satisfying $Y_0=1$ and $Y_1=0$ (all the interior mass should vanish at $t=1$).
The remaining particles $y\in[Y_t,1]$ at time $t$ have already been absorbed by the boundary and transformed into $\g$ particles, and should not contribute to the interior density.
Whence our ansatz:
\begin{equation}
  \label{eq:ansatz_geodesics_superposition}
\o_t := \int_0^{Y_t} \delta_{X^y_t}\rd y
=X_t\pf\left(\operatorname{Leb}^1_{[0,Y_t]}\right)
\qqtext{and}
\g_t:=m_t\delta_R
\end{equation}
as depicted in Figure~\ref{fig:ansatz_1D}, where $\operatorname{Leb}^1_{[0,Y_t]}$ denotes the $1$-dimensional Lebesgue measure restricted to $[0,Y_t]\subset \R^1$.
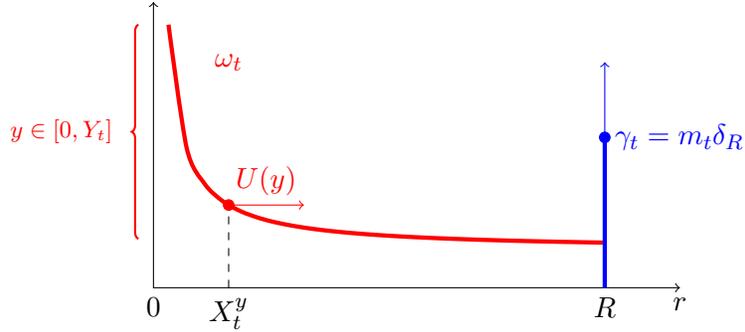
\begin{figure}[h!]
  \begin{center}
  \begin{tikzpicture}
  \draw [<->] (0,3.8) -- (0,0) -- (7,0);
  \node [below] at (0,0) {$0$};
  \node [below] at (6,0) {$R$};
  \node [below] at (7,0) {$r$};
  
  \draw [domain=.2:6,smooth,variable=\x,red,ultra thick] plot({\x},{.5+.6/\x});
  \draw [dashed] (1,0)--(1,1.1);
  \filldraw [red] (1,1.1) circle (2pt);
  \node [below] at (1,0) {$X_t^y$};
  \draw [->,red] (1,1.1) -- (2,1.1);
  \node [red,above] at (1.5,1.1) {$U(y)$};
  \node [red] at (1,3) {$\o_t$};

  \draw [ultra thick, blue] (6,0) -- (6,2);
  \filldraw [blue] (6,2) circle (2pt);
  \node [right,blue] at (6,2) {$\g_t=m_t\delta_R$};
  \draw [->,blue] (6,2)--(6,3);
  
  \draw [decorate,thick,red,decoration={brace}] (-.2,.55+.6/6) -- (-.2,3.5) node [red,left,midway,xshift=-.2cm]{\footnotesize $y\in[0,Y_t]$};
  
  \end{tikzpicture}
  \end{center}
  \caption{The one-dimensional superposition}
  \label{fig:ansatz_1D}
  \end{figure}
Here $m_t=\TV{\g_t}$ is the total mass on the boundary and should satisfy the mass conservation
$$
\TV{\o_t}+\TV{\g_t}=\int_0^{Y_t}\TV{\delta_{X^y_t}}\rd y + \TV{\g_t}= Y_t + m_t =1.
$$
In order to find the geodesic we proceed by alternate optimization:
We first minimize the interior flow of the particles for a given boundary mass profile $t\mapsto m_t$, compute the optimal cost as a functional of $m$, and then we minimize the resulting cost w.r.t. all admissible $m$'s.
We will then reconstruct a posteriori the Eulerian fields  $\o_t(r),u_t(r),\g_t,f_t$.
\begin{enumerate}
 \item 
 {\bf (optimization for fixed $m$)}
 Intuitively it seems obvious that an optimal $m_t$ should increase in time from $m_0=0$ to $m_1=1$, and we thus assume that $\dot m_t>0$.
 The key point in this first step is that, just like in classical optimal transport, particles should have zero Lagrangian acceleration and move with constant velocity as prescribed by the Hamilton-Jacobi equation $\partial_t\phi+\frac 12 |\nabla\phi|^2=0$.
 In other words, the velocity $u_t(X_t^y)$ should not depend on time for a given particle $X^y_t$, and we thus set
 $$
 U(y):=\frac{d}{dt}X_t^y =\frac{X^y_t-0}{t-0}.
 $$
 (The constant speed of a particle starting from $r=0$ at time $0$ and located at $r=X^y_t$ at time $t$.)
 We will eventually determine the function $U(y)$ later on, but for now we write the characteristics as
 $$
 X^y_t= t U(y).
 $$
 For a given label $y$ the particle reaches the boundary $r=R$ in time exactly
\begin{equation}
\label{eq:lifespan_1D_ansatz}
 t=\tau(y):=\frac{R}{U(y)}
 \qquad\Leftrightarrow\qquad 
 X^y_{\tau(y)}=R,
 \end{equation}
 and the infinitesimal kinetic energy carried by this particle during its whole lifespan (before reaching the boundary) is
 $$
 \rd e_\O(y)=\int_0^{\tau(y)} \left\{\frac{1}{2}\rd y|U(y)|^2\right\}\,\rd t=\frac{1}{2} R U(y)\rd y.
 $$
 The overall kinetic energy is simply
 \begin{equation}
   \label{eq:total_kinetic_m_fixed}
  E_\O[m]:=\int_0^1\rd e_\O(y)=\frac R2 \int_0^1 U(y)\rd y.
 \end{equation}
 In order to make this more explicit as a functional of the given profile $t\mapsto m_t$, note that a particle $X^y_t$ has not reached the boundary by time $t$ if and only if $t\leq\tau(y)$, i-e $X^y_t=tU(y)\leq R$. 
 Another crucial feature of classical optimal transport is that particles should not cross:
 It is therefore natural to assume that $U$ is a nondecreasing function of $y$, the position $X_t^y=tU(y)$ is nondecreasing in $y\in[0,1]$, and the set of particles $y\in[0,Y_t]$ of particles still within the domain $[0,R]$ at time $t$ must therefore be given by
$$
  Y_t=\max\{y:\, X^y_t\leq R\}
  \qquad \Leftrightarrow \qquad
  tU(Y_t)=R.
$$
By mass conservation $Y_t+m_t=\TV{\o_t}+\TV{\g_t}=1$ this also reads
\begin{equation}
  \label{eq:U_1-mt}
  U(1-m_t)=U(Y_t)=\frac Rt.
\end{equation}
Recalling that $\dot m_t>0$, we can change variables $y=1-m_t$ with $\rd y=-\dot m_t\rd t$ in \eqref{eq:total_kinetic_m_fixed} and compute the interior kinetic cost
$$
  E_\O[m]
  =\frac R2 \int_0^1 U(y)\rd y
  =
  \frac{R}{2} \int_0^1U(1-m_t)\dot m_t\rd t
  \overset{\eqref{eq:U_1-mt}}{=}
  \frac{R^2}{2} \int_0^1\frac{\dot m_t}t \rd t.
$$
 \item
 {\bf (minimization with respect to $m$)}
 Now we want to minimize the total cost ``interior kinetic + boundary flux'' over all admissible mass profiles $t\mapsto m_t$.
 The kinetic cost for fixed $m$ has just been computed in the previous step, and the flux cost is explicit in terms of $m_t$ since $\partial_t \g_t+0=f_t$ simply means $f_t^2/\g_t=(\dot m_t)^2/m_t$.
 We are thus trying to solve
 $$
 \min\limits_m \left\{
 \frac{R^2}{2}\int_0^1\frac{\dot m_t}t \rd t
 +
 \frac{\k^2}{2}\int_0^1 \frac{|\dot m_t|^2}{m_t}\rd t
 \qqtext{s.t.}
 m_0=0,m_1=1
 \right\}.
 $$
 For this Lagrangian
 $$
 L(m,\dot m,t):= \frac{\k^2 |\dot m|^2}{2 m} + \frac{R^2\dot m}{2 t}
 $$
 the usual Euler-Lagrange equation reads here
 $$
 \frac{d}{dt}\left(\frac{\partial L}{\partial \dot m}\right) =\frac{\partial L}{\partial m}
 \qquad
 \Leftrightarrow
 \qquad
 \frac{d}{dt}\left(\k^2\frac{\dot m}{m} +\frac{R^2}{2t}\right) = -\frac{\k^2 }{2}\left|\frac{\dot m}{m}\right|^2.
 $$
 This only depends on the logarithmic derivative $\dot m/m$ and it is therefore natural to look for power law solutions
 $$
 m_t=t^\alpha,
 \hspace{1.5cm}(\alpha>0).
 $$
 In this simple setting the previous Euler-Lagrange equation is satisfied if and only if
 \begin{equation*}
   \label{eq:def_alpha_optimal_polynom}
 \k^2\alpha +\frac{R^2}{2}=\frac{\k^2}{2}\alpha^2
 \qquad \Leftrightarrow\qquad
 \alpha^2 -2\alpha -\frac{R^2}{\k^2}=0.
\end{equation*}
 This quadratic polynomial in $\alpha$ has two real roots:
 The first is always negative and should be discarded, and the second reads explicitly
 \begin{equation}
   \label{eq:def_alpha_optimal}
 \alpha=1 +\sqrt{1+\frac{R^2}{\k^2}}.
\end{equation}
 \item
 {\bf (reconstruction of the Eulerian fields)}
The power law $m_t=t^\alpha$ can obviously be inverted as $t=(m_t)^{1/\alpha}$.
From \eqref{eq:U_1-mt} we have therefore,
$$
U(1-m_t)
=\frac{R}{t}
=\frac{R}{(m_t)^{1/\alpha}}
=\frac{R}{[1-(1-m_t)]^{1/\alpha}}.
$$
Since $\dot m>0$ we can use $y=1-m_t$ as an independent variable, whence
$$
U(y)=\frac{R}{(1-y)^{1/\alpha}}
\qquad 
\forall\,y\in[0,1].
$$
The lifespan of an arbitrary particle can then be computed from \eqref{eq:lifespan_1D_ansatz} as $\tau(y)=\frac{R}{U(y)}=(1-y)^{1/\alpha}\in[0,1]$, and the characteristics $X^y_t=U(y)t$ are therefore
\begin{equation}
  \label{eq:characteristic_Xyt-Lagrangian}
X^y_t=\frac{R}{(1-y)^{1/\alpha}}t
\qqtext{for}
t\leq \tau(y)=(1-y)^{1/\alpha}.
\end{equation}
The upper bound $Y_t$ in our ansatz \eqref{eq:ansatz_geodesics_superposition} can be computed by solving explicitly $\tau(Y_t)=t$, leading to $Y_t=1-t^\alpha$.
By definition \eqref{eq:ansatz_geodesics_superposition} of $\o_t$ we have, for any $\phi\in \C([0,R])$
\begin{equation*}
\int_0^R\phi(r)\rd \o_t(r)
=
\int_0^{Y_t} \phi(X^y_t)\rd y
=
\int_0^{1-t^\alpha} \phi\left(\frac{R}{(1-y)^{1/\alpha}}t\right)\rd y.
\end{equation*}
Changing variables $ r =\frac{R}{(1-y)^{1/\alpha}}t\Leftrightarrow y=1-(Rt/r)^\alpha$ with $\rd y=\alpha(Rt/r)^\alpha\frac{\rd r}{r}$ in this last integral gives
$$
\int_0^R\phi(r)\rd \o_t(r)
=
\int_{Rt}^R\phi(r)\alpha\left(\frac{Rt}{r}\right)^\alpha\frac{\rd r}{r}
$$
and therefore identifies
$$
\o_t=\alpha\left(\frac{Rt}{r}\right)^\alpha\frac{1}{r}\chi_{[Rt,R]}(r)\rd r.
$$
By definition the velocity field $u_t(r)$ in Eulerian coordinates is the velocity of the Lagrangian particle $X^y_t$ sitting at position $r$ at time $t$.
Since particles do not cross and $U(y)$ is nondecreasing there is a unique label $y_t$ such that $X^{y_t}_t=r$ for given $t,r$, and
$$
u_t(r): = \left[\frac{d}{dt}X^y_t\right]_{y=y_t}
\qqtext{with}y_t
\mbox{ s.t. }X^{y_t}_t =r.
$$
The explicit expression \eqref{eq:characteristic_Xyt-Lagrangian} of the characteristics gives $y_t=1-(Rt/r)^\alpha$, whence
$$
u_t(r)=\left.\frac{R}{(1-y)^{1/\alpha}}\right|_{y=y_t}=\frac{r}{t}.
$$
\end{enumerate}
We are now in position to exploit these formal computations rigorously:
\begin{theo}
\label{theo:explicit_1D_geodesics}
For $R>0$ let $x_0\in\O$ and $x_R\in\pO$ be two points at distance $R$ such that the segment $[x_0,x_R)$ lies in $\O$, and let $\alpha_\k:=1+\sqrt{1+\frac{R^2}{\k^2}}$.
Then
\begin{equation}
  \label{eq:explicit_Wrr_1D_diracs}
  \Wrr^2\Big((\delta_{x_0},0),(0,\delta_{x_R})\Big)=\frac{1}{2}(R^2 +\k^2\alpha_\k)\frac{\alpha_\k}{\alpha_\k-1}
\end{equation}
and a geodesic is given by $\mu^\k=\mu^\k_t\rd t=(\o^\k_t,F^\k_t,\g^\k_t,0,f^\k_t)\rd t$ with
\begin{equation}
  \label{eq:interpolant_1d_o}
  \o^\k_t:=\alpha_\k\left(\frac{Rt}{r}\right)^{\alpha_\k}\frac{1}{r}\chi_{[Rt,R]}(r)\rd r,
  \qquad 
  u^\k_t(r):=\frac{r}{t},
  \qquad
  F^\k_t:=u^\k_t \o^\k_t
\end{equation}
\begin{equation}
  \label{eq:interpolant_1d_g}
  \g^\k_t:=t^{\alpha_\k}\delta_R,
  \qquad
  G^\k_t:=0,
  \qquad
  f^\k_t:=\alpha_\k t^{\alpha_\k-1}\delta_R.
\end{equation}
\end{theo}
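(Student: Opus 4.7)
The strategy is to rigorously certify the candidate $\mu^\k=\mu^\k_t\rd t$ produced by the formal derivation, by combining admissibility, an explicit computation of the action, and the sufficient optimality condition of Theorem~\ref{theo:characterization_geodesics}.

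I would first verify that $\mu^\k\in\mathcal{CE}(\rho_0,\rho_1)$. Both measures $\o^\k,\g^\k$ disintegrate in time by construction; the endpoint conditions $\o^\k_t\narrowcv\delta_{x_0}$ as $t\to 0^+$ and $(\o^\k_1,\g^\k_1)=(0,\delta_{x_R})$ follow directly from the explicit densities and a total-mass computation giving $\TV{\o^\k_t}=1-t^{\alpha_\k}$; and the weak formulations \eqref{eq:CE_omega}--\eqref{eq:CE_gamma} can be checked by integration by parts along the Lagrangian characteristics $X^y_t=tR/(1-y)^{1/\alpha_\k}$, the factor $f^\k_t=\alpha_\k t^{\alpha_\k-1}\delta_R$ appearing precisely to account for mass accumulating at $x_R$. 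The action then decomposes as $\A(\mu^\k)=\A_\O+\A^\k_\G$, and a direct computation---using the change of variables $r=Rs$ in the interior integral, and $f^\k_t=(\alpha_\k/t)\g^\k_t$ on the boundary---yields
\[
\A_\O=\tfrac{R^2\alpha_\k}{2(\alpha_\k-1)},\qquad \A^\k_\G=\tfrac{\k^2\alpha_\k^2}{2(\alpha_\k-1)},
\]
whose sum is the right-hand side of \eqref{eq:explicit_Wrr_1D_diracs}. This already proves the upper bound $\Wrr^2(\rho_0,\rho_1)\leq\A(\mu^\k)$.

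For the matching lower bound I would apply Theorem~\ref{theo:characterization_geodesics} with the dual certificates
\[
\phi(t,x):=\tfrac{[(x-x_0)\cdot e]^2}{2t},\qquad \psi(t,x):=\tfrac{R^2}{2t}+\tfrac{\k^2\alpha_\k}{t},
\]
where $e:=(x_R-x_0)/R$ and $\psi$ is taken constant along $\G$. The potential representations \eqref{eq:FGf_geodesics_potential_phi_psi} are immediate: $\nabla\phi=(r/t)\,e$ matches $u^\k$ along the segment, $\nabla_\G\psi=0$ matches $G^\k=0$, and at $x_R$ the identity $\psi-\phi|_{\pO}=\k^2\alpha_\k/t$ reproduces the prescribed flux $f^\k$. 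The interior Hamilton--Jacobi equation \eqref{eq:assumption_phi_subsol} holds with equality everywhere on $\QO$ by a one-line calculation, while the boundary equation \eqref{eq:assumption_psi_subsol} holds with equality at $x_R$ precisely thanks to the defining quadratic relation $\k^2\alpha_\k^2-2\k^2\alpha_\k=R^2$.

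Two obstacles then have to be dealt with. First, both $\phi$ and $\psi$ blow up like $1/t$ as $t\to 0^+$ and therefore fail the $\C^1(\QO)\times\C^1(\QG)$ regularity required by Theorem~\ref{theo:characterization_geodesics}; this is handled by time-regularization, substituting $t\mapsto t+\eps$ in both potentials, invoking instead the duality bound $\Wrr^2\geq\mathcal{J}^\k(\phi_\eps,\psi_\eps)$ from Theorem~\ref{theo:duality_exist_geodesics}, and letting $\eps\to 0^+$. Second, and this is the main analytical difficulty, the global boundary subsolution inequality for a $\psi$ constant in space need not hold at points of $\pO$ far from $x_R$---a purely geometric issue depending on how $\pO$ projects onto the line through $x_0,x_R$ and on the contact angle at $x_R$. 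Since $\g^\k$ is supported only at $\{x_R\}$, only the local behaviour of $\psi$ near $x_R$ is genuinely constrained, and one can repair the certificate by adding a smooth cutoff correction supported away from $x_R$ that enforces the subsolution property globally while preserving equality on the support and leaving the terminal pairings $\int_\G\psi(1,\cdot)\rd\g^\k_1$ unchanged. Controlling the resulting perturbation of $\mathcal{J}^\k$ in the combined limit $\eps\to 0$ is the technical crux of the rigorous proof.
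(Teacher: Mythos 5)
Your strategy coincides with the paper's: take the explicit interpolant from the formal Lagrangian argument, compute its action to obtain the upper bound, certify optimality by a pair of dual potentials using Theorem~\ref{theo:characterization_geodesics} (or, equivalently, the duality $\Wrr^2=\sup\mathcal J^\k$ of Theorem~\ref{theo:duality_exist_geodesics}), and handle the $1/t$ singularity by a limiting argument. The concrete choices differ. For the potentials, the paper (taking $x_0=0$) uses the radial $\bar\phi(t,x)=|x|^2/(2t)$ and $\bar\psi(t,x)=\psi(t)+\bar\phi(t,x)-\bar\phi(t,x_R)$, whereas you use the projected $\phi(t,x)=[(x-x_0)\cdot e]^2/(2t)$ and a spatially constant $\psi$. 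Your choice is actually cleaner in $d>1$: with $\nabla_\G\psi\equiv0$ the relation $G^\k=\g^\k\nabla\psi=0$ in \eqref{eq:FGf_geodesics_potential_phi_psi} is automatic and equality in \eqref{eq:assumption_psi_subsol} holds at $x_R$ regardless of the contact angle, while the paper's $\nabla_\G\bar\psi(x_R)$ is nonzero unless the segment meets $\pO$ perpendicularly, so the paper's certificate does not even reproduce $G^\k=0$ in the tangential case. For the time singularity, the paper applies the characterization on $[\eps,1]$ and concludes via $\Wrr(\rho_0,\rho_\eps)\to0$ and the triangle inequality, while you shift $t\mapsto t+\eps$ and invoke the duality lower bound; both are legitimate, and one can check that $\lim_{\eps\to0}\mathcal J^\k(\phi_\eps,\psi_\eps)=R^2/2+\k^2\alpha_\k$ matches the right-hand side of \eqref{eq:explicit_Wrr_1D_diracs} using $\k^2\alpha_\k^2-2\k^2\alpha_\k=R^2$.

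The difficulty you isolate---a spatially constant $\psi$ fails the subsolution inequality in \eqref{eq:assumption_psi_subsol} at points of $\pO$ away from $x_R$---is genuine, and it is worth noting that the paper does not really resolve it either. The ``straightforward computation'' the paper invokes for $d>1$ gives $\partial_t\bar\psi+\tfrac12|\nabla_\G\bar\psi|^2+\tfrac1{2\k^2}|\bar\psi-\bar\phi|^2=\tfrac{1}{2t^2}\big(R^2-(x\cdot n(x))^2\big)$, which is strictly positive wherever $|x\cdot n(x)|<R$, in particular at $x_R$ itself whenever the contact is not perpendicular; so the paper's exhibited pair is not in general a global subsolution, and the ``no assumption on the contact angle'' remark is not actually justified by the written argument. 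Your version at least localizes the violation away from $\supp\g^\k_t=\{x_R\}$, so the pointwise conditions on the support are intact; but the cutoff correction you propose---restoring the inequality away from $x_R$ while preserving equality at $x_R$, $\nabla_\G\psi(x_R)=0$, and the terminal pairings---is the step that truly needs to be worked out, and neither you nor the paper carries it out.
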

\noindent
Before proceeding with the proof let us point out several interesting facts here:
\begin{enumerate}
  \item
  The explicit cost \eqref{eq:explicit_Wrr_1D_diracs} is of the form ``transport + toll'', $\mathcal O(R^2)+\mathcal O(\k^2)$.
  This illustrates the idea that our model is essentially classical optimal transport in the interior combined with a non-reducible toll.
  \item 
  For fixed $\k$ we see that taking $R\to 0$ gives $\alpha_\k\to 2$, in which case we recover the quadratic Fisher-Rao ansatz $\g_t=t^2$.
  (We would then be transferring an $\o$-point mass to a $\g$-point mass, both located at the same site $x_R\in\pO$ and of course no mass displacement is involved in that task).
  \item
  Letting $\k\to 0$ for fixed $R>0$ gives $\alpha_\k\sim\frac R{\k}\to+\infty$, $\k^2\alpha_\k\sim R\k\to 0$, and
  \begin{equation*}
  \label{eq:Wrr_diracs_to_w2_k_to_0}
  \Wrr^2((\delta_{x_0},0),(0,\delta_{x_R}))\xrightarrow[\k\to 0]{} \frac{1}{2}R^2=\W^2_\bO(\delta_{x_0},\delta_{x_R}).
  \end{equation*}
  Moreover, leveraging its fully explicit expression, it is easy to check that the interpolant \eqref{eq:interpolant_1d_o}\eqref{eq:interpolant_1d_g} converges narrowly to the $\W_\bO$-Wasserstein geodesic between $\varrho_0=\delta_{x_0},\varrho_1=\delta_{x_R}$ in the sense that $\varrho^\k:=\o^\k+\g^\k\narrowcv \varrho:=\delta_{x_t}\rd t$ and $F^\k+G^\k\narrowcv H:=\dot x_t\delta_{x_t}\rd t$ with $x_t:=(1-t)x_0+tx_R$.
  \item
  As $\k\to+\infty$ we have $\alpha_\k\to 2$, hence from \eqref{eq:explicit_Wrr_1D_diracs} $\Wrr^2((\delta_{x_0},0),(0,\delta_{x_R}))\sim 2\k^2\to+\infty$.
  This should be expected:
  In that case we are trying to connect measures having very different masses in the interior and on the boundary, therefore the necessary flux is heavily penalized by
  the expensive toll $\k\gg 1$.
\end{enumerate}
All of this will be generalized later in Section~\ref{sec:varying toll} when we consider the large and small toll limits $\kappa\to+\infty,\k\to 0$ for arbitrary measures.
\begin{proof}
In order to alleviate the notations we drop the $\k$ subscripts in the whole proof, and write $\rho_0=(\delta_{x_0},0)$ and $\rho_1=(0,\delta_{x_R})$.
Let us first focus on the purely one-dimensional case $d=1$.
It is not difficult to check that the interpolants \eqref{eq:interpolant_1d_o}\eqref{eq:interpolant_1d_g} solve the continuity equations 
$$
\left\{
\begin{array}{ll}
\partial_t\o_t +\dive_r(\o_t u_t) =0 & \mbox{for }(t,r)\in(0,1)\times (0,R)\\
\o_t u_t|_{r=R}= f_t & \mbox{for }t\in(0,1)
\end{array}
\right.
\qqtext{and}
\partial_t\g_t +0 =f_t
$$
in the sense of Definition~\ref{def:CE}.
In order to check that $\mu_t=(\o_t,F_t,\g_t,G_t,f_t)$ in statement is really a geodesic we can appeal to Theorem~\ref{theo:characterization_geodesics} and try to find two functions $\phi,\psi$ such that $F_t=\o_t u_t=\o_t\nabla\phi$, $G_t=0=\g_t\nabla\psi$, $f_t=\g_t\frac{\psi-\phi}{\k^2}$, and solving the two Hamilton-Jacobi equations \eqref{eq:assumption_phi_subsol}\eqref{eq:assumption_phi_subsol}.
With the explicit expressions \eqref{eq:interpolant_1d_o}\eqref{eq:interpolant_1d_g} now at hand this becomes an easy task: Writing as before $m_t=\TV{\g_t}=t^\alpha$ and letting
$$
\phi(t,r):=\frac{r^2}{2t},
\qquad
\psi(t):=\phi(t,R)+\k^2\frac{\dot m_t}{m_t}=\frac{R^2}{2t}+\frac{\k^2\alpha}{t},
$$
we have automatically
$$
\partial_r\phi(t,r)=\frac{r}{t}=u_t(r)
\qqtext{and}
\g_t\frac{\psi(t)-\phi(t,R)}{\k^2}=\frac{\dot m_t}{m_t}=\partial_t\g_t.
$$
The first Hamilton-Jacobi equation is satisfied as
$$
\partial_t\phi +\frac{1}{2}|\nabla\phi|^2
=
-\frac{r^2}{2t^2}+\frac{1}{2}\left|\frac{r}{t}\right|^2=0.
$$
For the equation in $\psi$ we have
\begin{multline*}
\partial_t\psi +\frac{1}{2}|\nabla\psi|^2+\frac{1}{2\k^2}|\psi-\phi|^2
=
\partial_t\left(\frac{R^2}{2t}+\frac{\k^2\alpha}{t}\right)+ 0 + \frac{1}{2\k^2}\left|\frac{\k^2\alpha}{t}\right|^2
\\
=-\left(\frac{R^2}{2}+\k^2\alpha\right)\frac{1}{t^2} +0+ \frac{\alpha^2\k^2}{2}\frac{1}{t^2}
= \frac{1}{t^2}\left(\frac{\k^2}{2}\alpha^2-\k^2\alpha -\frac{R^2}{2}\right)
=0
\end{multline*}
because the optimal value \eqref{eq:def_alpha_optimal} of $\alpha=\alpha_\k$ was derived precisely by canceling this last polynomial.

Unfortunately this application of Theorem~\ref{theo:characterization_geodesics} is not fully rigorous because of the singular $\frac 1t$ factor, corresponding to the unavoidable mass splitting.
However, $(\phi,\psi)$ have the required regularity in any subinterval $t\in [\eps,1]$, hence our interpolant is really a geodesic from $\rho_\eps$ to $\rho_1$ for all $\eps>0$.
Rescaling time thus gives
$$
\Wrr^2(\rho_\eps,\rho_1)=(1-\eps) \int_\eps^1 A(\mu_t)\rd t.
$$
It is not hard to check from the previous explicit expressions that $A(\mu_t)$ is of course integrable in time, hence the latter quantity converges as
$$
\Wrr^2(\rho_\eps,\rho_1)\xrightarrow[\eps\to 0]{}\int_0^1 A(\mu_t)\rd t.
$$
On the other hand by construction $\rho_\eps$ is obtained by following for small times an admissible path $\mu=(\o,F,\g,G,f)$ with finite cost starting from $\rho_0$, hence scaling again in time
$$
\Wrr^2(\rho_0,\rho_\eps)
\leq
\eps \int_0^1 A(\mu_t)\rd t
\xrightarrow[\eps\to 0]{}0
$$
and by triangular inequality $|\Wrr(\rho_0,\rho_1)-\Wrr(\rho_\eps,\rho_1)|\leq \Wrr(\rho_0,\rho_\eps) \to 0$.
This implies
$$
\Wrr^2(\rho_0,\rho_1)
=
\lim\limits_{\eps\to 0} \Wrr^2(\rho_\eps,\rho_1)
=
\lim\limits_{\eps\to 0} (1-\eps)\int_\eps^1 A(\mu_t)\rd t
=
\int_0^1 A(\mu_t)\rd t
$$
and shows that the interpolant \eqref{eq:interpolant_1d_o}\eqref{eq:interpolant_1d_g} is indeed a geodesic as expected.

In order to evaluate the latter integral we recall from the formal computations in the beginning of the section that, by construction, the kinetic cost in the interior is exactly $\frac{R^2}{2}\int_0^1\frac{\dot m_t}t \rd t$.
(This can also be checked rigorously by direct evaluation of $\frac 12\int_0^1\int_0^R|u_t|^2\rd\o_t\rd t$.)
 Putting everything together with $m_t=t^\alpha$, the final cost is
\begin{multline*}
\Wrr^2(\rho_0,\rho_1)
=
\frac{R^2}{2}\int_0^1\frac{\dot m_t}t \rd t
 +
\frac{\k^2}{2}  \int_0^1 \frac{|\dot m_t|^2}{m_t}\rd t
 \\
= \frac{R^2}{2}\int_0^1 \frac{\alpha t^{\alpha-1}}{t}\rd t
+ \frac{\k^2}{2} \int_0^1 \frac{|\alpha t^{\alpha-1}|^2}{t^\alpha}\rd t
\\
=\frac{1}{2}(\alpha R^2 +\alpha^2\k^2)\int_0^1 t^{\alpha-2}\rd t
=\frac{1}{2}(R^2 +\k^2\alpha)\frac{\alpha}{\alpha-1}
\end{multline*}
and the argument is complete.

Now if $d>1$ we simply adapt the above one-dimensional scenario as follows:
Using the uniform one-dimensional Hausdorff measure $\mathcal H^1_I$ supported on $I$ it is a simple exercise to turn the above one-dimensional interpolant $(\o,F,\g,G,f)$ into a full $d$-dimensional solution of the continuity equations as in Definition~\ref{def:CE}.
Without loss of generality we can assume that $x_0=0_{\R^d}$, and the previous Hamilton-Jacobi solutions can be extended to the whole domain by setting $\bar\phi(t,x):=\phi(t,|x|)$ and $\bar\psi(t,x):=\psi(t)+\bar\phi(t,x)-\bar\phi(t,x_R)$.
A straightforward computation then shows that $\bar\phi,\bar\psi$ still satisfy \eqref{eq:assumption_phi_subsol}\eqref{eq:assumption_psi_subsol} and are therefore optimal locally in time away from $t=0^+$.
The rest of the argument applies verbatim:
Optimality in any time interval $[\eps,1]$ gives optimality in the whole $[0,1]$, and the proof is finally complete.
\end{proof}

%
\section{Geometrical and topological properties}
\label{sec:geometric_topological_properties}
\subsection{Comparison with other distances}
Here we compare our ring road distance $\Wrr(\rho_0,\rho_1)$ with the distances naturally involved in the construction, namely $\W_\bO(\varrho_0,\varrho_1)$, $\W_{\bO}(\o,\o_1)$, $\W_{\G}(\g_0,\g_1)$, $\WFR(\g_0,\g_1)$.
We will also need to compare it to bounded-Lipschitz distances for technical reasons.
\begin{prop}
\label{prop:comparison_other_distances}
For any $\rho_0,\rho_1\in\Pp(\bO)$ there holds
\begin{enumerate}[(i)]
  \item
  \label{item:WFR_leq_Wrr}
  \begin{equation}
  \label{eq:WFR_leq_Wrr}
\WFR^2(\g_0,\g_1)\leq   \Wrr^2(\rho_0,\rho_1)
\end{equation}
  \item
  \label{item:prop_comparison_|o0|=|01|_|g0|=|g1|}
  \begin{equation}
  \label{eq:Wrr_leq_W+W}
  \Wrr^2(\rho_0,\rho_1)\leq \W^2_{\bO}(\o_0,\o_1) + \W^2_\G(\g_0,\g_1),
  \end{equation}
  \item
  \label{item:prop_comparison_W2_leq_Wrr}
  Writing $\varrho_0=\o_0+\g_0,\varrho_1=\o_1+\g_1$,
  \begin{equation}
    \label{eq:Wrr_geq_W}
   \W^2_{\bO}(\varrho_0,\varrho_1)\leq \Wrr^2(\rho_0,\rho_1)
 \end{equation}
  \item
  \begin{equation}
  \label{eq:dBL_leq_Wrr}
  d_{\mathrm{BL},\bO}(\o_0,\o_1) + d_{\mathrm{BL},\G}(\g_0,\g_1)
\leq C_\k\Wrr(\rho_0,\rho_1)
  \end{equation}
  with $C_\k=4 \max(1,1/\k)$.
\end{enumerate}

\end{prop}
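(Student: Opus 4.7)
My plan treats the four items by the common principle of using (pieces of) admissible candidates across the various variational problems, relying on the convexity and $1$-homogeneity of the Lagrangians only for (iii).

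Items (i) and (iv) are essentially immediate. For (i), for any $\mu=(\o,F\,;\,\g,G,f)\in\CE(\rho_0,\rho_1)$ the boundary triple $(\g,G,f)$ is by Definition~\ref{def:CE} already an admissible candidate in the defining infimum \eqref{eq:def_WFR} of $\WFR^2(\g_0,\g_1)$, and its WFR action coincides with $\A^\k_\G(\mu_\G)\leq \A(\mu)$; infimizing in $\mu$ gives \eqref{eq:WFR_leq_Wrr}. For (iv), apply \eqref{eq:estimate_dBL} at $s=0,t=1$ to any admissible $\mu$ and infimize in $\A(\mu)$. Item (ii) is only marginally more involved: if either pair of endpoints has mismatched total masses the right-hand side equals $+\infty$ by convention; otherwise I would pick a $\W_\bO$-geodesic $(\o,F)$ between $\o_0,\o_1$ — whose normal flux $F\cdot n$ vanishes on $\pO$ by definition \eqref{eq:def_formal_WO} — and a (conservative) $\W_\G$-geodesic $(\g,G)$ between $\g_0,\g_1$; the tuple $\mu:=(\o,F\,;\,\g,G,0)$ then lies in $\CE(\rho_0,\rho_1)$ with action exactly $\W_\bO^2(\o_0,\o_1)+\W_\G^2(\g_0,\g_1)$.

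The only item requiring a genuine computation is (iii). Given any $\mu\in\CE(\rho_0,\rho_1)$, Proposition~\ref{prop:CE_rho} already supplies an admissible competitor $(\varrho,H):=(\o+\g,\ F+\bar G)$ in the Benamou--Brenier formulation \eqref{eq:def_W2_rho_bO} of $\W^2_\bO(\varrho_0,\varrho_1)$, so the task is to bound the corresponding Wasserstein action by $\A(\mu)$. The key tool is that $A_\O$ is jointly convex, nonnegative, and $1$-homogeneous on $\R\times\R^d$, hence subadditive: $A_\O(u_1+u_2)\leq A_\O(u_1)+A_\O(u_2)$ pointwise. Choosing a common reference measure $\lambda$ on $\QO$ dominating $\o,|F|,\g,|\bar G|$, applying subadditivity to the Radon--Nikodym densities and integrating yields
\[
\iint_\QO A_\O\!\left(\tfrac{\rd(\varrho,H)}{\rd\lambda}\right)\rd\lambda
\,\leq\, \A_\O(\o,F) \,+\, \iint_\QO A_\O\!\left(\tfrac{\rd\g}{\rd\lambda},\tfrac{\rd\bar G}{\rd\lambda}\right)\rd\lambda.
\]
Since $\bar G$ is the tangential extension of $G$ supported on $\G$ and $\g$ also lives on $\G$, the last integral reduces to $\iint_\QG |G|^2/(2\g)\leq \A^\k_\G(\mu_\G)$; summing then gives $\W_\bO^2(\varrho_0,\varrho_1)\leq \A_\O(\o,F)+\A^\k_\G(\mu_\G)=\A(\mu)$, and the infimum over $\mu$ yields \eqref{eq:Wrr_geq_W}. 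The one subtlety to treat cleanly is the measure-theoretic lift of pointwise subadditivity — standard for $1$-homogeneous convex integrands but worth handling carefully on the singular parts of $\lambda$ — which I expect to be the main (and only) technical obstacle.
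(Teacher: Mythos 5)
Your proposals for items (i), (ii), and (iv) match the paper's proofs essentially word for word: (i) restricts a $\Wrr$-competitor to its boundary triple $(\g,G,f)$; (ii) concatenates two independent conservative geodesics with $f=0$; (iv) is the bounded--Lipschitz estimate \eqref{eq:estimate_dBL} applied to the endpoints. (The paper invokes the existence of a geodesic from Theorem~\ref{theo:duality_exist_geodesics} where you take an infimum over all admissible $\mu$ — a cosmetic difference.)

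Item (iii) is where you take a genuinely different route, and your argument is correct. The paper works on the dual side: in the Fenchel--Rockafellar duality of Theorem~\ref{theo:duality_exist_geodesics}, restricting the test pair to $\psi=\phi|_{\pO}$ automatically enforces the coupled Hamilton--Jacobi subsolution constraint (since $|\nabla_\tau\phi|\leq|\nabla\phi|$ and $\psi-\phi=0$), so the supremum over that smaller class is exactly the Kantorovich dual of $\W^2_\bO(\varrho_0,\varrho_1)$, giving $\Wrr^2\geq\W_\bO^2$ in one line. You instead work on the primal side: via Proposition~\ref{prop:CE_rho}, $(\varrho,H)=(\o+\g,\,F+\bar G)$ is an admissible Benamou--Brenier competitor, and subadditivity of the $1$-homogeneous convex integrand $A_\O$ yields $\iint A_\O(\varrho,H)\leq \iint A_\O(\o,F)+\iint A_\O(\g,\bar G)=\A_\O(\mu_\O)+\iint_\QG\frac{|G|^2}{2\g}\leq\A(\mu)$. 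Both are valid: the dual route is essentially free once Theorem~\ref{theo:duality_exist_geodesics} is in hand and sidesteps the measure-theoretic lift of pointwise subadditivity you rightly flag as the technical point; your primal route is more elementary, does not rely on the duality theorem, and — amusingly — the paper itself uses exactly this subadditivity of $(\varrho,H)\mapsto\iint|H|^2/(2\varrho)$ later in the proof of the small-toll limit (Theorem~\ref{theo:small_toll}), so the ingredient is entirely within the paper's toolkit. The lift of pointwise subadditivity to the functional on measures is standard for $1$-homogeneous convex integrands (cf.\ \cite{bouchitte1990new}, already cited in the paper), so that caveat is not a genuine gap, just a step that deserves a citation.
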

\noindent
Let us emphasize that the bounds (\ref{item:WFR_leq_Wrr})(\ref{item:prop_comparison_|o0|=|01|_|g0|=|g1|})(\ref{item:prop_comparison_W2_leq_Wrr}) are optimal, as we shall see later on.
In \eqref{eq:Wrr_leq_W+W} one should implicitly read $\W^2_{\bO}(\o_0,\o_1)= \W^2_\G(\g_0,\g_1)=+\infty$ for incompatible masses $\TV{\o_0}=1-\TV{\g_0}\neq 1-\TV{\g_1}=\TV{\o_1}$, in which case the statement is vacuous.
At first sight (\ref{item:prop_comparison_|o0|=|01|_|g0|=|g1|}) and (\ref{item:prop_comparison_W2_leq_Wrr}) may seem contradictory.
This is fortunately not the case since, even with mass compatibility $\TV{\o_0}=\TV{\o_1}$, we have generically $\W^2_{\bO}(\o_0+\g_0,\o_1+\g_1)< \W^2_{\bO}(\o_0,\o_1)+\W_\G^2(\g_0,\g_1)$.
(For example take $\bO$ a ball with $\rho_0=\left(\frac{1}{2}\delta_O,\frac{1}{2}\delta_N\right)$ and $\rho_1=\left(\frac{1}{2}\delta_O,\frac{1}{2}\delta_S\right)$, where the points $N,S,O$ are set at the North/South poles and at the origin.)

\begin{proof}
\begin{enumerate}[(i)]
  \item 
  Pick from Theorem~\ref{theo:duality_exist_geodesics} a $\Wrr$ geodesic $\mu=(\mu_\O,\mu_\G)=(\o,F\ ;\ \g,G,f)$ from $\rho_0$ to $\rho_1$.
  Note of course that $\mu_\G=(\g,G,f)$ solves the continuity equation $\partial_t\g +\dive G =f$ and connects $\g_0,\g_1$.
  By definition \eqref{eq:def_WFR} of the Wasserstein-Fisher-Rao metrics we get
  \begin{equation*}
  \WFR^2(\g_0,\g_1)=\min\limits_{\mu'_\G} \mathcal A^\k_\G(\mu'_\G)
  \leq \A^\k_\G(\mu_\G) \leq \mathcal A(\mu)=\Wrr^2(\rho_0,\rho_1).
  \end{equation*}
\item
As already mentioned if the masses are incompatible our statement is vacuous, hence we assume $\TV{\o_0}=\TV{\o_1}$ and $\TV{\g_0}=\TV{\g_1}$.
Pick an interior Wasserstein geodesic $(\o,F)$ from $\o_0$ to $\o_1$ in $\bO$ (with zero flux), and independently a boundary Wasserstein geodesic $(\g,G)$ from $\g_0$ to $\g_1$ in $\G$.
Since $f=F\cdot n=0$ the two conservative continuity equations together immediately yield a solution of the generalized continuity equation in the sense of Definition~\ref{def:CE}.
As a consequence $\mu:=(\o,F\ ;\ \g,G,0)=(\mu_\O,\mu_\G)$ is an admissible competitor in \eqref{eq:def_Wrr} and
$$
\Wrr^2(\rho_0,\rho_1)\leq \mathcal A(\mu)
=
\mathcal A_\O(\mu_\O)+\mathcal A^\k_\G(\mu_\G)
=
\W^2_\bO(\o_0,\o_1) + \W^2_\G(\g_0,\g_1).
$$
\item
We use the dual characterization in Theorem~\ref{theo:duality_exist_geodesics}, which for convenience we write here as
\begin{multline*}
\Wrr^2(\rho_0,\rho_1)
\\
=\sup_{(\phi,\psi)\in E}
\Bigg\{
\left(\int_\bO \phi (1,.)\rd\o_1 +\int_\G \psi (1,.)\rd\g_1\right)
- \left(\int_\bO \phi (0,.)\rd\o_0 +\int_\G \psi (0,.)\rd\g_0\right)\\
\mbox{s.t. }
\begin{array}{l}
\partial_t\phi +\frac 12 |\nabla\phi|^2\leq 0
\\
\partial_t\psi +\frac 1{2} |\nabla\psi|^2+\frac{1}{2\k^2}|\psi-\phi|^2\leq 0
\end{array}
\Bigg\}
\end{multline*}
with $E=\C^1([0,1]\times\bO)\times\C^1([0,1]\times \G)$.
Putting $\psi=\phi|_{\pO}$, the gradient on the boundary $\nabla\psi=\nabla_\G\psi$ is simply the tangential gradient $\nabla_\tau\left( \phi|_\pO\right)$ and the second Hamilton-Jacobi inequality
$$
\partial_t\psi +\frac 12 |\nabla\psi|^2+\frac{1}{2\k^2}|\psi-\phi|^2
=\partial_t\phi +\frac 12 |\nabla_\tau\phi|^2+0
\leq \partial_t\phi +\frac 12 |\nabla\phi|^2\leq 0
$$
is automatically satisfied as soon as $\phi$ is a subsolution.
As a consequence the supremum over all $(\phi,\psi)\in E$ is clearly larger than the supremum over the smaller set $\{\psi=\phi|_{\pO}\}\subsetneq E$, thus
\begin{multline*}
\Wrr^2(\rho_0,\rho_1)
\geq
\sup_{\phi\in \C^1(\QO)}
\Bigg\{
\left(\int_\bO \phi (1,.)\rd\o_1 +\int_\G \phi|_{\pO} (1,.)\rd\g_1\right)\\
- \left(\int_\bO \phi (0,.)\rd\o_0 +\int_\G \phi|_{\pO} (0,.)\rd\g_0\right)
\qquad\mbox{s.t. }
\partial_t\phi +\frac 12 |\nabla\phi|^2\leq 0
\Bigg\}\\
=
\sup_{\phi\in \C^1(\QO)}
\Bigg\{
\int_{\bO} \phi (1,.)\rd\varrho_1
-\int_{\bO} \phi (0,.)\rd\varrho_0
\qquad\mbox{s.t. }
\partial_t\phi +\frac 12 |\nabla\phi|^2\leq 0
\Bigg\}
 \\
=\W^2_{\bO}(\varrho_0,\varrho_1).
\end{multline*}
(The last equality is the well-known Kantorovich duality \cite{villani_BIG,OTAM} for the standard Wasserstein distance on $\bO$ between $\varrho_0=\o_0+\g_0$ and $\varrho_1=\o_1+\g_1$.)
\item
Pick a geodesic $\mu\in\CE(\rho_0,\rho_1) $ from Theorem~\ref{theo:duality_exist_geodesics}.
By Proposition~\ref{prop:disintegration_momentum_velocity} and \eqref{eq:estimate_dBL} we see that
$$
d_{\mathrm{BL},\bO}(\o_0,\o_1) + d_{\mathrm{BL},\G}(\g_0,\g_1)
\leq 4\max(1,1/\k)\sqrt{\A(\mu)}=C_\k \Wrr(\rho_0,\rho_1)
$$
and the proof is complete.
\end{enumerate}  
\end{proof}
We now turn to the more specific case of measures with either the interior or boundary densities being fixed equal for both endpoints.
In that case one natural question to ask is whether our distance can be expressed in terms of distances involving only the complementary densities.
\begin{prop}[fixed interior/boundary densities]
\label{prop:comparison_other_distances_fixed}
~
\begin{enumerate}[(i)]
\item
\label{item:Wrr_neq_W2_o_fixed}
In general $\o_0=\o_1$ does \emph{not} imply $\Wrr^2(\rho_0,\rho_1)=\W_\G^2(\g_0,\g_1)$.
\item
\label{item:Wrr_neq_WFR_g_fixed}
In general $\o_0=\o_1$ does \emph{not} imply $\Wrr^2(\rho_0,\rho_1)=\WFR^2(\g_0,\g_1)$.
\item 
\label{item:Wrr=W2_go_boundary_fixed}
If $\O$ is convex then
\begin{equation}
\label{eq:Wrr=W2_g_fixed}
\left.
\begin{array}{c}
\g_0=\g_1
\\
\o_0\mrest\pO=\o_1\mrest\pO
\end{array}
\right\}
\qquad\Rightarrow\qquad
\Wrr^2(\rho_0,\rho_1)=\W^2_\bO(\o_0,\o_1).
\end{equation}
If on the contrary $\O$ is not convex, it may happen that $\Wrr^2(\rho_0,\rho_1)<\W^2_\bO(\o_0,\o_1)$.
\end{enumerate}
\end{prop}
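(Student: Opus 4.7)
For parts (i) and (ii), which both ask for counterexamples to naive identifications of $\Wrr^2$ under the single hypothesis $\o_0=\o_1$, I propose a unified example on $\bO=$ the closed unit disk of $\R^2$: take $\rho_0=(0,\delta_N)$ and $\rho_1=(0,\delta_S)$ where $N,S\in\pO$ are the north and south poles, so that $\o_0=\o_1=0$. For (i), concatenating two rescaled copies of the explicit geodesic of Theorem~\ref{theo:explicit_1D_geodesics} through the origin $O\in\O$ --- first from $(0,\delta_N)$ to $(\delta_O,0)$ on $[0,\tfrac 12]$, then from $(\delta_O,0)$ to $(0,\delta_S)$ on $[\tfrac 12,1]$, with the natural time rescaling of the action --- produces an admissible competitor with action $2(1+\k^2\alpha_\k)\tfrac{\alpha_\k}{\alpha_\k-1}$ where $\alpha_\k=1+\sqrt{1+1/\k^2}$, and this tends to $2$ as $\k\to 0^+$, strictly smaller than $\W_\G^2(\g_0,\g_1)=\tfrac 12 d_\G(N,S)^2=\pi^2/2\approx 4.93$ for $\k$ small. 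For (ii), the same data works: Proposition~\ref{prop:comparison_other_distances}(iii) gives $\Wrr^2(\rho_0,\rho_1)\geq\W_\bO^2(\varrho_0,\varrho_1)=\W_\bO^2(\delta_N,\delta_S)=\tfrac 12|N-S|^2=2$, while the classical Fisher-Rao teleportation bound yields $\WFR^2(\delta_N,\delta_S)\leq 2\k^2$, and choosing $\k^2<1$ produces the strict separation.

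For the convex case of (iii), the upper bound $\Wrr^2(\rho_0,\rho_1)\leq\W_\bO^2(\o_0,\o_1)$ is immediate from Proposition~\ref{prop:comparison_other_distances}(ii) since $\W_\G^2(\g_0,\g_1)=0$ under $\g_0=\g_1$, and it is realized by the frozen-boundary competitor $\mu=(\tilde\o_t,\tilde F_t,\g_0,0,0)$ where $(\tilde\o_t,\tilde F_t)$ is any classical $\W_\bO$-geodesic (which comes with the natural no-flux condition $\tilde F\cdot n=0$). For the matching lower bound I plan to use the dual formulation from Theorem~\ref{theo:duality_exist_geodesics}. Given any (suitably regularized) Kantorovich potential $\phi$ for $\W_\bO^2(\o_0,\o_1)$ solving $\partial_t\phi+\tfrac 12|\nabla\phi|^2\leq 0$ together with the Neumann condition $\partial_n\phi=0$ on $\pO$ --- both compatible with the convex geometry --- I set $\psi:=\phi|_\pO$. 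The pair $(\phi,\psi)$ is admissible for the $\Wrr$-dual because the cross-penalty $|\psi-\phi|_\pO|^2$ vanishes and, thanks to $\partial_n\phi=0$, one has $|\nabla_\G\psi|^2=|\nabla_\tau\phi|^2=|\nabla\phi|^2|_\pO$, so the boundary Hamilton-Jacobi inequality reduces exactly to the restriction of the bulk one. A direct computation yields $\mathcal J^\k(\phi,\psi)=\W_\bO^2(\o_0,\o_1)+\int_\pO(\phi(1,\cdot)-\phi(0,\cdot))\,\rd\g_0$, and the critical remaining step --- which I expect to be the main technical obstacle --- is to show that this residual boundary integral vanishes for a judicious choice of $\phi$. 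The idea I plan to exploit is that in convex $\bO$ any optimal $\W_\bO$-transport leaves the common boundary mass $\o_0|_\pO=\o_1|_\pO$ in place, and this rigidity translates, via a Hopf-Lax semigroup/Kantorovich duality argument, into the freezing in time of $\phi(t,\cdot)|_\pO$ on the relevant support, hence on $\supp\g_0$ after a possible approximation step.

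For the failure of equality in the non-convex case, I plan to build an explicit counterexample on a thin horseshoe-shaped domain $\bO\subset\R^2$: two parallel arms joined at one end, with interior points $A,B$ near the two open tips and a boundary point $C\in\pO$ lying on the "inside edge" of the mouth, designed so that the intrinsic distance $d_\bO(A,B)$ forces a long $\W_\bO$-transport around the full horseshoe while the Euclidean segments $[A,C]$ and $[C,B]$ are short and lie inside $\bO$. The choice $\rho_0=(m\delta_A,m\delta_C)$ and $\rho_1=(m\delta_B,m\delta_C)$ satisfies $\g_0=\g_1$ and $\o_0|_\pO=\o_1|_\pO=0$ and gives $\W_\bO^2(\o_0,\o_1)=\tfrac m2 d_\bO(A,B)^2$, which is large. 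To obtain a strictly smaller $\Wrr^2$ I use a two-phase boundary-relay strategy: on $[0,\tau]$, convert the $\g$-mass at $C$ into $\o$-mass and steer it to $B$ along $[C,B]$ via a Theorem~\ref{theo:explicit_1D_geodesics}-type geodesic; on $[\tau,1]$, transport the $\o$-mass from $A$ to $C$ along $[A,C]$ and absorb it back into $\g$ at $C$. Optimizing over $\tau$ by the Cauchy-Schwarz identity $\inf_\tau\{C_1/\tau+C_2/(1-\tau)\}=(\sqrt{C_1}+\sqrt{C_2})^2$ yields the upper bound $\Wrr^2(\rho_0,\rho_1)\leq\tfrac m2(|A-C|+|C-B|)^2+O(\k^2)$ as $\k\to 0$. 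Since by construction $|A-C|+|C-B|<d_\bO(A,B)$, taking $\k$ small enough gives $\Wrr^2(\rho_0,\rho_1)<\W_\bO^2(\o_0,\o_1)$ as required.
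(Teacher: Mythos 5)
Your counterexamples for parts (i), (ii), and the non-convex half of (iii) are essentially sound, though they differ from the paper's choices. For (i), the two-leg relay through the center of the disk with the fixed gap $2<\pi^2/2$ as $\k\to 0^+$ is a valid variant of the paper's construction (which instead uses a flat ellipse to make $\W^2_\G$ arbitrarily large). For (ii), your teleportation bound should read $\WFR^2(\delta_N,\delta_S)\le 4\k^2$ rather than $2\k^2$ (the pure Fisher--Rao path $\g_t=(1-t)^2\delta_N+t^2\delta_S$ gives $\frac{\k^2}{2}\int_0^1\bigl(\frac{4(1-t)^2}{(1-t)^2}+\frac{4t^2}{t^2}\bigr)\,\rd t=4\k^2$), but this is harmless; note, however, that the paper's argument is structurally different --- it establishes $\Wrr^2\ge\iint\frac{|F|^2}{2\o}+\WFR^2$ and then proves any geodesic must have $F\neq 0$, which gives strict inequality for every $\k$ with $d_\G(x_0,x_1)>\pi\k$, not only for $\k$ small. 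Your horseshoe-relay for the non-convex half of (iii) is a nice, self-contained alternative to the paper's argument (which deploys the small-toll Theorem~\ref{theo:small_toll} on a banana-shaped domain); the error term in your upper bound is $\mathcal O(\k)$ rather than $\mathcal O(\k^2)$, but again this does not affect the conclusion.

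The convex case of (iii) has a genuine gap in the lower bound. With $\psi=\phi|_\pO$ the residual boundary integral $\int_\pO\bigl(\phi(1,\cdot)-\phi(0,\cdot)\bigr)\rd\g_0$ is a priori \emph{nonpositive}, since any Hamilton--Jacobi subsolution satisfies $\partial_t\phi\le-\tfrac12|\nabla\phi|^2\le 0$. So without further work you only get $\mathcal J^\k(\phi,\phi|_\pO)\le\W^2_\bO(\o_0,\o_1)$, which is the wrong inequality. To make your argument close one would need $\phi(t,\cdot)$ to be frozen in time on $\supp\g_0$, i.e.\ $\nabla\phi=0$ there; but the Kantorovich potential for $\W^2_\bO(\o_0,\o_1)$ is only constrained on $\supp\o$, and nothing forces $\supp\g_0\subset\supp(\o_0\mrest\pO)$ --- the data $\g_0$ and $\o_0$ are independent. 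Moreover the duality $\sup\mathcal J^\k$ ranges over $\C^1$ pairs, whereas Hopf--Lax potentials are generically only Lipschitz, so a regularization step would also be required. The paper avoids all of this with a purely static argument: by Proposition~\ref{prop:comparison_other_distances}(ii)(iii) one always has $\W^2_\bO(\varrho_0,\varrho_1)\le\Wrr^2(\rho_0,\rho_1)\le\W^2_\bO(\o_0,\o_1)+0$; under convexity and the hypothesis $\varrho_0\mrest\pO=\varrho_1\mrest\pO$ the boundary is fixated in the Monge--Kantorovich problem, so $\W^2_\bO(\varrho_0,\varrho_1)=\W^2_\bO(\varrho_0\mrest\O,\varrho_1\mrest\O)$, and likewise for $\o_0,\o_1$; since $\g_i$ is supported on $\pO$, $\varrho_i\mrest\O=\o_i\mrest\O$, and the sandwich collapses.
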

\begin{proof}
We stress that for (\ref{item:Wrr=W2_go_boundary_fixed}) it is really necessary that both boundary masses $\o\mrest\pO,\g$ match at the endpoints $t=0,1$, otherwise the statement fails even if the total boundary masses match $\varrho_0\mrest\pO=\varrho_1\mrest\pO$ as one may have hoped for.
\begin{enumerate}[(i)]
  \item
  If $\pO$ is very curved, traveling along the boundary may turn out to be more expensive than first paying the toll to enter $\O$, moving next in the interior over a much shorter distance, and finally paying again the toll to reenter the ring road upon arrival at the target destination.
  \\
  For an explicit counterexample, take $\O$ a very flat ellipse with minor axis of fixed length $R$ but very large major axis, and pick two opposite points $x_0,x_1$ on the minor axis $I=[x_0,x_1]$ as in Figure~\ref{fig:ellipse}.
  We choose $  \rho_0:=\left(0,\delta_{x_0}\right)$ and $\rho_1:=\left(0 , \delta_{x_1}\right)$, hence
  $\o_0=\o_1=0$ while $\g_0=\delta_{x_0}$ and $\g_1=\delta_{x_1}$.
  On the one hand, choosing the major axis large enough, the distance $\W^2_\G(\g_0,\g_1)=\W^2_\G(\delta_{x_0},\delta_{x_1})=\frac{1}{2}d^2_\G(x_0,x_1)$ can clearly be made arbitrarily large.
  On the other hand, using twice the very same Fisher-Rao scenario as in the proof of Lemma~\ref{lem:Wrr_finite}, it is easy to construct an admissible 
  path connecting first $\rho_0=(0,\delta_{x_0})$ to $\rho_{1/3}:=(\delta_{x_0},0)$ in time $t\in[0,1/3]$, moving $\rho_{1/3}$ to $\rho_{2/3}:=(\delta_{x_1},0)$ following an interior Wasserstein geodesic $\W_\bO(\delta_{x_0},\delta_{x_1})$ in time $t\in[1/3,2/3]$ along $I=[x_0,x_1]$, and then transferring back $\rho_{2/3}=(\delta_{x_1},0)$ to $\rho_1=(0,\delta_{x_1})$ by pure reaction in time $[2/3,1]$.
  Taking into account the scaling in time gives a cost $3(1/2+R^2/2+1/2)$, which is clearly smaller than $\W^2_\G(\delta_{x_0},\delta_{x_1})=\frac 12 d^2_\G(x_0,x_1)$ if the major axis is sufficiently large.
  \begin{figure}
    \begin{center}
    \begin{tikzpicture}
      \draw (0,0) ellipse (6cm and 1cm);
      \draw [ultra thick] (0,-1)--(0,1);
      \draw [ultra thick,->] (0,-1)--(0,0);
      \filldraw (0,1) circle (2pt);
      \filldraw (0,-1) circle (2pt);
      \node [above] at (0,1) {$x_R$};
      \node [below] at (0,-1) {$x_0$};
      \node [right] at (.2,0) {$I$};
    \end{tikzpicture}
    \end{center}
  \caption{The flat ellipse and the bridge}
  \label{fig:ellipse}
  \end{figure}
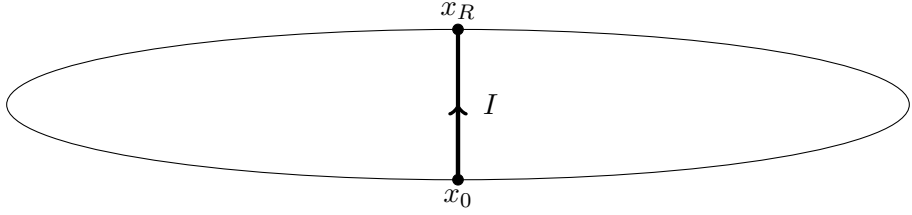

  \item
  Recall from \eqref{eq:WFR_leq_Wrr} that there always holds $\Wrr^2(\rho_0,\rho_1)\geq \WFR^2(\g_0,\g_1)$, thus a counterexample can only come from strict inequality.
  The heuristic explanation is then as follows: It is known  \cite{chizat2018interpolating,kondratyev2016new} that a Wasserstein-Fisher-Rao geodesic $(\g^*,G^*,f^*)$ always has $f^*\neq 0$, unless $\g_0=\g_1$ (roughly speaking because $f^*=\g^*\frac {\psi^*}{\k^2}$ and $G^*=\g^*\nabla\psi^*$ for some scalar potential $\psi^*$, thus $f^*=0$ would imply $\psi^*=0$ and $G^*=0$ too and therefore $\g_0=\g_1$).
  This forces $F^*\neq 0$ through the flux condition $F^*\cdot n=f^*$, and in turn imposes a nontrivial motion and strictly positive kinetic action inside $\O$.
  It is precisely this interior kinetic action that forces a gap $\Wrr^2 >\WFR^2$.
  However this rationale does not take into account the fact that $\o$ may charge the boundary and act as a reservoir storing or releasing mass according to $+\partial_t\g=f=-\partial_t(\o\mrest\pO)$, and some caution must be taken.
  \\
  More rigorously, take from Theorem~\ref{theo:duality_exist_geodesics} a geodesic $\mu=(\o,F\ ;\ \g,G,f)$ from $\rho_0$ to $\rho_1$, and take a Wasserstein-Fisher-Rao geodesic $\mu^*_\G=(\g^*,G^*,f^*)$ from $\g_0$ to $\g_1$ \cite[Theorem 2.1]{chizat2018interpolating}.
  By definition of $\Wrr$ and $\WFR$ we always have
  \begin{multline}
    \label{eq:gap_Wrr_WFR_o0=01}
  \Wrr^2(\rho_0,\rho_1)
  = \A(\mu)
  \\
  =\iint_{\QO}\frac{|F|^2}{2\o} + \iint_{\QG}\frac{|G|^2+\k^2|f|^2}{2\g}
  \geq 
  \iint_{\QO}\frac{|F|^2}{2\o} + \inf\limits_{\g',G',f'}\iint_{\QG}\frac{|G'|^2+\k^2|f'|^2}{2\g'}
  \\
  =
  \iint_{\QO}\frac{|F|^2}{2\o} + \iint_{\QG}\frac{|G^*|^2+\k^2|f^*|^2}{2\g^*}
  =
  \iint_{\QO}\frac{|F|^2}{2\o} + \WFR^2(\g_0,\g_1).
  \end{multline}
  In the middle inequality the infimum is taken along solutions of $\partial_t\g'+\dive G'=f'$ connecting $\g_0,\g_1$ as in the definition \eqref{eq:def_WFR} of $\WFR$.
  
  Hence, in order to produce a strict inequality it suffices to exhibit a pair $\rho_0,\rho_1\in\Pp(\bO)$ such that $F\neq 0$.
  To this end take two points $x_0,x_1\in \G$ far away from each other for the intrinsic distance $d_\G$ on $\G$.
  We claim that any geodesic $(\o,F,\g,G,f)$ between $\rho_0:=\left(0,\delta_{x_0}\right)$ and $\rho_1:=\left(0,\delta_{x_1}\right)$ has $F\neq 0$.
  For if not, the integral formulation \eqref{eq:CE_omega} with $F=0$ easily shows that $\partial_t(\o\mrest\O)=0$ and $\partial_t(\o\mrest\pO)=-f$.
  In other words no real flux arises from the interior, $f$ only consists in a pure source term, and $\o\mrest\pO$ must act as a reservoir for whatever mass must be carried to--or discharged from--the boundary $\g$-species.
  With our choice of measures $\g_0=\delta_{x_0},\g_1= \delta_{x_1}$, and if $d_\G(x_0,x_1)>\pi \k$, it is known \cite[Theorem 4.1]{chizat2018interpolating} that the Wasserstein-Fisher-Rao geodesic is of pure Fisher-Rao reaction type, namely $\g_t= (1-t)^2\delta_{x_0} + t^2\delta_{x_1}$ with $G\equiv 0$.
  This prescribes $f_t=\partial_t\g_t=-2(1-t)\delta_{x_0}+2t\delta_{x_1}$, and the condition $\partial_t(\o_t\mrest \pO)=-f_t$ gives by direct integration $(\o_t\mrest\pO)=(\o_0\mrest\pO)-\int_0^tf_s\rd s=(1-t)^2\delta_{x_0}-t^2\delta_{x_1}$.
  This contradicts the positivity at $x_1$.
  As a consequence either $F\neq 0$ or the middle inequality in \eqref{eq:gap_Wrr_WFR_o0=01} is strict, and in any case we obtain $\Wrr^2(\rho_0,\rho_1)>\WFR(\g_0,\g_1)$ as desired.
  \end{enumerate}
  \begin{rmk}
   The opposite line of thoughts shows that the lower bound \eqref{eq:WFR_leq_Wrr} is optimal:
   As an example, take $\rho_0=\left(\frac 12 \delta_{x_1},\frac 12 \delta_{x_0}\right)$ and $\rho_1=\left(\frac 12 \delta_{x_0},\frac 12 \delta_{x_1}\right)$ for two points $x_0,x_1\in\pO$.
   In other words, put some initial $\o_0$-mass at $x_1$ where $\g_1=\frac{1}{2}\delta_{x_1}$ needs to be created, and don't put any $\o_0$-mass at $x_0$ where $\g_0=\frac{1}{2}\delta_{x_0}$ needs to discharge.
   Clearly $\rho_0,\rho_1$ can be connected by a pure ``reaction'' path with $F=G=0$, the optimal way to do this is precisely given by the Fisher-Rao geodesics between $\g_0,\g_1$, and thus $\Wrr^2(\rho_0,\rho_1)=\WFR^2(\g_0,\g_1)$ since no kinetic action is involved along the interpolation.
   It is interesting to note that the total density remains constant along the process, i-e $\varrho_0=\varrho_t=\varrho_1=\frac{1}{2}\delta_{x_0}+\frac{1}{2}\delta_{x_1}$.
  \end{rmk}
\begin{enumerate}[(i)]
\setcounter{enumi}{2}
  \item 
  From Proposition~\ref{prop:comparison_other_distances}(\ref{item:prop_comparison_|o0|=|01|_|g0|=|g1|})(\ref{item:prop_comparison_W2_leq_Wrr}) with $\g_0=\g_1$ we already know that
  \begin{equation}
  \label{eq:W2_leq_Wrr_leq_W2int}
  \W^2_\bO(\varrho_0,\varrho_1)\leq \Wrr^2(\rho_0,\rho_1) \leq \W^2_{\bO}(\o_0,\o_1) + 0.
  \end{equation}
  With our assumption that $\O$ is convex and because $\varrho_0\mrest\pO=(\o_0\mrest \pO)+\g_0=(\o_1\mrest \pO)+\g_1=\varrho_1\mrest\pO$, standard arguments from classical optimal transport guarantee that the whole boundary $\pO$ is fixated in the Monge-Kantorovich problem defining $\W^2_\bO(\varrho_0,\varrho_1)$.
  As a consequence $\W^2_\bO(\varrho_0,\varrho_1)=\W^2_\bO(\varrho_0\mrest\O,\varrho_1\mrest\O)=\W^2_\bO(\o_0\mrest\O,\o_1\mrest\O)$.
  The very same convexity argument with now $\o_0\mrest\pO=\o_1\mrest\pO$ also guarantees that $\W^2_\bO(\o_0,\o_1)=\W^2_\bO(\o_0\mrest\O,\o_1\mrest\O)$, hence $\W^2_\bO(\varrho_0,\varrho_1)=\W^2_\bO(\o_0,\o_1)$ in \eqref{eq:W2_leq_Wrr_leq_W2int} and \eqref{eq:Wrr=W2_g_fixed} follows.
  \\
  To see that the convexity of $\O$ is really required, choose a non-convex domain $\O$ and some $\rho_0,\rho_1$ with $\g_0=\g_1$ but $\o_0,\o_1$ supported in the interior such that $\W_\bO^2(\varrho_0,\varrho_1)<\W_\bO^2(\o_0,\o_1)$.
  (Take e.g. $\O$ banana-shaped, with $\varrho_0=\frac 12\delta_{x_0}+\frac 12 \delta_y$ and $\varrho_1=\frac{1}{2}\delta_{x_1}+\frac 12 \delta_y$ for two points $x_0,x_1\in\O$ such that the segment $[x_0,x_1]$ is tangent to $\pO$ at $y\in\pO$.)
  Anticipating that there always holds $\lim\limits_{\k\to 0}\Wrr^2(\rho_0,\rho_1)=\W^2_\bO(\varrho_0,\varrho_1)$, see Theorem~\ref{theo:small_toll} later on, the result follows from $\Wrr^2(\rho_0,\rho_1)\sim \W^2_\bO(\varrho_0,\varrho_1) < \W_\bO^2(\o_0,\o_1)$ for small $\k$ and the proof is complete.
  (Of course the proof of Theorem~\ref{theo:small_toll} will not rely on the present statement and there is no circular reasoning here.)
\end{enumerate}
\end{proof}
Finally, let us record for completeness an easy consequence of the previous Proposition~\ref{prop:comparison_other_distances}:
\begin{prop}
\label{prop:Wrr=W_no_boundary_mass}
For any $\rho_0=(\o_0,0)\in\Pp(\bO)$ and $\rho_1=(\o_1,0)\in\Pp(\bO)$ there holds
$$
\Wrr^2(\rho_0,\rho_1)=\W^2_{\bO}(\o_0,\o_1)
=\W^2_{\bO}(\varrho_0,\varrho_1).
$$
Moreover a geodesic is given by $(\o,F,\g,G,f)=(\o^*,F^*,0,0,0)$ for any interior Wasserstein geodesic $(\o^*,F^*)$ between $\o_0$ and $\o_1$ in $\bO$.
\end{prop}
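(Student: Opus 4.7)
The plan is to sandwich $\Wrr^2(\rho_0,\rho_1)$ between matching upper and lower bounds, both already available from Proposition~\ref{prop:comparison_other_distances}. Since $\g_0=\g_1=0$, we have trivially $\varrho_i=\o_i$ so $\W^2_\bO(\o_0,\o_1)=\W^2_\bO(\varrho_0,\varrho_1)$, and only the first equality requires work.

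For the upper bound, the cleanest approach is to exhibit the candidate minimizer directly rather than invoking \eqref{eq:Wrr_leq_W+W}. Let $(\o^*,F^*)$ be an interior Wasserstein geodesic from $\o_0$ to $\o_1$ in the sense of \eqref{eq:def_formal_WO}, so that $(\o^*,F^*)$ satisfies $\partial_t\o^*+\dive F^*=0$ in $\O$ with zero flux $F^*\cdot n=0$ on $\pO$ and endpoints $\o_0,\o_1$. Then the tuple $\mu:=(\o^*,F^*\,;\,0,0,0)$ is automatically an element of $\CE(\rho_0,\rho_1)$: the first continuity equation in Definition~\ref{def:CE} reduces precisely to the Wasserstein one (with $f=0$ matching $F^*\cdot n=0$), and the second equation $\partial_t\g+\dive G=f$ is satisfied trivially with $\g=G=f=0$ and $\g_0=\g_1=0$. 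By definition of the action and since $\A^\k_\G(0,0,0)=0$, one has $\A(\mu)=\A_\O(\o^*,F^*)=\W^2_\bO(\o_0,\o_1)$. Hence $\Wrr^2(\rho_0,\rho_1)\leq\W^2_\bO(\o_0,\o_1)$.

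For the lower bound, I simply invoke Proposition~\ref{prop:comparison_other_distances}(\ref{item:prop_comparison_W2_leq_Wrr}), which gives $\W^2_\bO(\varrho_0,\varrho_1)\leq\Wrr^2(\rho_0,\rho_1)$ for arbitrary pairs; combined with $\varrho_i=\o_i$ this yields $\W^2_\bO(\o_0,\o_1)\leq\Wrr^2(\rho_0,\rho_1)$. The two bounds together give the claimed equality, and moreover the competitor $\mu=(\o^*,F^*,0,0,0)$ constructed above achieves the infimum $\Wrr^2(\rho_0,\rho_1)$, so it is a bona fide geodesic.

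There is no real obstacle here: the main observation is just that the zero-flux Wasserstein continuity equation for $\o$ is exactly what the generalized continuity equation collapses to once one sets $\g=G=f=0$, so an interior Wasserstein geodesic lifts canonically into an admissible competitor for $\Wrr$, and the already established comparison inequalities of Proposition~\ref{prop:comparison_other_distances} pin down the value from both sides.
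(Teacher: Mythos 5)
Your proof is correct and follows essentially the same route as the paper: the lower bound comes from Proposition~\ref{prop:comparison_other_distances}(\ref{item:prop_comparison_W2_leq_Wrr}), and the upper bound and the geodesic come from lifting an interior Wasserstein geodesic to the competitor $(\o^*,F^*,0,0,0)\in\CE(\rho_0,\rho_1)$. The only cosmetic difference is that the paper obtains the upper bound by citing \eqref{eq:Wrr_leq_W+W} and then separately verifies the competitor is a geodesic, whereas you merge these two steps by exhibiting the competitor directly for the upper bound; the underlying argument is identical.
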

We stress that this holds regardless of any convexity assumption on $\O$, and is a desired feature of our model: In the absence of $\g$-mass on the boundary the dynamics should be governed by classical optimal transport.
This also shows that the bounds \eqref{eq:Wrr_leq_W+W}\eqref{eq:Wrr_geq_W} are sharp.
\begin{rmk}
When including a parameter $\delta>0$ in the boundary kinetic cost $\delta^2\frac{|G|^2}{2\g}$, Proposition~\ref{prop:Wrr=W_no_boundary_mass} may completely fail:
In the opposite spirit to Proposition~\ref{prop:comparison_other_distances_fixed}(\ref{item:Wrr_neq_W2_o_fixed}) and Figure~\ref{fig:ellipse}, if $\delta,\k\ll 1$ are sufficiently small then it may turn out to be much more efficient to first pay the toll, move along the fast ring road at a very cheap price $\mathcal O(\delta^2)$, and then pay again the toll to exit the ring road for a total cost $\mathcal O(\k^2+\delta^2)$, rather than avoiding the toll but then having to remain in the city and traveling at a more expensive cost $\mathcal O(1)$.
\end{rmk}
\begin{proof}
Equality of the distances immediately follows by \eqref{eq:Wrr_leq_W+W}\eqref{eq:Wrr_geq_W} with here $\W^2_{\bO}(\o_0,\o_1) + \W^2_\G(\g_0,\g_1)=\W^2_{\bO}(\varrho_0,\varrho_1)+ 0$ since $\g_0=\g_1=0$.
For the second part of the statement, pick any Wasserstein geodesic $(\o^*,F^*)$ between $(\o_0,\o_1)$.
Since Wasserstein geodesics have by definition zero-flux $f=0$ on the boundary, it is easy to check that $\mu=(\o^*,F^*;0,0,0)\in\CE(\rho_0,\rho_1)$ in the sense of Definition~\ref{def:CE}.
Moreover by the first step $\A(\mu)=\A_\O(\o^*,F^*)+\A_\G(0,0,0)=\W^2_{\bO}(\o_0,\o_1)+0=\Wrr^2(\rho_0,\rho_1)$, hence $\mu$ is a geodesic.
\end{proof}


\subsection{Topological properties}
\label{subsec:topological_properties}
Most--if not all--distances usually involved in optimal transportation share the property that they metrize the narrow convergence, whether it be the Wasserstein, Wasserstein-Fisher-Rao, bounded-Lipschitz distances, etc.
By construction however, our $\Wrr$ metric clearly distinguishes the boundary and the interior via the non-reducible toll.
On the other hand the narrow convergence on $\bO$ does not see any particular distinction between the interior and the boundary, thus one could expect that our distance induces a stronger topology:
\begin{theo}
\label{theo:double_narrow_cv}
The distance $\Wrr$ metrizes the ``double'' narrow convergence, i-e $\Wrr(\rho_n,\rho)\to 0$ if and only if $\o_n\narrowcv \o$ in $\bO$ and $\g_n\narrowcv  \g$ in $\G$.
Moreover the space $\left(\Pp(\bO),\Wrr\right)$ is complete.
\end{theo}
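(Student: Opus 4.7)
The easy direction $\Wrr(\rho_n,\rho)\to 0 \Rightarrow \o_n\narrowcv\o$ and $\g_n\narrowcv\g$ is immediate from the bounded-Lipschitz estimate \eqref{eq:dBL_leq_Wrr}, since $d_{\mathrm{BL},\bO}(\o_n,\o)+d_{\mathrm{BL},\G}(\g_n,\g)\leq C_\k\Wrr(\rho_n,\rho)\to 0$ and bounded-Lipschitz distances metrize the narrow convergence on $\M^+$.

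For the converse, the subtle point is that the masses $m^\O_n:=\TV{\o_n}$ and $m^\G_n:=\TV{\g_n}$ need not exactly equal $m^\O:=\TV{\o}$ and $m^\G:=\TV{\g}$, which precludes a direct use of \eqref{eq:Wrr_leq_W+W}. Narrow convergence on the compact $\bO$ (testing against $\mathbf 1$) does however give $m^\O_n\to m^\O$ and $m^\G_n\to m^\G$, hence the defect $\delta_n:=m^\O_n-m^\O=m^\G-m^\G_n$ tends to zero. Assuming w.l.o.g.\ $\delta_n\geq 0$ (by symmetry of the roles of $\o,\g$), the plan is to interpolate via an auxiliary $\tilde\rho_n=(\tilde\o_n,\tilde\g_n)\in\Pp(\bO)$ with \emph{matching} masses $\TV{\tilde\o_n}=m^\O,\TV{\tilde\g_n}=m^\G$, such that (i) $\Wrr(\rho_n,\tilde\rho_n)\to 0$ and (ii) $\tilde\o_n\narrowcv\o,\tilde\g_n\narrowcv\g$. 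Granting both, \eqref{eq:Wrr_leq_W+W} yields $\Wrr^2(\tilde\rho_n,\rho)\leq\W^2_\bO(\tilde\o_n,\o)+\W^2_\G(\tilde\g_n,\g)\to 0$ because the Wasserstein distances metrize the narrow convergence within equal-mass classes over the compact spaces $\bO$ and $\G$, and the triangle inequality concludes.

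To build $\tilde\rho_n$, fix $y\in\pO$, pick a Borel set $A_n\subset\bO$ with $\o_n(A_n)=\delta_n$ (possible since $m^\O_n\geq\delta_n$), and set $\tilde\o_n:=\o_n-\o_n\mrest A_n,\tilde\g_n:=\g_n+\delta_n\delta_y$; the bound $\TV{\o_n-\tilde\o_n}+\TV{\g_n-\tilde\g_n}=2\delta_n\to 0$ gives (ii) for free. An admissible path realizing (i) can be assembled in two phases. In $t\in[0,1/2]$, a standard Wasserstein geodesic transports $\o_n\mrest A_n$ onto $\delta_n\delta_y$ inside $\bO$ while leaving $\o_n-\o_n\mrest A_n$ and $\g_n$ frozen, at a cost at most $2\cdot\tfrac12(\mathrm{diam}\,\bO)^2\delta_n=\mathcal O(\delta_n)$ (the factor $2$ comes from the time rescaling). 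In $t\in[1/2,1]$, a pure Fisher-Rao reaction at $y$---exactly as in Step~2 of the proof of Lemma~\ref{lem:Wrr_finite}, with $\alpha(t)+\beta(t)\equiv\delta_n$, the quadratic profile $\beta(t)=4(t-1/2)^2\delta_n$ and $f_t=\beta'(t)\delta_y$---converts the interior atom $\delta_n\delta_y$ into a boundary atom while leaving the rest static, yielding a cost $\leq 4\k^2\delta_n=\mathcal O(\delta_n)$. Summing the two phases gives $\Wrr^2(\rho_n,\tilde\rho_n)\leq\mathcal O(\delta_n)\to 0$.

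Completeness then comes for free: if $(\rho_n)$ is Cauchy for $\Wrr$, then by \eqref{eq:dBL_leq_Wrr} both $(\o_n)$ and $(\g_n)$ are Cauchy in the bounded-Lipschitz metrics, hence narrowly convergent to some $\o\in\M^+(\bO),\g\in\M^+(\G)$ by completeness of $(\M^+,d_{\mathrm{BL}})$. Testing against $\mathbf 1$ on the compact set $\bO$ yields $\TV{\o}+\TV{\g}=\lim(\TV{\o_n}+\TV{\g_n})=1$, so $(\o,\g)\in\Pp(\bO)$, and the first part of the theorem gives $\Wrr(\rho_n,(\o,\g))\to 0$. The main technical hurdle is the mass-correction construction above: the quadratic choice $\beta(t)\propto(t-1/2)^2$ is critical so that $\k^2|\beta'|^2/\g_t$ remains integrable at $t=1/2$ when $\g_n(\{y\})=0$, and the degenerate cases $m^\O=0$ or $m^\G=0$ must be covered separately by the same recipe (using $A_n=\bO$ when $m^\O=0$, or swapping the roles of $\o$ and $\g$ when $\delta_n<0$).
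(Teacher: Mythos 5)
Your proof is correct and delivers the result, but the converse direction takes a genuinely simpler route than the paper's. The paper constructs the mass-matching approximant $\tilde\rho_n$ in three stages: it first flushes the interior density inward to open an annular gap around $\pO$, next clears an $r_n$-neighborhood of a chosen $y\in\pO$ and stacks the mass excess there, and finally invokes the explicit Dirac-to-Dirac geodesic of Theorem~\ref{theo:explicit_1D_geodesics} inside the resulting safety cylinder to exchange mass between $\G$ and a nearby interior point $x_n$. Your construction collapses this to two phases: a plain interior Wasserstein transport (which by definition has zero flux, so $\g_n$ stays frozen) gathering the excess into an $\o$-atom $\delta_n\delta_y$, followed by a purely local Fisher-Rao reaction at $y$ converting that $\o$-atom into a $\g$-atom with no motion at all ($F=G=0$). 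Because your Phase 2 never leaves the single point $y$, no safety cylinder is needed, and the explicit geodesic of Section~\ref{sec:explicit_geodesics_dirac_masses} is entirely bypassed; the quadratic profile $\beta(t)\propto(t-1/2)^2$ is exactly the (mildly suboptimal) ansatz used in Step~2 of Lemma~\ref{lem:Wrr_finite}, and your estimate $4\k^2\delta_n$ is indeed $\mathcal O(\delta_n)$. The first (easy) direction and the completeness argument coincide with the paper's.

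One small defect: the statement ``pick a Borel set $A_n\subset\bO$ with $\o_n(A_n)=\delta_n$'' is not justified by $m^\O_n\geq\delta_n$ alone, since $\o_n$ may have atoms larger than $\delta_n$ (e.g.\ $\o_n=\delta_{x_0}$). Replace the restriction $\o_n\mrest A_n$ by any measure $\hat\o_n\in\M^+(\bO)$ with $\hat\o_n\leq\o_n$ and $\TV{\hat\o_n}=\delta_n$, the simplest choice being $\hat\o_n:=\frac{\delta_n}{m^\O_n}\,\o_n$; everything else in your argument (the Wasserstein bound $\W^2_\bO(\hat\o_n,\delta_n\delta_y)\leq\tfrac12(\operatorname{diam}\bO)^2\delta_n$, the total-variation control $\TV{\o_n-\tilde\o_n}+\TV{\g_n-\tilde\g_n}=2\delta_n$, and the positivity of $\tilde\o_n$) goes through verbatim with this substitution.
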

Note that the double narrow convergence is strictly stronger than ``total'' convergence $\varrho_n=\o_n+\g_n\narrowcv\o+\g=\varrho $ in $\bO$ of the overall densities.
The typical example of a sequence of totally -- but not doubly -- converging sequence is $\rho_n=(\delta_{x_n},0)$ and $\rho=(0,\delta_x)$ for a sequence $x_n\in\O$ converging to some $x\in\pO$.
This sequence abruptly jumps from the interior ($\g_n=0$ for all $n$) to the boundary ($\g =\delta_x$) in the limit:
Due to the non-reducible toll this has a fixed positive cost $\Wrr(\delta_{x_n},\delta)\geq \mathcal O(\k)>0$ and therefore the sequence cannot converge for the $\Wrr$ topology.
Also, this double narrow convergence is completely equivalent to narrow convergence on $\P(\bO\cup\G)$.
\begin{proof}
Let $\rho_n=(\o_n,\g_n)$ be a sequence converging to $\rho=(\o,\g)$, i-e $\Wrr(\rho_n,\rho)\to 0$.
Owing to \eqref{eq:dBL_leq_Wrr} we see that $d_{\mathrm{BL},\bO}(\o_n,\o)\to 0$ and $d_{\mathrm{BL},\G}(\g_n,\g)\to 0$.
Since the bounded-Lipschitz distance metrizes the narrow convergence we see that $\o_n\narrowcv\o$ and $\g_n\narrowcv\g$ in $\bO,\G$, respectively.
\\
Conversely, assume that $\o_n\narrowcv\o$ and $\g_n\narrowcv\g$.
If $\TV{\o_n}=\TV{\o}$ for all $n$ (thus $\TV{\g_n}=\TV{\g}$ as well) then we would be done: since the (classical, conservative) Wasserstein distances on $\bO,\G$ metrize the corresponding narrow convergences \cite[Theorem 6.9]{villani_BIG} we would immediately get by Proposition~\ref{prop:comparison_other_distances}(\ref{item:prop_comparison_|o0|=|01|_|g0|=|g1|}) $\Wrr^2(\rho_n,\rho)\leq \W^2_\bO(\o_n,\o)+\W^2_\G(\g_n,\g)\to 0$.

The rest of the proof below will consist in reducing to this case of fixed masses, up to paying a negligible price.
More precisely, we will construct a sequence $\tilde\rho_n=(\tilde\o_n,\tilde\g_n)\in\Pp(\bO)$ such that $\Wrr^2(\rho_n,\tilde\rho_n)\to 0$, $\TV{\tilde\o_n}=\TV{\o}$ and $\TV{\tilde\g_n}=\TV{\g}$, as well as $\tilde\o_n\narrowcv\o$ and $\tilde\g_n\narrowcv\g$.
The previous discussion will guarantee $\Wrr(\tilde\rho_n, \rho)\to 0$, and by triangular inequality we will conclude that $\Wrr(\rho_n,\rho)\leq \Wrr(\rho_n,\tilde\rho_n)+\Wrr(\tilde\rho_n, \rho)\to 0$. 

In order to make this rigorous, we use
$$
\eps_n:=\TV{\g_n}-\TV{\g}
$$
as a control parameter.
If $\eps_n=0$ then $\TV{\g_n}=\TV{\g}$ and $\TV{\o_n}=\TV{\o}$, hence $\rho_n$ needs not be modified.
Consider first the case of an excess of mass on the boundary, $\eps_n>0$.
By narrow convergence we have of course $\eps_n\to 0$.
In order to construct $\tilde\rho_n$ the idea is to first create an annular gap around $\pO$ at small cost, and then infiltrate the small excess of mass $\eps_n>0$ from $\pO$ into the small gap -- again for a small price -- using the geodesics between point-masses from Theorem~\ref{theo:explicit_1D_geodesics}.
The whole process will be accomplished in three successive steps $\rho_n\leadsto\hat\rho_n\leadsto\check\rho_n\leadsto\tilde\rho_n$.
Each new measure will remain close to the previous one in the $\Wrr$ distance and in the narrow topology.
\begin{enumerate}
 \item 
  The first step will not modify $\g_n$.
  Pick a smooth, constant-in-time velocity field $v=v(x)$ pointing normally inward with unit norm on a fixed but sufficiently small tubular neighborhood of $\pO$, and satisfying moreover $\|v\|_{L^\infty(\bO)}=1$.
  Let $\hat \o_n:=(\Phi^{v}_{3\tau_n})\pf\o_n$ be the measure obtained by following the $v$-flow starting from $\o_n$ for a time $3\tau_n$ with $\tau_n$ sufficiently small.
  Since $v$ points inward this flow is mass conservative, $\TV{\hat\o_n}=\TV{\o_n}$.
  Choosing $\tau_n$ small enough, each Lagrangian particle is moving over a distance at most $3\tau_n\|v\|_\infty$, hence $\W_\bO^2(\hat\o_n,\o_n)=\mathcal O(\tau_n^2)=o(1)$.
  In particular $\hat\o_n-\o_n\narrowcv 0$, and according to \eqref{eq:Wrr_leq_W+W}  $\hat\rho_n:=(\hat\o_n,\g_n)$ satisfies $\Wrr^2(\hat\rho_n,\rho_n)\leq \W^2_\bO(\hat\o_n,\o_n)+0\to 0$.
  Moreover since we decided to follow the inward unit velocity field $v$ for time $3\tau_n$ we have now a tubular gap around $\pO$ of size at least $2\tau_n$, i-e $\dist(\supp \hat\o_n,\pO)\geq 2\tau_n$.
  \item
  The second step will leave now $\hat\o_n$ from the previous step unchanged.
  Fix an arbitrary point $y\in\pO$ and choose a small $r_n>0$.
  Using only mass displacement along the boundary (i-e $\partial_t\g+\dive G=0$) it is easy to first open up a hole of size $r_n$ around $y$ and then bring back a small $\eps_n$ mass at the center $y$ -- see Figure~\ref{fig:interior_transfer_neighborhood}.
  This newly defined measure $\check\g_n$ was obtained by moving first some mass (possibly of order one) over a distance at most $r_n$, and then moving a mass $\eps_n$ over a distance at most $\operatorname{diam}(\G)$.
  As a result $\W^2_\G(\check\g_n,\hat\g_n)=\mathcal O(r_n^2+\eps_n)=o(1)$.
  Moreover by construction $\check\g_n-\hat\g_n\narrowcv 0$ as required, and from \eqref{eq:Wrr_leq_W+W} we see that $\check \rho_n:=(\hat \o_n,\check\g_n)$ satisfies $\Wrr^2(\check\rho_n,\hat\rho_n)\leq 0+\W^2_\G(\check\g_n,\hat\g_n)\to 0$.
  \item
  The final step will transfer the $\eps_n$-excess of mass from $\G$ to $\O$ and pay the corresponding toll charge, which is expected to be small since this mass is small.
  After the previous steps we have now an interior safety cylinder of length at least $\tau_n$ and radius $r_n$ around $y$, containing no mass except at $y$--see again Figure~\ref{fig:interior_transfer_neighborhood}.
  Let us put inside this cylinder a one-dimensional segment $I_n=[x_n,y]$ of length $\tau_n\ll 1$, for some $x_n\in\O$ close to $y$.
  Using the geodesic between point-particles from Theorem~\ref{theo:explicit_1D_geodesics} and leaving everything outside of the safety cylinder untouched, it is easy to construct an admissible path between $(0,\eps_n\delta_y)$ and $(\eps_n\delta_{x_n},0)$ by simply multiplying \eqref{eq:interpolant_1d_o}\eqref{eq:interpolant_1d_g} by $\eps_n$.
  The resulting cost is simply $\eps_n$ times \eqref{eq:explicit_Wrr_1D_diracs} with $R=|x_n-y|=\tau_n\ll 1$, and therefore the final measure $\tilde\rho_n:= (\check\o_n+\eps_n\delta_{x_n}, \check\g_n-\eps_n\delta_y)$ satisfies $\Wrr^2(\check\rho_n,\tilde\rho_n)\leq \eps_n\frac{1}{2}(\tau_n^2 +\k^2\alpha_\k)\frac{\alpha_\k}{\alpha_\k-1}\to 0$.
  Moreover since only a small fraction of mass $\eps_n>0$ was involved in this last step we have of course $\tilde\o_n-\check\o_n\narrowcv 0$ and $\tilde\g_n-\check\g_n\narrowcv 0$.
\end{enumerate}
\noindent
This deals with the case $\eps_n=\TV{\g_n}-\TV{\g}>0$.
  \begin{figure}[h!]
  \begin{center}
  \begin{tikzpicture}[use Hobby shortcut]
    \draw [domain=-70:70] plot ({-3+3*cos(\x)}, {0+3*sin(\x)});
    \draw [domain=-70:70,dotted,thick] plot ({-3+cos(\x)}, {0+sin(\x)});
    \draw [<->] (0,-1.5)--(-2,-1.5);
    \node at (-1,-1.8) {$2\tau_n$};
    \draw [red, ultra thick,domain=10:70] plot ({-3+3*cos(\x)}, {0+3*sin(\x)});
    \draw [red, ultra thick,domain=-10:-70] plot ({-3+3*cos(\x)}, {0+3*sin(\x)});
    \draw [dotted, thick] (-.1,.5) -- (-1.4,.5) -- (-1.4,-.5) -- (-.1,-.5);
    \draw [<->] (0,-.9) -- (-1,-.9);
    \node at (-.5,-1.1) {$\tau_n$};
    \draw [<->] (1,0) -- (1,-.5);
    \node [right] at (1,-.25) {$r_n$};
    \draw [thick] (-1,0)--(0,0);
    \draw [thick,->] (0,0)--(-.5,0);
    \node [above] at (-1,0) {$x_n$};
    \filldraw (-1,0) circle (2pt);
    \filldraw [red] (0,0) circle (2pt);
    \node [right] at (0,0) {$y$};
    \node [below] at (-.5,0) {$I_n$};
    \node [red] at (.5,1) {$\check\g_n$};
    \node  at (-2,2) {$\O$};
    \node  at (-.5,2.5) {$\G$};
      \path
  (-2.5,0) coordinate (z0)
  (-4,1) coordinate (z1)
  (-5,0) coordinate (z2)
  (-5,-.5) coordinate (z3)
  (-4,-1) coordinate (z4);
  \fill[closed, color=blue!10] (z0) .. (z1) .. (z2) .. (z3) .. (z4);
    \node [blue] at (-4,0) {$\supp\check\o_n$};
  \end{tikzpicture}
\end{center}
\caption{Discharge of the $\eps_n$-excess of mass from $y$ to $x_n$ within the small safety cylinder}
\label{fig:interior_transfer_neighborhood}
\end{figure}
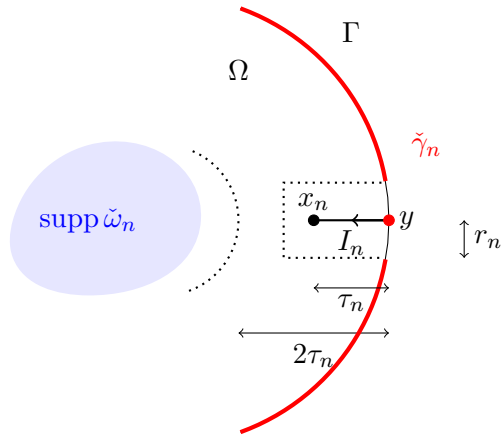

If $\eps_n<0$ we simply use the same three elementary steps in a different order: first modify $\o_n$ so as to confine almost all the interior mass outside of a $2\tau_n$ tubular neighborhood of $\pO$.
Then bring back a small mass $\eps_n$ to create an atomic measure $\eps_n\delta_{x_n}$ inside a small cylinder based at $y\in\pO$.
Modify next $\g_n$ so as to puncture a small $r_n$-neighborhood around $y$ on the boundary.
Finally, transfer the $\eps_n$-mass from the interior point $x_n$ to $y$ using the (suitably rescaled) geodesic from Theorem~\ref{theo:duality_exist_geodesics}.
This settles the case $\eps_n<0$ and establishes our first statement.

Regarding now the completeness, let $\{\rho_n\}_n\subset\Pp(\bO)$ be a Cauchy sequence, $\Wrr(\rho_p,\rho_q)\to 0$ as $p,q\to+\infty$.
Owing to the bounded-Lipschitz estimate \eqref{eq:dBL_leq_Wrr} we see that $\o_n,\g_n$ are Cauchy for the $d_{\mathrm{BL},\bO},d_{\mathrm{BL},\G}$ distances, respectively.
Since $(\M^+,d_{\mathrm{BL}})$ is complete there is a pair $\rho=(\o,\g)$ such that $\o_n\to\o$ and $\g_n\to\g$ for the respective bounded-Lipschitz distances.
Because the latter metrize the respective narrow convergences we have $\o_n\narrowcv\o$ in $\bO$ and $\g_n\narrowcv\g$ in $\G$, hence in particular $\rho=(\o,\g)\in\Pp(\bO)$.
By the first step this double narrow convergence characterizes $\Wrr(\rho_n,\rho)\to 0$ and the proof is complete.
\end{proof}

\section{Varying the toll}
\label{sec:varying toll}
Up to now the parameter $\k>0$ was fixed.
In this section we investigate the behavior of the distance and of the geodesics in the large and small toll limits $\k\to+\infty,\k\to0$.
\\

We first recall from Proposition~\ref{prop:comparison_other_distances}(\ref{item:prop_comparison_|o0|=|01|_|g0|=|g1|})(\ref{item:prop_comparison_W2_leq_Wrr}) that our ring road distance is sandwiched between Wasserstein distances as
$$
\W_\bO^2(\varrho_0,\varrho_1)
\leq
\Wrr^2(\rho_0,\rho_1)
\leq 
\W^2_{\bO}(\o_0,\o_1) + \W^2_{\G}(\g_0,\g_1),
$$
both bounds being sharp.
The upper bound should be understood here in the general sense, i-e $\W^2_{\bO}(\o_0,\o_1),\W^2_{\G}(\g_0,\g_1)=+\infty$ in case of mass incompatibility $\TV{\o_0}\neq \TV{\o_1},\TV{\g_0}\neq \TV{\g_1}$.
Both bounds are sharp from Proposition~\ref{prop:Wrr=W_no_boundary_mass}, and we have moreover
\begin{prop}
  \label{prop:monotonicity_k}
 For fixed $\rho_0,\rho_1\in\Pp(\bO)$ the map $\k\mapsto \Wrr(\rho_0,\rho_1)$ is nondecreasing.
\end{prop}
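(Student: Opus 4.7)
The plan is to exploit the primal formulation \eqref{eq:def_Wrr} directly, where monotonicity becomes transparent. The admissible set $\CE(\rho_0,\rho_1)$ of Definition~\ref{def:CE} does not involve $\k$, so the entire $\k$-dependence of
$$
\Wrr^2(\rho_0,\rho_1)=\inf_{\mu\in\CE(\rho_0,\rho_1)}\A(\mu)
$$
is carried by the boundary Lagrangian $A^\k_\G$ of Definition~\ref{defi:A_O_A_G}. Reading off its formula
$$
A^\k_\G(\g,G,f)=\frac{|G|^2+\k^2 f^2}{2\g}\quad\mbox{if }\g>0,
$$
(and $0$ or $+\infty$ otherwise), the pointwise map $\k\mapsto A^\k_\G(\g,G,f)$ is nondecreasing on $[0,+\infty)$ for every $(\g,G,f)$.

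First I would fix $0<\k_1\leq\k_2$, an arbitrary competitor $\mu=(\o,F\,;\,\g,G,f)\in\CE(\rho_0,\rho_1)$, and a reference measure $\lambda_\G\gg|\mu_\G|$. By the pointwise monotonicity just recalled,
$$
A^{\k_1}_\G\!\left(\tfrac{\rd\mu_\G}{\rd\lambda_\G}\right)(t,x)\leq A^{\k_2}_\G\!\left(\tfrac{\rd\mu_\G}{\rd\lambda_\G}\right)(t,x)
\qquad\lambda_\G\text{-a.e.}
$$
Integrating against $\lambda_\G$ and adding the $\k$-independent interior contribution $\A_\O(\mu_\O)$ yields $\A^{\k_1}(\mu)\leq\A^{\k_2}(\mu)$. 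Since this holds for every common competitor, passing to the infimum preserves the inequality: picking a $\mathcal{W}_{\k_2}$-geodesic $\mu^*$ (which exists by Theorem~\ref{theo:duality_exist_geodesics}) as a test element gives
$$
\mathcal{W}_{\k_1}^2(\rho_0,\rho_1)\leq \A^{\k_1}(\mu^*)\leq \A^{\k_2}(\mu^*)=\mathcal{W}_{\k_2}^2(\rho_0,\rho_1).
$$

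There is no genuine obstacle here: the statement is essentially by inspection once one notices that $\k$ enters the functional only through the monotone weight $\k^2 f^2/(2\g)$ and that the constraint set is $\k$-free. For the record, the same monotonicity can alternatively be read off from the dual formulation in Theorem~\ref{theo:duality_exist_geodesics}: the closed convex set $S^\k_\G$ defined in \eqref{eq:def_D_Gamma} is nondecreasing in $\k$ (larger $\k$ relaxes the coupling term $\tfrac{1}{2\k^2}|c-d|^2$), hence $\iota_{S^\k_\G}$ is nonincreasing in $\k$ and so $\mathcal J^\k(\phi,\psi)$ is nondecreasing in $\k$ for every fixed $(\phi,\psi)\in E$; taking the supremum preserves monotonicity and recovers the same conclusion.
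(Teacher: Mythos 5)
Your primal argument is correct: the admissible set $\CE(\rho_0,\rho_1)$ is $\k$-free, the Lagrangian $A^\k_\G$ is pointwise nondecreasing in $\k$, so $\A^{\k_1}(\mu)\leq\A^{\k_2}(\mu)$ for every competitor $\mu$, and the infima are ordered the same way. (Invoking the existence of a $\mathcal W_{\k_2}$-geodesic $\mu^*$ is a harmless convenience; one could simply compare the infima over the common constraint set.) This is a genuinely different route from the paper's, which argues entirely on the dual side: the subsolution sets $\mathcal S^\k$ are nested nondecreasing in $\k$, so $\sup_{\mathcal S^\k}\mathcal J^\k$ is nondecreasing. Your primary argument has the advantage of being completely elementary — it only reads off the sign of the $\k$-dependence in the action and uses no machinery beyond the definition — whereas the paper's dual proof, while equally short, sits downstream of the Fenchel–Rockafellar duality in Theorem~\ref{theo:duality_exist_geodesics}. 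You also reproduce the paper's dual argument correctly as an aside, so in effect you give both proofs. No gap.
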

This strongly suggests that $\Wrr$ should converge as
$$
\W_\bO^2(\varrho_0,\varrho_1)
\xleftarrow[\k\to 0]{} 
\Wrr^2(\rho_0,\rho_1)
\xrightarrow[\k\to +\infty]{} 
\W^2_{\bO}(\o_0,\o_1) + \W^2_{\G}(\g_0,\g_1)
$$
when $\k$ varies.
This is indeed the case as we shall see below, and both limits will be established separately.
\begin{proof}
Note that the set of smooth subsolutions
$$
\mathcal S^\k:=
\Bigg\{
(\phi,\psi)\in \C^1(\QO)\times\C^1(\QG):
\qquad
\partial_t\phi+\frac{|\nabla\phi|^2}{2}\leq 0
\mbox{ and }
\partial_t\psi+\frac{|\nabla\psi|^2}{2}+\frac{|\psi-\phi|^2}{2\k^2}\leq 0
\Bigg\}
$$
is nondecreasing in $\k$.
The monotonicity immediately follows from the duality
$$
\Wrr^2(\rho_0,\rho_1)=\sup\limits_{(\phi,\psi)\in\mathcal S^\k}
\left\{
  \int_\bO\phi(1,.)\rd\o_1-\int_\bO\phi(0,.)\rd\o_0  +  \int_\G \psi(1,.)\rd\g_1  -  \int_\G\psi(0,.)\rd\g_0
\right\}
$$
in Theorem~\ref{theo:duality_exist_geodesics}.
\end{proof}

\subsection{The large toll limit $\k\to+\infty$}
When $\k\to+\infty$ the mass flux $f$ between $\O$ and $\pO$ is penalized more and more heavily, and one should expect that in the limit no such flux can persist.
If $\rho_0,\rho_1$ have different masses on the interior and boundary then some flux is really needed in order to connect them, thus we expect that large tolls should lead to
\begin{equation}
  \label{eq:Wrr_k_to_infty_different_mass}
 \Wrr(\rho_0,\rho_1)\xrightarrow[\k\to+\infty]{}+\infty
\hspace{1cm}\mbox{for } \TV{\o_0}\neq \TV{\o_1}
\mbox{ or }\TV{\g_0}\neq \TV{\g_1}.
\end{equation}
On the other hand for compatible masses $\TV{\o_0}=\TV{\o_1},\TV{\g_0}=\TV{\g_1}$ we have from Proposition~\ref{prop:comparison_other_distances} the upper bound
$$
 \Wrr^2(\rho_0,\rho_1) \leq \W^2_{\bO}(\o_0,\o_1) + \W^2_{\G}(\g_0,\g_1).
$$
As $\k\to+\infty$ we expect that no flux should be allowed $f=0$, thus the two continuity equations in Definition~\ref{def:CE} should uncouple and we are left with two independent continuity equations, $\partial_t\o+\dive F=0$ in $\bO$ (with no-flux boundary conditions on $\pO$) and $\partial_t\g+\dive G=0$ in $\G$.
Since the flux $f=0$ the total action becomes $\frac{|F|^2}{2\o} + \frac{|G|^2}{2\g}+0$, and the minimization is now uncoupled in $(\o,F)$ and $(\g,G)$.
Thus one expects to retrieve $\Wrr^2(\rho_0,\rho_1) \to \W^2_{\bO}(\o_0,\o_1) + \W^2_{\G}(\g_0,\g_1)$.
\\

In order to make this more rigorous, we first have
\begin{lem}
  \label{lem:Wrr>f_TV>|g1-g0|}
 For any $\rho_0,\rho_1\in\Pp(\bO)$ and any geodesic $(\o,F,\g,G,f)$ there holds
 $$
 \Wrr^2(\rho_0,\rho_1)
 \geq \frac{\k^2}{2}\TV{f}^2
 \geq \frac{\k^2}{2} \Big|\TV{\g_1}-\TV{\g_0}  \Big|^2.
 $$
\end{lem}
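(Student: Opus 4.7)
The plan is to treat the two inequalities separately; neither involves the interior variables $(\o,F)$, so both will be pure boundary-side estimates. Throughout I fix any geodesic $\mu=(\o,F\,;\,\g,G,f)\in\CE(\rho_0,\rho_1)$, for which $\Wrr^2(\rho_0,\rho_1)=\A(\mu)<+\infty$ by Theorem~\ref{theo:duality_exist_geodesics}. Proposition~\ref{prop:disintegration_momentum_velocity} then applies: in particular the densities disintegrate in time with $\TV{\o_t}+\TV{\g_t}=1$ a.e., one has $|f|\ll\g$, the Radon–Nikodym density $r_t=\rd f/\rd\g$ is well defined $\g$-a.e., and the action admits the explicit representation \eqref{eq:action_velocity_reaction}.

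For the first inequality, I would discard the nonnegative interior and tangential-boundary contributions in \eqref{eq:action_velocity_reaction} to obtain
$$
\Wrr^2(\rho_0,\rho_1)=\A(\mu)\ \geq\ \frac{\k^2}{2}\iint_{\QG} r^2\,\rd\g.
$$
Writing $\TV{f}=\iint_\QG |r|\,\rd\g$ and applying Cauchy–Schwarz with respect to the probability-like measure $\g$ gives
$$
\TV{f}^2\ \leq\ \g(\QG)\cdot\iint_\QG r^2\,\rd\g.
$$
The mass-conservation identity \eqref{eq:|o_t|+|g_t|=|rho_t|=1} immediately yields $\g(\QG)=\int_0^1\TV{\g_t}\,\rd t\leq 1$, and combining the last two estimates gives exactly $\Wrr^2(\rho_0,\rho_1)\geq\frac{\k^2}{2}\TV{f}^2$.

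For the second inequality, I would test the constant function $\psi(t,x)\equiv 1\in\C^1(\QG)$ in the weak boundary continuity equation \eqref{eq:CE_gamma}. Since $\partial_t\psi=0$ and $\nabla\psi=0$, only the flux and endpoint terms survive and one reads off
$$
\iint_\QG\rd f\ =\ \TV{\g_1}-\TV{\g_0}.
$$
By definition of the total variation $\bigl|\iint_\QG \rd f\bigr|\leq\TV{f}$, whence $|\TV{\g_1}-\TV{\g_0}|^2\leq \TV{f}^2$, closing the chain.

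There is no real obstacle here: both bounds are soft structural consequences of the action functional and of the integral form of the continuity equation on $\G$. The only subtlety is to invoke Proposition~\ref{prop:disintegration_momentum_velocity} at the outset to justify the absolute continuity $|f|\ll\g$, the existence of $r$, and the a.e.\ mass bound $\TV{\g_t}\leq 1$ used to control $\g(\QG)$.
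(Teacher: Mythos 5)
Your proof is correct and follows essentially the same route as the paper's: lower-bound the action by the flux term, apply Cauchy--Schwarz (the paper phrases the identical estimate as Jensen's inequality) together with the mass bound $\TV{\g}\leq 1$ from \eqref{eq:|o_t|+|g_t|=|rho_t|=1}, and test $\psi\equiv 1$ in \eqref{eq:CE_gamma} to read off $\iint_\QG\rd f=\TV{\g_1}-\TV{\g_0}$.
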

This will immediately prove \eqref{eq:Wrr_k_to_infty_different_mass}.
\begin{proof}
  The argument is reminiscent from the proof of Proposition~\ref{prop:comparison_other_distances}(\ref{item:WFR_leq_Wrr}).
  Pick from Theorem~\ref{theo:duality_exist_geodesics} a geodesic $\mu=(\o,F\ ;\ \g,G,f)$ from $\rho_0$ to $\rho_1$.
  By definition (and with a slight abuse of notation) we see that
  $$
    \Wrr^2(\rho_0,\rho_1)=\A(\mu)
    \geq
    \frac {\k^2}{2} \iint_\QG \frac{f^2}{\g}
    =
    \frac {\k^2}{2} \iint_\QG r^2\, \rd\g,
  $$
  where we exploited Proposition~\ref{prop:disintegration_momentum_velocity} to express the flux cost in terms of $r=\frac{\rd f}{\rd \g}$.
  Owing to \eqref{eq:|o_t|+|g_t|=|rho_t|=1} we have moreover $\TV{\g}\leq \TV{\varrho}=1$, thus by Jensen's inequality
  $$
  \iint_\QG r^2\, \rd\g
  \geq \frac{1}{\TV{\g}}\left(\iint_\QG |r|\rd\g\right)^2
  \geq \left(\iint_\QG |r|\rd\g\right)^2
  = \TV{f}^2
  $$
  because $\frac{\rd|f|}{\rd \g}=|r|$.
  The continuity equation $\partial_t\g +\dive G=f$ finally controls the mass difference as
  $$
  \TV{f}
  \geq
  \left| \iint_\QG 1\rd f\right|
  \overset{\eqref{eq:CE_gamma}}{=}
  \left|\int_\G 1\rd\g_1  -\int_\G 1\rd\g_1 \right|
  =
  \Big| \TV{\g_1}-\TV{\g_0}\Big|.
  $$
\end{proof}
This settles the case of incompatible masses.
For the general case we have
\begin{theo}
  \label{theo:CV_Wrr_large_toll}
 For fixed $\rho_0,\rho_1\in\Pp(\bO)$ there holds
 \begin{equation}
    \label{eq:Wrr_cv_to_W+W}
  \Wrr^2(\rho_0,\rho_1)
  \xrightarrow[\k\to+\infty]{}
  \W^2_\bO(\o_0,\o_1)+\W^2_\G(\g_0,\g_1).
  \end{equation}
 Let moreover $\mu^\k=(\o^\k,F^\k\ ;\ \g^\k,G^\k,f^\k)$ be any geodesic for $\Wrr^2(\rho_0,\rho_1)$.
 If the mass compatibility $\TV{\o_0}=\TV{\o_1}$ holds (hence $\TV{\g_0}=\TV{\g_1}$ too) then up to a subsequence
 \begin{equation}
   \label{eq:geodesics_cvnarrow_k_to_infty}
   (\o^\k,F^\k) \narrowcv (\o,F),
   \qquad
   (\g^\k,G^\k) \narrowcv (\g,G),
   \qtext{and}
   \TV{f^\k}\to 0
\end{equation}
  for two geodesics $(\o,F)$ and $(\g,G)$ in $\W^2_\bO(\o_0,\o_1)$ and $\W^2_\G(\g_0,\g_1)$, respectively.
\end{theo}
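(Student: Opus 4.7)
The plan is to split the argument by mass compatibility. When $\TV{\o_0}\neq\TV{\o_1}$ (equivalently $\TV{\g_0}\neq\TV{\g_1}$), Lemma~\ref{lem:Wrr>f_TV>|g1-g0|} gives $\Wrr^2(\rho_0,\rho_1)\geq \frac{\k^2}{2}\big|\TV{\g_1}-\TV{\g_0}\big|^2\to+\infty$, which matches the right-hand side $\W^2_\bO(\o_0,\o_1)+\W^2_\G(\g_0,\g_1)=+\infty$ by the convention recalled just before the statement. The assertion on geodesics is vacuous in that regime, so from now on I assume both masses are compatible.

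In the compatible case, Proposition~\ref{prop:comparison_other_distances}(ii) provides the uniform upper bound $\Wrr^2(\rho_0,\rho_1)\leq M:=\W^2_\bO(\o_0,\o_1)+\W^2_\G(\g_0,\g_1)<+\infty$ independent of $\k$, which combined with Proposition~\ref{prop:monotonicity_k} ensures that $\lim_{\k\to+\infty}\Wrr^2(\rho_0,\rho_1)$ exists and is at most $M$. The bulk of the work is the matching lower bound, for which I would extract weak limits along any sequence $\k_n\to+\infty$ of geodesics $\mu^{\k_n}=(\o^{\k_n},F^{\k_n},\g^{\k_n},G^{\k_n},f^{\k_n})$. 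The action bound $\A(\mu^{\k_n})\leq M$ yields three a priori estimates: (a) $\o^{\k_n},\g^{\k_n}$ are probability-normalized hence tight on $\QO,\QG$; (b) Cauchy-Schwarz together with $\TV{\o^{\k_n}},\TV{\g^{\k_n}}\leq 1$ from Proposition~\ref{prop:disintegration_momentum_velocity}\ref{item:og>0_and_FGf_asolute_continuity} gives $\TV{F^{\k_n}},\TV{G^{\k_n}}\leq\sqrt{2M}$; (c) as in the proof of Lemma~\ref{lem:Wrr>f_TV>|g1-g0|}, $\TV{f^{\k_n}}^2\leq \frac{2M}{\k_n^2}\to 0$. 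Passing to subsequences, $(\o^{\k_n},F^{\k_n})\narrowcv(\o,F)$, $(\g^{\k_n},G^{\k_n})\narrowcv(\g,G)$, and $f^{\k_n}\to 0$ in total variation, hence narrowly.

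Passing to the limit in the weak formulations \eqref{eq:CE_omega}--\eqref{eq:CE_gamma}, the vanishing of $f^{\k_n}$ decouples the system into $\partial_t\o+\dive F=0$ in $\O$ with $F\cdot n=0$ on $\pO$, and $\partial_t\g+\dive G=0$ in $\G$, with the prescribed endpoints. Thus $(\o,F)$ is admissible for $\W^2_\bO(\o_0,\o_1)$ and $(\g,G)$ for $\W^2_\G(\g_0,\g_1)$. The narrow lower semicontinuity of the kinetic integrals (convex, $1$-homogeneous, and $\k$-independent, so the standard result \cite[Theorem 3.3]{bouchitte1990new} applies uniformly in $n$) then yields
\begin{align*}
\liminf_{\k_n\to\infty}\Wrr^2(\rho_0,\rho_1)
&\geq \liminf_{\k_n\to\infty}\Bigg(\iint_\QO\frac{|F^{\k_n}|^2}{2\o^{\k_n}}+\iint_\QG\frac{|G^{\k_n}|^2}{2\g^{\k_n}}\Bigg)\\
&\geq \iint_\QO\frac{|F|^2}{2\o}+\iint_\QG\frac{|G|^2}{2\g}\\
&\geq \W^2_\bO(\o_0,\o_1)+\W^2_\G(\g_0,\g_1)=M,
\end{align*}
where the nonnegative flux contribution has been dropped in the first line. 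This closes \eqref{eq:Wrr_cv_to_W+W}; forcing equality throughout the chain forces $(\o,F)$ and $(\g,G)$ to be optimal in their respective (decoupled) Wasserstein problems, giving \eqref{eq:geodesics_cvnarrow_k_to_infty}.

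The only delicate point is the passage to the limit in the boundary coupling encoded in \eqref{eq:CE_omega}, namely that the flux term $\iint_\QG\varphi\,\rd f^{\k_n}$ vanishes and so the no-flux condition $F\cdot n=0$ is recovered in the limit. This reduces to the explicit bound $\TV{f^{\k_n}}=\mathcal O(1/\k_n)\to 0$ extracted in step (c), which is precisely where the toll-enhanced Lagrangian $\k^2|f|^2/(2\g)$ pays off. Everything else is standard narrow compactness on the compact base spaces $\QO,\QG$ combined with lower semicontinuity of the fixed convex $1$-homogeneous kinetic actions.
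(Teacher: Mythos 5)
Your proof is correct, and it reaches the conclusion by a genuinely different route from the paper. The paper first establishes that cluster points of the geodesics are separate Wasserstein geodesics by invoking an abstract auxiliary result on weighted optimization (Lemma~\ref{lem:weighted_optimization}, applied with $\mathfrak f=$ kinetic cost, $\mathfrak g=$ flux cost, and $K$ the compact set of all geodesics), and only afterwards deduces the convergence of the distance via a chain of inequalities. You dispense with that lemma entirely: after extracting narrow limits and passing to the limit in the weak continuity equations (which decouple because $\TV{f^{\k_n}}\to 0$), you observe that $(\o,F)$ and $(\g,G)$ are merely \emph{admissible} for the two separate Wasserstein problems, then sandwich
$$
\limsup_\k \Wrr^2 \leq M = \W^2_\bO(\o_0,\o_1)+\W^2_\G(\g_0,\g_1)
\leq \iint_\QO\frac{|F|^2}{2\o}+\iint_\QG\frac{|G|^2}{2\g}
\leq \liminf_\k \Wrr^2,
$$
using monotonicity in $\k$ and narrow lower semicontinuity of the $\k$-independent kinetic actions. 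Forcing equality throughout simultaneously proves \eqref{eq:Wrr_cv_to_W+W} and the geodesicity of the limit. This avoids the slight overhead of the $\Gamma$-convergence-type lemma, at the cost of proving optimality of the limit only indirectly (by saturation of the sandwich) rather than as a structural statement about the minimizers. Both arguments use identical ingredients up to the compactness step ($\TV{f^\k}^2\leq 2\Wrr^2/\k^2$, uniform TV bounds on $F^\k,G^\k$ via Jensen, Prokhorov on compact $\QO,\QG$); the divergence is only in how the limit geodesics are certified.
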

Note that this does not say anything about the asymptotic behavior of geodesics for incompatible masses.
Note also that uniqueness of the $\W_\bO,\W_\G$ limit geodesics would allow to dispense from subsequences, in which case the whole sequence of geodesics would actually converge.
This might be useful numerically speaking, although uniqueness should of course not be expected in general without further assumptions on the geometry of $\O$ or on $\o_0,\o_1$ and $\g_0,\g_1$.
\begin{proof}
  The convergence \eqref{eq:Wrr_cv_to_W+W} was already proved in \eqref{eq:Wrr_k_to_infty_different_mass} for incompatible masses, thus in the rest of the proof we only consider $\TV{\o_0}=\TV{\o_1}$, $\TV{\g_0}=\TV{\g_1}$ and therefore $\W^2_\bO(\o_0,\o_1)+ \W_\G^2(\g_0,\g_1)<+\infty$.
  \\
  We first control the flux term $\TV{f^k}$.
 Since we are in the case of compatible masses we can appeal to \eqref{eq:Wrr_leq_W+W}, and Lemma~\ref{lem:Wrr>f_TV>|g1-g0|} gives
 $$
 \TV{f^\k}^2
 \leq \frac{2}{\k^2} \Wrr^2(\rho_0,\rho_1)
 \leq \frac{2}{\k^2}\left(
\W^2_\bO(\o_0,\o_1)+ \W_\G^2(\g_0,\g_1) \right)\to 0.
$$
We retrieve next some compactness on $\o^\k,F^\k,\g^\k,G^\k$.
To this end, recall first from \eqref{eq:|o_t|+|g_t|=|rho_t|=1} that we have the mass conservation
$
\TV{\o^\k}+\TV{\g^\k}=1,
$
thus $\TV{\o^\k},\TV{\g^\k}\leq 1$ uniformly.
For the momenta $F^\k,G^\k$ observe from \eqref{eq:Wrr_leq_W+W} that any geodesic satisfies
$$
\frac{1}{2}\iint_\QO \frac{|F^\k|^2}{\o^\k} +\frac{1}{2}\iint_\QG \frac{|G^\k|^2}{\g^\k}
  \leq 
  \Wrr^2(\rho_0,\rho_1)\leq \W^2_\bO(\o_0,\o_1)+ \W_\G^2(\g_0,\g_1)
$$
uniformly in $\k>0$.
Using the exact same Jensen inequality as in the proof of Lemma~\ref{lem:Wrr>f_TV>|g1-g0|} we see that, with $F^\k=u^\k \o^\k,G^\k=v^\k\g^\k$,
\begin{multline*}
  \TV{F^\k}^2 + \TV{G^\k}^2
  =
  \left(\iint_\QO |u^\k|\rd \o^\k\right)^2
+
 \left(\iint_\QG |v^\k| \rd\g^\k\right)^2
 \\
 \leq 
 \iint_\QO |u^\k|^2\rd \o^\k +\iint_\QG |v^\k|^2 \rd\g^\k
 =
\iint_\QO \frac{|F^\k|^2}{\o^\k} +\iint_\QG \frac{|G^\k|^2}{\g^\k}
\end{multline*}
and therefore we have a uniform total variation bound $\TV{F^\k} + \TV{G^\k} \leq C$.
Prokhorov's theorem guarantees the weak-$\ast$ compactness $(\o^\k,F^\k,\g^\k,G^\k)\narrowcv (\o,F,\g,G)$ up to subsequences, and we only have to prove that the limits $(\o,F)$ and $(\g,G)$ are necessarily separate Wasserstein geodesics.\\
This will be ensured by the following elementary result for weighted optimization problems (we omit the proof for brevity):
\begin{lem}
  \label{lem:weighted_optimization}
  Let $K$ be a compact set, take $\mathfrak f,\mathfrak g:K\to \R^+\cup\{+\infty\}$ two proper, lower semi-continuous functions, and consider
  $$
  \mathfrak h_\k(x):= \mathfrak f(x) +\k^2 \mathfrak g(x),
  \qquad \k>0.
  $$
  Assume that for all $\k$ there is a minimizer $x_\k\in K$ of $\mathfrak h_\k$.
  Then as $\k\to+\infty$ any cluster point $x_*$ of $\{x_\k\}$ minimizes $\mathfrak f$ in $\argmin \mathfrak g$.
\end{lem}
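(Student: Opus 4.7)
The plan is a standard selection-by-penalization argument: as $\k\to+\infty$ the penalty $\k^2\mathfrak g$ first forces $\mathfrak g(x_\k)$ down to $\min\mathfrak g$, and within the remaining slack $\mathfrak f$ then selects a minimizer among $\argmin\mathfrak g$. There is no significant obstacle in carrying this out; the only small care needed is to pick the right comparison point in order to extract quantitative estimates, and to handle the degenerate case when $\mathfrak f\equiv+\infty$ on $\argmin\mathfrak g$.

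First I would produce an ``ideal'' comparison point. Since $K$ is compact and $\mathfrak g$ is proper and lower semicontinuous, $\argmin\mathfrak g$ is nonempty and closed (hence compact), and $\mathfrak f$ restricted to it attains its infimum. Write $m_*:=\min_K\mathfrak g$ and $m^*:=\inf_{\argmin\mathfrak g}\mathfrak f$. If $m^*=+\infty$ then every element of $\argmin\mathfrak g$ minimizes $\mathfrak f$ over $\argmin\mathfrak g$ and the statement is vacuous, so I would assume $m^*<+\infty$ and fix $y^*\in\argmin\mathfrak g$ with $\mathfrak f(y^*)=m^*$.

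Next I would test $x_\k$ against $y^*$: by optimality of $x_\k$ in $\mathfrak h_\k$,
$$
\mathfrak f(x_\k)+\k^2\mathfrak g(x_\k)\ \le\ \mathfrak h_\k(y^*)\ =\ m^*+\k^2 m_*.
$$
Combining this with $\mathfrak f(x_\k)\ge 0$ and $\mathfrak g(x_\k)\ge m_*$ immediately yields the two a priori bounds
$$
0\ \le\ \mathfrak g(x_\k)-m_*\ \le\ \frac{m^*}{\k^2}
\qqtext{and}
\mathfrak f(x_\k)\ \le\ m^*.
$$
The first forces $\mathfrak g(x_\k)\to m_*$ as $\k\to+\infty$, while the second yields a uniform upper bound on $\mathfrak f(x_\k)$.

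Finally I would pass to the limit along a subsequence $\k_n\to+\infty$ with $x_{\k_n}\to x_*$ in $K$. Lower semicontinuity of $\mathfrak g$ gives $\mathfrak g(x_*)\le\liminf_n\mathfrak g(x_{\k_n})\le m_*$, hence $x_*\in\argmin\mathfrak g$. Lower semicontinuity of $\mathfrak f$ together with the uniform bound yields $\mathfrak f(x_*)\le\liminf_n\mathfrak f(x_{\k_n})\le m^*$, so $x_*$ minimizes $\mathfrak f$ over $\argmin\mathfrak g$, which is the desired conclusion.
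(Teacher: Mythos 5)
The paper omits its own proof of this lemma (``we omit the proof for brevity''), so there is no argument to compare against; your proof is the standard penalization argument and is surely what the author had in mind. The core of it --- testing $x_\k$ against a point $y^*\in\argmin\mathfrak g$ achieving $\mathfrak f(y^*)=m^*$, extracting the two a~priori bounds $\mathfrak g(x_\k)-m_*\le m^*/\k^2$ and $\mathfrak f(x_\k)\le m^*$, and passing both through lower semicontinuity along a convergent subsequence --- is correct and complete.

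One caveat concerns your dismissal of the case $m^*:=\inf_{\argmin\mathfrak g}\mathfrak f=+\infty$ as ``vacuous.'' What you observe there is that the conclusion holds trivially \emph{provided} $x_*\in\argmin\mathfrak g$, but when $m^*=+\infty$ your argument no longer supplies that membership (the bound $\mathfrak g(x_\k)-m_*\le m^*/\k^2$ is empty), and in fact the lemma as literally stated can fail. Take $K=[0,1]$, $\mathfrak g(x)=x$, and $\mathfrak f\equiv+\infty$ on $[0,1/2)$ with $\mathfrak f\equiv 0$ on $[1/2,1]$: both are proper and l.s.c., $\argmin\mathfrak g=\{0\}$, $m^*=+\infty$, yet $x_\k\equiv 1/2$ for every $\k$ and the unique cluster point $1/2\notin\argmin\mathfrak g$. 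So the lemma implicitly carries the extra hypothesis $\inf_{\argmin\mathfrak g}\mathfrak f<+\infty$. This is harmless for the intended use: in the compatible-mass case of Theorem~\ref{theo:CV_Wrr_large_toll}, pairing an interior Wasserstein geodesic with a boundary one yields a zero-flux element of $\argmin\mathfrak g$ with finite $\mathfrak f$, so $m^*<+\infty$ holds and your argument applies verbatim.
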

Here we choose $K\subset\CE(\rho_0,\rho_1)$ to be the set of all geodesics for all values of $\k\geq 1$, which is narrowly compact by the previous discussion and because the linear continuity equations \eqref{eq:CE_omega}\eqref{eq:CE_gamma} are stable under narrow limits.
The functions
$$
\mathfrak f(\o,F,\g,G,f):= \frac{1}{2}\iint _\QO \frac{|F|^2}{\o} + \frac 12 \iint_\QG \frac{|G|^2}{\g}
\qqtext{and}
\mathfrak g(\o,F,\g,G,f):= \frac 12 \iint_\QG \frac{f^2}{\g}
$$
are convex, proper, and lower semicontinuous w.r.t. the narrow (weak-$\ast$) convergence of measures \cite[Theorem 3.3]{bouchitte1990new}, and geodesics are of course minimizers of $\mathcal A=\mathfrak f+\k^2\mathfrak g$.
By definition of $\frac{f^2}{\g}$ (in the extended sense) the minimizers of $\mathfrak g$ are nothing but solutions of the continuity equations \eqref{eq:CE_omega}\eqref{eq:CE_gamma} of the form $\mu=(\o,F,\g,G,0)$, and any minimizer simply assigns the value $\iint\frac{f^2}{\g}=0$ to $\mathfrak g$.
This set of solutions $\CE(\rho_0,\rho_1)$ with $f=0$ obviously identifies with the whole set of pairs $(\o,F)$ and $(\g,G)$ of independent solutions of $\partial_t\o+\dive F=0$ with no-flux conditions, and $\partial_t\g+\dive G=0$, respectively.
Clearly minimizers of the sum $\frac{1}{2}\iint _\QO \frac{|F|^2}{\o} + \frac 12 \iint_\QG \frac{|G|^2}{\g} $ over all such pairs are given by minimizers $(\o,F)$ of $\frac{1}{2}\iint _\QO \frac{|F|^2}{\o}$ on the one hand, and minimizers $(\g,G)$ of $\frac{1}{2}\iint _\QG \frac{|G|^2}{\g}$ on the other hand.
This shows that the limits $(\o,F)$ and $(\g,G)$ are indeed Wasserstein geodesics for $\W_\bO^2(\o_0,\o_1)$ and $\W^2_\G(\g_0,\g_1)$, respectively.
 \\
Let us finally address the convergence in distance \eqref{eq:Wrr_cv_to_W+W}.
From \eqref{eq:Wrr_leq_W+W}, and by lower semi-continuity of the actions with $(\o^\k,F^\k) \narrowcv (\o,F)$ and $(\g^\k,G^\k)\narrowcv(\g,G)$, we see that
\begin{multline*}
  \limsup\limits_{\k\to+\infty} \Wrr^2(\rho_0,\rho_1)
  \leq
  \W^2_{\bO}(\o_0,\o_1) + \W^2_{\G}(\g_0,\g_1)
  =
 \iint _\QO \frac{|F|^2}{2\o}
  + \iint_\QG \frac{|G|^2}{2\g}
  \\
  \leq 
  \liminf\limits_{\k\to+\infty} \iint _\QO \frac{|F^\k|^2}{2\o^\k}
  + \liminf\limits_{\k\to+\infty}   \iint_\QG \frac{|G^\k|^2}{2\g^\k}
  \leq \liminf\limits_{\k\to+\infty}\left(\iint _\QO \frac{|F^\k|^2}{2\o^\k}
  +  \iint_\QG \frac{|G^\k|^2}{2\g^\k}\right)
  \\
  \leq \liminf\limits_{\k\to+\infty}\left(\iint _\QO \frac{|F^\k|^2}{2\o^\k}
  +\iint_\QG \frac{|G^\k|^2+\kappa^2 |f^\k|^2}{2\g^\k}\right)
  = \liminf\limits_{\k\to+\infty}\Wrr^2(\rho_0,\rho_1),
\end{multline*}
where we used that $(\o^\k,F^\k,\g^\k,G^\k,f^\k)$ is a geodesic in the last equality.
This implies that $\liminf=\limsup=\lim$ in this chain of inequalities, thus
$$
\lim\limits_{\k\to+\infty}\Wrr^2(\rho_0,\rho_1)
=
\iint _\QO \frac{|F|^2}{2\o}
  + \iint_\QG \frac{|G|^2}{2\g}
  =
  \W^2_\bO(\o_0,\o_1)+\W^2_\G(\g_0,\g_1)
$$
and the proof is complete.
\end{proof}
\subsection{The small toll limit $\k\to 0$}
When $\k\to 0$ the mass flux $f$ between $\O$ and $\pO$ is barely penalized, and mass can thus flow almost freely between the interior and the boundary.
The discrimination between $\o$ and $\g$ types of cars on the ring road becomes weaker and weaker, on $\pO=\G$ only the total density $(\o\mrest\pO)+\g$ is retained in the end, and one therefore expects to recover the classical optimal transport problem for the total densities $\varrho=\o+\g$ on $\bO$:
\begin{theo}
  \label{theo:small_toll}
 For fixed $\rho_0,\rho_1\in\Pp(\bO)$ there holds
 \begin{equation}
    \label{eq:Wrr_cv_to_W}
  \Wrr^2(\rho_0,\rho_1)
  \xrightarrow[\k\to0]{}
  \W^2_\bO(\varrho_0,\varrho_1)
  \end{equation}
  with $\varrho_0=\o_0+\g_0$ and $\varrho_1=\o_1+\g_1$.
 Let moreover $\mu^\k=(\o^\k,F^\k\ ;\ \g^\k,G^\k,f^\k)$ be any geodesic for $\Wrr^2(\rho_0,\rho_1)$, let $\bar G^\k$ be the extension of $G^\k$ (by zero in the normal direction as well as inside $\O$), and set $\varrho^\k:=\o^\k+\g^\k$ and $H^\k:=F^\k+\bar G^\k$.
 Then, up to a subsequence
 \begin{equation}
   \label{eq:geodesics_cvnarrow_k_to_0}
   (\varrho^\k,H^\k) \narrowcv (\varrho,H)
\end{equation}
  for a Wasserstein geodesic $(\varrho,H)$ minimizing $\W^2_\bO(\varrho_0,\varrho_1)$.
\end{theo}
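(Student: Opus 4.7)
The lower bound comes for free from Proposition~\ref{prop:comparison_other_distances}(\ref{item:prop_comparison_W2_leq_Wrr}), which gives $\W^2_\bO(\varrho_0,\varrho_1)\leq \Wrr^2(\rho_0,\rho_1)$ uniformly in $\k>0$ and hence $\W^2_\bO(\varrho_0,\varrho_1)\leq \liminf_{\k\to 0}\Wrr^2(\rho_0,\rho_1)$. All the work is in producing a matching upper bound, which I plan to obtain by concatenating three explicit admissible pieces. Given a small $\eps>0$, split $[0,1]=[0,\eps]\cup[\eps,1-\eps]\cup[1-\eps,1]$. First, on $[0,\eps]$ take $F=G=0$, $\g_t=(1-t/\eps)^2\g_0$, $f_t=\partial_t\g_t$, and let $\o_t:=\o_0+[1-(1-t/\eps)^2]\g_0$ (the added mass charging $\pO$): this is the pure Fisher-Rao reaction used in the proof of Lemma~\ref{lem:Wrr_finite}, ending at the intermediate state $(\varrho_0,0)\in\Pp(\bO)$. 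Next, on $[\eps,1-\eps]$, invoke Proposition~\ref{prop:Wrr=W_no_boundary_mass} to connect $(\varrho_0,0)$ to $(\varrho_1,0)$ by a time-rescaled interior Wasserstein geodesic with $\g=G=f=0$. Finally, on $[1-\eps,1]$ use the symmetric reverse reaction from $(\varrho_1,0)$ back to $(\o_1,\g_1)$. Summing the three (scaled) actions gives
$$
\Wrr^2(\rho_0,\rho_1)\leq \frac{2\k^2(\TV{\g_0}+\TV{\g_1})}{\eps}+\frac{\W^2_\bO(\varrho_0,\varrho_1)}{1-2\eps},
$$
and choosing $\eps=\k\to 0$ yields $\limsup_{\k\to 0}\Wrr^2(\rho_0,\rho_1)\leq \W^2_\bO(\varrho_0,\varrho_1)$, hence \eqref{eq:Wrr_cv_to_W}.

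\textbf{Convergence of geodesics.}
With \eqref{eq:Wrr_cv_to_W} in hand, the action $\A(\mu^\k)=\Wrr^2(\rho_0,\rho_1)$ is uniformly bounded as $\k\to 0$. Mimicking the compactness argument in the proof of Theorem~\ref{theo:CV_Wrr_large_toll}, Cauchy-Schwarz applied to $F^\k=u^\k\o^\k$ and $G^\k=v^\k\g^\k$ together with the mass bound \eqref{eq:|o_t|+|g_t|=|rho_t|=1} gives uniform total-variation bounds on $F^\k,G^\k$, hence on $H^\k=F^\k+\bar G^\k$, while of course $\TV{\varrho^\k}=1$. Prokhorov then produces a subsequence with $(\varrho^\k,H^\k)\narrowcv(\varrho,H)$, and narrow stability of the linear conservative continuity equation ensures that $(\varrho,H)$ still solves \eqref{eq:CE_rho}, hence is an admissible competitor for $\W^2_\bO(\varrho_0,\varrho_1)$. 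To identify $(\varrho,H)$ as an actual minimizer I use the pointwise Cauchy-Schwarz inequality
$$
\frac{|F+\bar G|^2}{\o+\g}\leq \frac{|F|^2}{\o}+\frac{|G|^2}{\g}
$$
(valid in the extended measure-theoretic sense) to get $\iint_\QO\frac{|H^\k|^2}{2\varrho^\k}\leq \A(\mu^\k)$, and combine this with narrow lower semicontinuity of the Benamou-Brenier Lagrangian:
$$
\W^2_\bO(\varrho_0,\varrho_1)\leq \iint_\QO \frac{|H|^2}{2\varrho}\leq \liminf_{\k\to 0}\iint_\QO \frac{|H^\k|^2}{2\varrho^\k}\leq \liminf_{\k\to 0}\A(\mu^\k)=\W^2_\bO(\varrho_0,\varrho_1).
$$
Equality throughout forces $(\varrho,H)$ to be a Wasserstein geodesic, which concludes the proof.

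\textbf{Main obstacle.}
The delicate point is verifying that the reactive end-pieces in the upper-bound construction truly produce an element of $\CE(\rho_0,\rho_1)$ in the sense of Definition~\ref{def:CE}. This requires $\o$ to charge $\pO$ and to act there as a reservoir exchanging mass with $\g$ purely through the flux term $f$ (with $F=0$), which is exactly the measure-theoretic flexibility emphasized after Definition~\ref{def:CE}; a direct integration-by-parts in $t$ against any $\varphi\in\C^1(\QO)$ confirms that the ansatz satisfies \eqref{eq:CE_omega}--\eqref{eq:CE_gamma} on each sub-interval. Concatenating the three pieces in time is then routine since solutions to the continuity equations can be glued whenever their values agree at the splicing times, which is guaranteed here by the common intermediate states $(\varrho_0,0)$ and $(\varrho_1,0)$.
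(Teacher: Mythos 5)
Your liminf bound and the geodesic-convergence part coincide almost verbatim with the paper's argument, and they are correct. Where you genuinely depart from the paper is the limsup bound, and your alternative is correct and arguably simpler. The paper perturbs the endpoints $\rho_i\leadsto\rho_i^\k=(\o_i^\k,0)$ with $\o_i^\k$ supported strictly inside $\O$, by first pushing the interior mass inward with a smooth velocity field and then infiltrating the boundary mass to depth $\eps_\k$ via a superposition of the explicit one-dimensional point-mass geodesics from Theorem~\ref{theo:explicit_1D_geodesics}; it then invokes Proposition~\ref{prop:Wrr=W_no_boundary_mass} for $\Wrr(\rho_0^\k,\rho_1^\k)=\W_\bO(\o_0^\k,\o_1^\k)$ and the narrow metrization of $\W_\bO$ to pass to the limit, and closes with a triangle inequality. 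You instead build a single explicit three-piece competitor on $[0,\eps]\cup[\eps,1-\eps]\cup[1-\eps,1]$: a pure Fisher--Rao in-place conversion of $\g_0$ into $\o$-mass sitting on $\pO$ (exactly the mechanism already deployed in the proof of Lemma~\ref{lem:Wrr_finite}), then an interior Wasserstein geodesic from $(\varrho_0,0)$ to $(\varrho_1,0)$ with $\g=G=f\equiv 0$ justified by Proposition~\ref{prop:Wrr=W_no_boundary_mass}, then the reverse reaction. Your cost estimate $\frac{2\k^2(\TV{\g_0}+\TV{\g_1})}{\eps}+\frac{\W^2_\bO(\varrho_0,\varrho_1)}{1-2\eps}$ is correct (the Fisher--Rao cost over $\G$ from $\g_0$ to $0$ is $2\k^2\TV{\g_0}$, rescaled by $1/\eps$), and $\eps=\k$ does the job. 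Your construction avoids moving any mass off the boundary and makes no use of the one-dimensional explicit geodesics, which is a genuine simplification; what it relies on instead is the measure-theoretic flexibility that $\o$ may charge $\pO$ and exchange mass with $\g$ purely through $f$ with $F=0$, which is precisely the point the paper stresses right after Definition~\ref{def:CE}, so this is well within the framework. The pointwise subadditivity inequality $\frac{|F+\bar G|^2}{\o+\g}\leq\frac{|F|^2}{\o}+\frac{|G|^2}{\g}$ you invoke for the geodesic identification is the same subadditivity of the convex $1$-homogeneous functional that the paper uses, so that step is also fine.
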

Note that this does not say anything about the convergence of the fluxes $f^\k$.
This is a delicate issue because the small cost $\k\ll 1$ allows the time regularity to degenerate.
As an example, for the explicit one-dimensional geodesic $\Wrr^2((\delta_{x_0},0),(0,\delta_{x_R}))$ from Section~\ref{sec:explicit_geodesics_dirac_masses} it is easy to check that $f^\k\narrowcv f= \delta_{1}\otimes\delta_{x_R}$ in $\M([0,1]\times\G)$ and that in the limit the boundary density becomes discontinuous in time and jumps from $\g_t=0$ for all $t<1$ to $\g_1=\delta_{x_R}$.
This explains the terminal time-impulse $\delta_1$ in $f=\partial_t\g$.
For $\k>0$ this transition of mass is spread over time, but as $\k\to 0$ the transfer is delayed as much as possible, concentrates in shorter and shorter time intervals, and in the limit all the mass jumps instantaneously.
Much of the proof below will actually consist in checking that the blow-up $\iint_\G\frac{|f^\k|^2}{2\g^\k}\to+\infty$ remains mild enough so that the effective cost $\k^2\iint_\G\frac{|f^\k|^2}{2\g^\k}\to 0$.

\begin{proof}
  We will first establish \eqref{eq:Wrr_cv_to_W}, and then deduce convergence of the geodesics.
  To this end we first recall the lower bound $\W^2_\bO(\varrho_0,\varrho_1) \leq \Wrr^2(\rho_0,\rho_1)$ from Proposition~\ref{prop:comparison_other_distances}.
  In particular
  $$
  \W^2_\bO(\varrho_0,\varrho_1) \leq \liminf\limits_{\k\to 0}\Wrr^2(\rho_0,\rho_1)
  $$
  and therefore it suffices to prove that $\limsup\limits_{\k\to 0} \Wrr^2(\rho_0,\rho_1)\leq \W^2_\bO(\varrho_0,\varrho_1)$.
  In order to establish this upper bound we will construct two measures $\rho^\k_0=(\o^\k_0,0),\rho^\k_1=(\o^\k_1,0)\in\Pp(\bO)$ supported away from the boundary such that
  \begin{equation}
    \label{eq:Wrr_approx_W_perturb_endpoints_interiors}
  \Wrr(\rho_0,\rho^\k_0)=o(1),
  \qquad
  \Wrr(\rho^\k_0,\rho^\k_1)\sim \W_\bO(\varrho_0,\varrho_1)
  ,
  \qquad
  \Wrr(\rho^\k_1,\rho_1)=o(1)
\end{equation}
  as $\k\to 0$.
  This will require in particular transferring all the boundary mass of $\g_0,\g_1$ to the interior at a small cost.
  By triangular inequality \eqref{eq:Wrr_approx_W_perturb_endpoints_interiors} will give the desired upper bound 
  $$
  \limsup\limits_{\k\to0} \Wrr(\rho_0,\rho_1)
  \leq 
  \limsup\limits_{\k\to0} \Big(\Wrr(\rho_0,\rho^\k_0) +\Wrr(\rho^\k_0,\rho^\k_1) +  \Wrr(\rho^\k_1,\rho_1)\Big)
  = 
  0 + \W_\bO(\varrho_0,\varrho_1) +0.
  $$
  We only discuss the construction for the perturbation $\rho_0^\k$ of $\rho_0$, the other endpoint is handled in the exact same fashion.
  The perturbation will be constructed in two steps $\rho_0\leadsto\tilde\rho^\k_0\leadsto\rho^\k_0$, first creating an annular gap around the boundary and then infiltrating the boundary mass inside the gap as in the proof of Theorem~\ref{theo:double_narrow_cv}.
  Each step will remain $o(1)$-close to the previous one in the $\Wrr$ distance as $\k\to0$.
  \begin{enumerate}
   \item 
   Pick $\eps=\eps_\k$ small enough to be determined later on.
   For $r>0$ we write $\G^{r}=\{x:\quad \operatorname{dist}(x,\pO)\leq r\}$ for the closed interior $r$-neighborhood of $\G=\pO$.
   Let $v_\eps(x)$ be a smooth velocity field with $\|v_\eps\|_{L^\infty(\bO)}\leq 1$, pointing inward and perpendicular to $\pO$, with unit norm on $\G^{2\eps}$, and vanishing outside of $\G^{3\eps}$.
   Let $\tilde \o^\k_0:=\Phi^{v_\eps}_{2\eps}\pf\o_0$ be the measure obtained by following the $v$-flow for times $2\eps$ starting from $\o_0$, keep $\tilde \g^\k_0:=\g_0$ unchanged, and let $\tilde\rho_0^\k:=\tilde\o^\k_0+\tilde\g^\k_0$.
   The corresponding time-flow $\tilde \o^\eps_t:=\Phi^{v_\eps}_t\pf\o_0$ gives an admissible path $\mu_t:=(\tilde \o^\eps_t,v_\eps \tilde \o^\eps_t,\g_0,0,0)$ connecting $\rho_0$ to $\tilde\rho_0^\k$ in time $t\in[0,2\eps]$, with cost
   \begin{equation}
     \label{eq:Wrr_rho_tilde_rho_leq_eps}
   \Wrr^2(\rho_0,\tilde\rho^\k_0) 
   \leq
   \frac {2\eps}2\int_0^{2\eps}\int_\bO |v_\eps|^2\rd\tilde\o^\k_t \rd t\leq 2\eps^2 \|v_\eps\|^2_\infty\|\tilde\o^\k\|\leq 2\eps^2 .
    \end{equation}
   Here we used an appropriate scaling in time in the middle integral.
   Note that the new interior density $\tilde\o^\k_0$ is now supported outside of $\G^{2\eps}$ at a distance at least $2\eps$ from the boundary.
   This will allow below to safely perturb the remaining measures only within $\G^\eps$, without modifying at all the just-constructed $\tilde\o^\k_0$.
  \item
  The second step will transfer the yet untouched boundary mass to the interior at distance exactly $\eps$ and for a small cost, while leaving the previous interior density $\tilde\o^\k_0$ untouched outside of $\G^{2\eps}$.
  For $y\in\pO$ the normal map $N_\eps(y):= y-\eps n(y)$ takes values in $\O$ if $\eps$ is small enough.
  Abbreviating $y_\eps:=N_{\eps}(y)\in\O$, we define
  $$
  \hat\o^\k_0:=N_{\eps_\k} \pf \tilde\g^\k_0=N_{\eps_\k} \pf \g_0
  ,\qquad
  \o^\k_0:=\tilde\o^\k_0+\hat\o^\k_0
  ,\qquad
  \rho^\k_0:=(\o^\k_0,0).
  $$
  The whole idea will consist below in connecting
  $$
  \tilde \g^\k_0=\g_0=\int_\G \delta_y\rd \g_0(y)
  \qqtext{and}
  \hat\o^\k_0=\int_\G \delta_{y_\eps}\rd \g_0(y)
  $$
  using a superposition of geodesics between point-masses from section~\ref{sec:explicit_geodesics_dirac_masses}, each starting at $y\in\O$ and ending at $y_\eps$ with infinitesimal mass $\rd \g_0(y)$.
  The dynamics of the resulting path will take place entirely inside $\G^\eps$, everything else will remain fixed outside, and it will be enough to estimate the cost of this path inside $\G^\eps$ to control $\Wrr^2(\tilde\rho^\k_0,\rho^\k_0)$.
  
  For $\g_0$-a.e. all $y\in\G$ let $\rho^{\k y}=(\o^{\k y},\g^{\k y})$ be the $\Wrr$ geodesic from $(0,\delta_y)$ to $(\delta_{y_\eps},0)$ constructed explicitly in Section~\ref{sec:explicit_geodesics_dirac_masses}, see Theorem~\ref{theo:explicit_1D_geodesics} up to time reversal.
  (This geodesic is well-defined since for all $y$ the segment $I^y=[y_\eps,y)$ remains included in $\O$ if $\eps$ is small enough.)
  This geodesic required no motion on the boundary, $\mu^{\k y}=(\o^{\k y},F^{\k y},\g^{\k y},0,f^{\k y})$, and was supported on $[0,1]\times I^y$.
  By linear superposition it is easy to check that
  $$
  \o^\k:=\tilde\o_0^\k\rd t + \int_\G \o^{\k y}\rd \g_0(y),
  \qquad 
  \g^\k:=\int_\G \g^{\k y}\rd \g_0(y)
 $$
 $$
  F^\k:=\int_\G F^{\k y}\rd \g_0(y),
  \qquad 
  G^\k:=0,
  \qquad 
  f^\k:=\int_\G f^{\k y}\rd \g_0(y)
  $$
  solve $\partial_t\o^\k +\dive F^\k =0$ with flux $f^\k$ as well as $\partial_t\g^\k + \dive G^\k =f^\k$, simply because $\o^{\k y},F^{\k y}$ and $\g^{\k y},G^{\k y},f^{\k y}$ do so for all $y$.
  Writing
  \begin{align*}
  & \o^\k|_{t=0}
  =\tilde\o_0^\k + \int_\G \o^{\k y}|_{t=0}\rd \g_0(y)
  =\tilde\o_0^\k + \int_\G 0
  =\tilde\o_0^\k,
  \\
  &\o^\k|_{t=1}
  =\tilde\o_0^\k + \int_\G \o^{\k y}|_{t=1}\rd \g_0(y)
  =\tilde\o_0^\k +  \int_\G \delta_{y_\eps}\rd \g_0(y)
  =\o^\k_0+\hat\o^\k_0=\o^\k_0
  \end{align*}
and
 \begin{align*}
  & \g^\k|_{t=0}
  = \int_\G \g^{\k y}|_{t=0}\rd \g_0(y)
  =\int_\G \delta_y\rd \g_0(y)
  =\g_0=\tilde\g^\k_0
  \\
  & \g^\k|_{t=1}
  = \int_\G \g^{\k y}|_{t=1}\rd \g_0(y)
  =\int_\G 0
  =0
  \end{align*}
  we see that this curve interpolates between $\tilde\rho^\k_0=(\tilde\o^\k_0,\tilde\g^k_0)$ and $\rho^\k_0=(\o^\k_0,0)$, and therefore $\mu^\k=(\o^\k,F^\k,\g^\k,0,f^\k)\in\CE(\tilde\rho^\k_0,\rho^\k_0)$.
  This gives an admissible competitor for the minimization problem defining $\Wrr^2(\tilde\rho^\k_0,\rho^\k_0)$, and because the action $\A$ is convex and $1$-homogeneous we have
  $$
  \Wrr^2(\tilde \rho^\k_0,\rho^\k_0)
  \leq
  \A(\mu^\k)
  =\A\left(\int_\G \mu^{\k y}\rd\g_0(y)\right)
  \leq 
  \int_\G\A(\mu^{\k y})\rd \g_0(y).
$$
Since $(\o^{\k y},F^{\k y},\g^{\k y},0,f^{\k y})$ is a geodesic between $\delta_y$ and $\delta_{y_\eps}$ we can apply Theorem \ref{theo:explicit_1D_geodesics} and \eqref{eq:explicit_Wrr_1D_diracs} with $R=|y_\eps-y|=\eps=\eps_\k$ to compute explicitly $\A(\mu^{\k y})=\Wrr^2((0,\delta_y),(\delta_{y_{\eps_\k}},0))$, resulting in
 \begin{equation}
     \label{eq:Wrr_rho_tilde_rho_leq_k}
\Wrr^2(\tilde \rho^\k_0,\rho^\k_0)
\leq \int_\G\left(
\frac{1}{2}(\eps_\k^2 +\k^2\alpha_\k)\frac{\alpha_\k}{\alpha_\k-1}
\right)\rd \g_0(y)
\leq 
\frac{1}{2}(\eps_\k^2 +\k^2\alpha_\k)\frac{\alpha_\k}{\alpha_\k-1}
\end{equation}
with $\alpha_\k=1+\sqrt{1+\frac{\eps_\k}{\k}}$.
  \end{enumerate}
  Taking $\eps_\k=\k$ gives $\alpha_\k= 1+\sqrt 2$ and $\Wrr^2(\tilde \rho^\k_0,\rho^\k_0)=\mathcal O(\k^2)$, hence from \eqref{eq:Wrr_rho_tilde_rho_leq_eps}\eqref{eq:Wrr_rho_tilde_rho_leq_k}
$$
\Wrr(\rho_0,\rho^\k_0)
\leq 
\Wrr(\rho_0,\tilde\rho^\k_0)
+\Wrr(\tilde\rho^\k_0,\rho^\k_0)
=\mathcal O(\k)\to 0.
$$
The very same construction allows to perturb the terminal endpoint $\Wrr(\rho_1,\rho^\k_1)=\mathcal O(\k)\to 0$ as well.
In order to fully establish \eqref{eq:Wrr_approx_W_perturb_endpoints_interiors} it remains to check that $\Wrr(\rho^\k_0,\rho^\k_1)\to\W^2_\bO(\varrho_0,\varrho_1)$.
Because $\rho^\k_0=(\o^\k_0,0),\rho^\k_1=(\o^\k_1,0)$ are supported away from the boundary, Proposition~\ref{prop:Wrr=W_no_boundary_mass} gives first $\Wrr(\rho^\k_0,\rho^\k_1)=\W_\bO^2(\o^\k_0,\o^\k_1)=\W^2_\bO(\varrho^k_0,\varrho^\k_1)$.
Next, since the previous construction gives $\Wrr(\rho_0^\k,\rho_0)\to 0$ and $\Wrr(\rho^\k_1,\rho_1)\to 0$, Theorem~\ref{theo:double_narrow_cv} guarantees that $\o^\k_0=\varrho^\k_0\narrowcv\varrho_0$ and $\o^\k_1=\varrho^\k_1\narrowcv\varrho_1$ in $\bO$.
We conclude by recalling that the Wasserstein distance metrizes the narrow convergence, hence $\Wrr^2(\rho^\k_0,\rho^\k_1)=\W^2_\bO(\varrho^\k_0,\varrho^\k_1)\to \W^2_\bO(\varrho_0,\varrho_1)$ and \eqref{eq:Wrr_cv_to_W} follows.

Let us now focus on the convergence of the geodesics themselves.
  By monotonicity in $\k$ (Proposition~\ref{prop:monotonicity_k}) we control
  $$
 \iint_\QO\frac{|F^\k|^2}{2\o^\k}
  +\iint_\QG \frac{|G^\k|^2}{2\g^\k}
  \leq \Wrr^2(\rho_0,\rho_1)
  \leq \mathcal W_{\k_0}^2(\rho_0,\rho_1)<+\infty
  $$
  for small $\k\leq \k_0$.
  By our favorite Jensen's inequality, with as usual $\TV{\o^\k},\TV{\g^\k}\leq 1$, we get the total variation estimate
  $$
  \TV{F^\k}^2 + \TV{G^\k}^2 \leq C
  $$
  uniformly in $\k\to 0$.
  Since the total variation of the extension $\TV{\bar G^\k}=\TV{G^\k}$ we see that, up to a subsequence if needed, $F^\k\narrowcv F$, $\bar G^\k\narrowcv \bar G$ for some limits $F,\bar G\in\M(\QO)^d$ and
  $$
  H^\k:=F^\k+\bar G^\k \narrowcv F+\bar G=: H
  \qquad \mbox{in }\M(\QO)^d.
  $$
  Since $\TV{\varrho^k}=1$ we have $\varrho^k\narrowcv\varrho$ as well, for some limit $\varrho\in\P(\QO)$.
  From Proposition~\ref{prop:CE_rho} we know that $\partial_t\varrho^\k +\dive H^\k=0$ with zero flux, hence the limit automatically solves $\partial_t\varrho +\dive H=0$ with endpoints $\varrho_0,\varrho_1$.
  Moreover by definition \eqref{eq:def_W2_rho_bO} of the Wasserstein distance and lower-semicontinuity there holds
  $$
  \W^2_\bO(\varrho_0,\varrho_1)
  \leq \iint_\QO \frac{|H|^2}{2\varrho}
  \leq \liminf\limits_{\k\to 0} \iint_\QO \frac{|H^\k|^2}{2\varrho^\k}.
  $$
  Being convex and $1$-homogeneous, the map $(\varrho,H)\mapsto \iint_\QO \frac{|H|^2}{2\varrho}$ is subadditive.
  Hence for fixed $\k>0$
  \begin{multline*}
  \iint_\QO \frac{|H^\k|^2}{2\varrho^\k}
  =\iint_\QO \frac{|F^\k+\bar G^\k|^2}{2(\o^\k+\g^\k)}
  \\
  \leq \iint_\QO \frac{|F^\k|^2}{2\o^\k}  + \iint_\QO \frac{|\bar G^\k|^2}{2\g^\k}
  = \iint_\QO \frac{|F^\k|^2}{2\o^\k}  + \iint_\QG \frac{|G^\k|^2}{2\g^\k}
  \\
  \leq \iint_\QO \frac{|F^\k|^2}{2\o^\k}  + \iint_\QG \frac{|G^\k|^2}{2\g^\k} +\k^2 \iint_\QG \frac{|f^\k|^2}{2\g^\k} 
  =\Wrr^2(\rho_0,\rho_1).
  \end{multline*}
  By the first step of the proof these two inequalities give altogether
  $$
  \W^2_\bO(\varrho_0,\varrho_1)
  \leq \iint_\QO \frac{|H|^2}{2\rho}
  \leq \liminf\limits_{\k\to 0} \Wrr^2(\rho_0,\rho_1)
  \overset{\eqref{eq:Wrr_cv_to_W}}{=}
  \W^2_\bO(\varrho_0,\varrho_1).
  $$
  As a consequence $\W^2_\bO(\varrho_0,\varrho_1)=\iint_\QO \frac{|H|^2}{2\varrho}$, meaning that the pair $(\varrho,H)$ is a Wasserstein geodesic.
\end{proof}
%

\section{Riemannian formalism}
\label{sec:Riemannian_formalism}
Theorem~\ref{theo:characterization_geodesics} strongly suggests that geodesics should be characterized by \eqref{eq:FGf_geodesics_potential_phi_psi}\eqref{eq:assumption_phi_subsol}\eqref{eq:assumption_psi_subsol}, or, written more concisely (and ignoring all the regularity and vacuum issues):
\begin{equation}
  \label{eq:geodesics_CE}
\left\{
\begin{array}{ll}
  \partial_t \o +\dive(\o\nabla\phi)=0  &  \mbox{in }\O
  \\
  \o\nabla\phi\cdot n =\g\frac{\psi-\phi}{\k^2}  & \mbox{in }\partial\O
\end{array}
\right.
\qqtext{and}
\partial_t \g + \dive(\g \nabla\psi)=\g \frac{\psi-\phi}{\k^2}    \quad\mbox{in }\G
\end{equation}
for some potentials $\phi,\psi$ satisfying the system of Hamilton-Jacobi equations
\begin{equation}
  \label{eq:geodesics_HJ_formal}
\left\{
\begin{array}{ll}
  \partial_t\phi+\frac 12|\nabla\phi|^2 =0   &   \mbox{in }\O\
  \\
  \o\nabla\phi\cdot n =\g \frac{\psi-\phi}{\k^2} & \mbox{in }\partial\O
\end{array}
\qqtext{and}
\partial_t\psi+\frac 12 |\nabla\psi|^2+\frac 1{2\k^2}|\psi-\phi|^2=0  \quad \mbox{in }\G
\right..
\end{equation}
As in Otto's formalism for the Wasserstein setting \cite{otto2001geometry}, we wish now to view $\Pp(\bO)$ as a formal Riemannian manifold, whose Riemannian distance should agree with $\Wrr$.
In other words, we would like to define a scalar product $<.,.>_\rho$ and a norm $\|.\|_\rho$ in the tangent space $T_\rho\Pp(\bO)$ at each point $\rho\in\Pp(\bO)$ so that
\begin{equation}
  \label{eq:Wrr=int_norm_rho_t}
  \Wrr^2(\rho_0,\rho_1)=\inf\left\{\int_0^1 \|\partial_t\rho_t\|^2_{\rho_t}\rd t
  :\qquad \rho=(\rho_t)_{t\in[0,1]} \mbox{ has endpoints }
  \rho_{0},\rho_{1}
  \right\}. 
\end{equation}

\subsection{Scalar product}
Here we argue at a static level.
The measures $F,G,f$ appearing below are measures acting in space only, and can be thought as generating the infinitesimal variations $\partial_t\rho=(\partial_t\o,\partial_t\g)\in T_\rho\Pp(\bO)$ given by $\partial_t\o+\dive F=0$ and $\partial_t\g+\dive G=f$ (with our usual flux condition $F\cdot n=f$).
Formula \eqref{eq:geodesics_CE} strongly suggests that such a given tangent vector $\partial_t\rho=(\partial_t\o,\partial_t\g) $ should identify with a unique pair of potentials $(\phi,\psi)$ solving the elliptic system
\begin{equation}
  \label{eq:identification_rho_t_phi_psi}
\left\{
\begin{array}{ll}
  -\dive(\o\nabla\phi)=\partial_t \o   &  \mbox{in }\O
  \\
  \o\nabla\phi\cdot n =\g\frac{\psi-\phi}{\k^2})  & \mbox{in }\partial\O
\end{array}
\right.
\qqtext{and}
-\dive(\g \nabla\psi)+\g \frac{\psi-\phi}{\k^2}=\partial_t \g     \quad\mbox{in }\G.
\end{equation}
Moreover in our of construction of $\Wrr$ we decided to measure the dissipation as $\frac{|u|^2}{2\o} + \frac{|v|^2+\k^2r^2}{2}\g$ in the ``velocity'' variables $u=F/\o,v=G/\g,r=f/\g$.
Expressed in terms of the potentials $\phi,\psi$ with $u=\nabla\phi$, $v=\nabla\psi$, $r=\g\frac{\psi-\phi}{\k^2}$ this naturally suggests the squared norm
\begin{equation}
\label{eq:norm_phi_psi}
\|\partial_t\rho\|^2_{\rho}
:= \frac 12 \int_\bO|\nabla\phi|^2\rd\o
+\frac{1}{2}\int_\G|\nabla\psi|^2\rd\g
+\frac 1{2\k^2} \int_\G |\psi-\phi|^2\rd \g.
\end{equation}
By polarization the scalar product between two tangent vectors $\partial_t\rho^1=(\partial_t\o^1,\partial_t\g^1)$ and $\partial_t\rho^2=(\partial_t\o^2,\partial_t\g^2)$ is simply
\begin{equation}
\label{eq:def_prod_scal}
\langle\partial_t\rho^1,\partial_t\rho^1\rangle_\rho :=
\frac{1}{2}\int_\O \nabla\phi^1\cdot\nabla\phi^2\,\rd\o
+ \frac{1}{2}\int_\G \nabla\psi^1\cdot\nabla\psi^2\,\rd\g
+ \frac{1}{2\k^2}\int_\G (\psi^1-\phi^1)(\psi^2-\phi^2)\,\rd\g
\end{equation}
with the corresponding identifications $\partial_t\rho^i=(\partial_t\o^i,\partial_t\g^i)\leftrightarrow (\phi^i,\psi^i)$.
This is fortunately consistent with a variational representation in the momentum variables:
\begin{lem}
  For a given $\partial_t\rho=(\partial_t\o,\partial_t\g)$ let $\|\partial_t\rho\|^2_\rho$ be given by \eqref{eq:identification_rho_t_phi_psi}\eqref{eq:norm_phi_psi}.
 Then 
 \begin{multline}
 \label{eq:norm_tangent_momentum}
\|\partial_t\rho\|^2_\rho=
\min\limits_{F,G,f}\Bigg\{
\frac 12\int_\bO \frac{|F|^2}{\o}
+ \frac 12\int_\G \frac{|G|^2}{\g}
+ \frac {\k^2}2\int_\G \frac{|f|^2}{\gamma}
\\
\mbox{s.t.}\qquad
-\dive F=\partial_t\o,\quad 
-\dive G+f=\partial_t\g,\quad
f=F\cdot n|_{\partial\O}
\Bigg\}.
\end{multline}
\end{lem}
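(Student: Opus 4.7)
The plan is to establish the identity by exhibiting an explicit minimizer and showing that no admissible triple does better. By \eqref{eq:identification_rho_t_phi_psi}, the natural candidate is
\[
F^{*}:=\o\nabla\phi,\qquad G^{*}:=\g\nabla\psi,\qquad f^{*}:=\g\,\frac{\psi-\phi}{\k^{2}},
\]
which is admissible by direct substitution into the constraints, and whose action is
\[
\mathcal A(F^*,G^*,f^*)=\frac12\int_\O|\nabla\phi|^2\rd\o+\frac12\int_\G|\nabla\psi|^2\rd\g+\frac1{2\k^2}\int_\G|\psi-\phi|^2\rd\g=\|\partial_t\rho\|^2_\rho
\]
by \eqref{eq:norm_phi_psi}. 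So it suffices to prove $\mathcal A(F,G,f)\geq\|\partial_t\rho\|^2_\rho$ for any other admissible triple.

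The key computation is to test the constraints against $(\phi,\psi)$ in two different ways. Integrating by parts against the constraints $-\dive F=\partial_t\o$ in $\O$ with $F\cdot n=f$ on $\pO$, and $-\dive G+f=\partial_t\g$ on $\G$ (which has no boundary), yields
\[
\int_\bO\phi\,\rd(\partial_t\o)+\int_\G\psi\,\rd(\partial_t\g)=\int_\O\nabla\phi\cdot\rd F+\int_\G\nabla\psi\cdot\rd G+\int_\G(\psi-\phi)\,\rd f.
\]
Doing the same integration by parts against the elliptic system \eqref{eq:identification_rho_t_phi_psi} (equivalently, applying the identity above to $(F^*,G^*,f^*)$) evaluates the left-hand side to precisely $2\|\partial_t\rho\|^2_\rho$. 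Combining,
\[
2\|\partial_t\rho\|^2_\rho=\int_\O\nabla\phi\cdot\rd F+\int_\G\nabla\psi\cdot\rd G+\int_\G(\psi-\phi)\,\rd f.
\]

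Finally, estimate the three terms on the right by Cauchy--Schwarz in $L^{2}(\o)$, $L^{2}(\g)$, and $L^{2}(\g)$ respectively, writing the third pairing as $\frac{\psi-\phi}{\k}\cdot\frac{\k f}{\g}$ so that the $\k$-weights match:
\[
\int_\O\nabla\phi\cdot\rd F\leq\sqrt{a}\sqrt{A},\quad\int_\G\nabla\psi\cdot\rd G\leq\sqrt{b}\sqrt{B},\quad\int_\G(\psi-\phi)\,\rd f\leq\sqrt{c}\sqrt{C},
\]
with $(a,b,c)$ the three summands of $2\|\partial_t\rho\|^2_\rho$ and $(A,B,C)$ those of $2\mathcal A(F,G,f)$. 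A second Cauchy--Schwarz in $\R^{3}$ gives $\sqrt{aA}+\sqrt{bB}+\sqrt{cC}\leq\sqrt{a+b+c}\,\sqrt{A+B+C}$, so that $a+b+c\leq\sqrt{a+b+c}\,\sqrt{A+B+C}$, which squares to the desired bound $\|\partial_t\rho\|^2_\rho\leq\mathcal A(F,G,f)$. Equality holds when the three Cauchy--Schwarz bounds saturate simultaneously, which forces $(F,G,f)$ proportional to $(F^*,G^*,f^*)$, and the admissibility constraint fixes the proportionality to $1$, giving uniqueness of the minimizer.

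I do not expect any serious obstacle here: the main subtlety is purely one of regularity, namely that the integrations by parts require $\o,\g,\phi,\psi$ to be smooth enough and that $\o$ not charge $\pO$ (otherwise one must track an extra boundary contribution $\partial_t\o|_{\pO}=F\cdot n-f$). Since the whole section is explicitly stated to be a heuristic Riemannian formalism, these regularity assumptions are harmless, and the argument goes through verbatim at the formal level.
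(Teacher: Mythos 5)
Your proof is correct, but it takes a genuinely different route from the paper. The paper proceeds constructively: it first minimizes over $F$ for fixed $f$ (obtaining $F=\o\nabla\phi$ via an elliptic problem with Neumann data $f$), then over $G$ for fixed $f$ (obtaining $G=\g\nabla\psi$), and finally performs an Euler--Lagrange first variation in $f$, exploiting the fact that the intermediate value functions $E_\O(f),E_\G(f)$ depend affinely on $f$ through the elliptic solution maps; the stationarity condition then \emph{derives} $f=\g(\psi-\phi)/\k^2$. You instead \emph{guess} the minimizer $(F^*,G^*,f^*)$ from the elliptic system \eqref{eq:identification_rho_t_phi_psi}, verify its action equals $\|\partial_t\rho\|^2_\rho$, and prove the lower bound by pairing the continuity constraints with $(\phi,\psi)$ to get $2\|\partial_t\rho\|^2_\rho=\int\nabla\phi\cdot\rd F+\int\nabla\psi\cdot\rd G+\int(\psi-\phi)\,\rd f$ for every admissible triple, closing with a two-stage Cauchy--Schwarz. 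This is the classical Benamou--Brenier-style duality argument, and it is cleaner once the candidate is known: no nested optimization, no first-variation bookkeeping, and you get uniqueness of the minimizer almost for free. What the paper's approach buys is self-containedness — it discovers the form of the minimizer rather than assuming it, which is more in the spirit of a derivation — at the cost of being somewhat more computational. Your observation that the argument is only formal (regularity of $\phi,\psi$, whether $\o$ charges $\pO$) is exactly the right caveat, and consistent with the paper's own framing of the section as heuristic.
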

\noindent
All regularity issues left aside, this is precisely what allows to recast the definition \eqref{eq:def_Wrr} of $\Wrr$ as \eqref{eq:Wrr=int_norm_rho_t} and justifies the (formal) Riemannian point of view.
\begin{proof}
  For convenience we choose to use the flux $f$ as a primary variable, and we will compute a first variation with respect to $f$ in order to extract some information about the global minimizers $(F ,G ,f )$.
  To this end let us define first the auxiliary functionals
  $$
  E_{\O}(f):=\min\limits_F\Bigg\{
  \frac{1}{2}\int_\bO\frac{|F|^2}{\o}
  :\qquad
  -\dive(F)=\partial_t \o
  \qtext{and}
  F\cdot n=f
  \Bigg\}
  $$
  $$
  E_{\G}(f):=\min\limits_G\Bigg\{
  \frac{1}{2}\int_\G\frac{|G|^2}{\g}:
  \qquad
  -\dive(G)=\partial_t \g-f
  \Bigg\}.
  $$
  It is clear that
  \begin{equation}
  \label{eq:norm_f_primary}
  \|\partial_t\rho\|^2_\rho=\inf\limits_f \Bigg\{E_{\O}(f)+E_\G(f) +\frac{\k^2}{2}\int_\G\frac{f^2}{\g}
  \Bigg\},
  \end{equation}
  and we shall compute the first variation of each term below.
  \\
  A straightforward and classical computation first shows that, for an arbitrary $f$, the unique minimizer $F=F[f]$ in the definition of $E_\O$ is obtained in potential form by solving the elliptic equation for $\phi$
  $$
  F[f]=\o\nabla\phi
  \qqtext{with}
  \left\{
  \begin{array}{ll}
  -\dive (\o\nabla\phi)=\partial_t\o & \mbox{in }\O\\
  \o\nabla\phi\cdot n =f & \mbox{in }\partial\O
  \end{array}
  \right.
  $$
  Observe that, given $\partial_t\o$, this problem is affine in $f$.
  Fixing $f $ and varying $f_\eps=f +\eps \tilde f$ for arbitrary directions $\tilde f$, the corresponding $F_\eps$ is therefore of the form
  $$
  F_\eps=F +\eps \tilde F=\o\left(\nabla\phi +\eps\nabla\tilde\phi\right)
  \qquad \mbox{with} \qquad
  \left\{
  \begin{array}{ll}
  -\dive (\o\nabla\tilde\phi)=0 & \mbox{in }\O\\
  \o\nabla\tilde\phi\cdot n =\tilde f & \mbox{in }\partial\O
  \end{array}
  \right.
  .
  $$
  As a consequence we have the first variation formula
  \begin{equation}
  \label{eq:DE_O}
  \left.\frac{d}{d\eps} \right|_{\eps=0} E_\O(f_\eps)
  =\int_\O\frac{F }{\o}\cdot \tilde F
  =\int_\O\nabla\phi \cdot\nabla\tilde\phi \,\o
  =-\int_\O \phi  \underbrace{\dive(\o\nabla\tilde\phi)}_{=0}+\int_{\partial\O}\phi \underbrace{\o\nabla\tilde\phi\cdot n}_{=\tilde f}
  =\int_{\partial\O}\phi \tilde f.
  \end{equation}
  Similarly, the minimizer $G=G[f]$ for $E_\G$ is obtained by solving
  $$
  G[f]=\g\nabla\psi
  \qquad
  \mbox{with}\qquad
  -\dive (\g\nabla\psi)=\partial_t\g -f  \mbox{ in }\G,
  $$
  which is again affine in $f$.
  Fixing $f $ and varying $f_\eps=f +\eps \tilde f$, the corresponding $G_\eps$ is now obtained as
  $$
  G_\eps=G +\eps \tilde G=\o\left(\nabla\psi +\eps\nabla\tilde\psi\right)
  \qquad
  \mbox{with}\qquad
  \dive (\g\nabla\tilde\psi)=\tilde f  \mbox{ in }\G
  $$
  and the first variation reads
  \begin{equation}
  \label{eq:DE_G}
  \left.\frac{d}{d\eps} \right|_{\eps=0} E_\G(f_\eps)
  =\int_\G\frac{G }{\g}\cdot \tilde G
  =\int_\G\nabla\psi \cdot\nabla\tilde\psi \,\g
  =-\int_\G \psi  \dive(\g \nabla\tilde\psi) 
  =-\int_\G \psi  \tilde f.
  \end{equation}
  Going back to \eqref{eq:norm_f_primary}, let $f$ be the minimizer.
  Choosing an arbitrary directions $\tilde f$ to perturb $f_\eps =f+\eps\tilde f$ and exploiting \eqref{eq:DE_O}\eqref{eq:DE_G}, we get the first order optimality condition
  \begin{equation*}
  0=\left .\frac{d}{d\eps}\right|_{\eps=0}  
  \left(
  E_{\O}(f_\eps)+ E_\G(f_\eps)+\frac{\k^2}{2}\int_{\G}\frac{|f_\eps|^2}{\g}
  \right)
  = \int_{\partial\O} \phi \tilde f -\int_\G \psi  \tilde f + \k^2\int_\G \frac{f }{\g}\tilde f.
  \end{equation*}
  Since $\tilde f$ was arbitrary this simply means
  $$
  f =\gamma\frac{\psi -\phi }{\k^2}.
  $$
  In other words, the minimizer $(F,G,f)$ in the initial problem \eqref{eq:norm_tangent_momentum} is characterized by
  $$
  F=\o\nabla \phi,
  \qquad
  G=\g\nabla\psi,
  \qquad
  f=\gamma\frac{\psi -\phi }{\k^2},
  $$
  where the pair of Kantorovich potentials $\phi,\psi$ should solve the elliptic system
  \eqref{eq:identification_rho_t_phi_psi}.
  Evaluating the right-hand side of \eqref{eq:norm_tangent_momentum} for these optimal values gives exactly \eqref{eq:norm_phi_psi} and the proof is complete.
\end{proof}
Let us now check that the previous computations are consistent with the metric notion of constant-speed geodesics in Proposition~\ref{prop:cst_speed}:
\begin{lem}
  Let $(\o,\g)$ and $(\phi,\psi)$ solve \eqref{eq:geodesics_CE}\eqref{eq:geodesics_HJ_formal} for $t\in[0,1]$.
  Then $\|\partial_t \rho_t\|^2_{\rho_t}=cst$.
\end{lem}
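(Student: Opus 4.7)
The plan is to differentiate $E(t):=\|\partial_t\rho_t\|^2_{\rho_t}$ in time and show that the time derivative vanishes identically using \eqref{eq:geodesics_CE}\eqref{eq:geodesics_HJ_formal} combined with integration by parts. Abbreviate
$$
e_\O:=\tfrac 12|\nabla\phi|^2,\qquad e_\G:=\tfrac 12|\nabla\psi|^2+\tfrac{\k^2}{2}R^2,\qquad R:=\tfrac{\psi-\phi}{\k^2},
$$
so that $\partial_t\phi=-e_\O$ and $\partial_t\psi=-e_\G$ by \eqref{eq:geodesics_HJ_formal}, and decompose $E=E_1+E_2+E_3$ with
$$
E_1:=\int_\O e_\O\,\rd\o,\qquad E_2:=\int_\G\tfrac 12|\nabla\psi|^2\,\rd\g,\qquad E_3:=\tfrac{\k^2}{2}\int_\G R^2\,\rd\g.
$$
(For simplicity I shall pretend below that $\o$ is supported in $\O$ and $\g$ in $\G$, and that all fields are smooth; the general case follows by approximation.)

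First I would compute $\frac{d}{dt}E_1$. Using $\nabla\partial_t\phi=-\nabla e_\O$ and the continuity equation $\partial_t\o=-\dive(\o\nabla\phi)$ in $\O$ together with the flux condition $\o\nabla\phi\cdot n=\g R$ on $\pO$, an integration by parts on $\O$ gives
$$
\frac{d}{dt}E_1 \;=\; \int_\O \nabla\phi\cdot\nabla\partial_t\phi\;\rd\o + \int_\O e_\O\,\partial_t\o \;=\; -\int_{\pO} e_\O\,\g R,
$$
all interior contributions cancelling after the parts-integration thanks to $e_\O=\tfrac 12|\nabla\phi|^2$ appearing in both terms.

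Next I would compute $\frac{d}{dt}(E_2+E_3)$ on the closed manifold $\G$ (no boundary terms arise on $\G$). Using $\partial_t\psi=-e_\G$ and $\partial_t\g=-\diveg(\g\nabla\psi)+\g R$, integrating by parts on $\G$ yields
$$
\frac{d}{dt}E_2 \;=\; \int_\G e_\G\,\g R \;-\; \tfrac{\k^2}{2}\int_\G R^2\,\partial_t\g,
$$
where the $\tfrac 12|\nabla\psi|^2$ piece of $e_\G$ combines with the kinetic term in $\partial_t\g$ to produce the final integrand. For $E_3$ the identity $\k^2\partial_tR=\partial_t\psi-\partial_t\phi|_{\pO}=e_\O-e_\G$ (using again the HJ equations, now on $\pO$) gives
$$
\frac{d}{dt}E_3 \;=\; \int_\G R(e_\O-e_\G)\,\g \;+\; \tfrac{\k^2}{2}\int_\G R^2\,\partial_t\g.
$$
The two $\tfrac{\k^2}{2}\int R^2\partial_t\g$ terms cancel and the $e_\G$ pieces do too, leaving $\frac{d}{dt}(E_2+E_3)=\int_\G R\,e_\O\,\g$. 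Combined with the expression for $\frac{d}{dt}E_1$ this is exactly the opposite of the boundary contribution on $\pO$, so $\frac{d}{dt}E\equiv 0$ as claimed.

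The main obstacle is regularity: as already noted after Theorem~\ref{theo:characterization_geodesics} (and illustrated by \eqref{eq:interpolant_1d_o} with its $1/t$ factor), $\phi,\psi$ need not be smooth up to the endpoints, $\o$ may charge the boundary, and none of the integrations by parts above is literally justified in the measure-theoretic framework. A fully rigorous argument would likely either assume smoothness (the present statement is advertised as part of the heuristic Riemannian discussion and so this is acceptable), approximate by mollification in $(t,x)$, or use a weak formulation testing the continuity equations against the time-dependent test functions $\tfrac 12|\nabla\phi|^2$, $\tfrac 12|\nabla\psi|^2+\tfrac{\k^2}{2}R^2$ and keeping track of the resulting boundary trace on $\pO=\G$ through the flux relation $F\cdot n=\g R=f$.
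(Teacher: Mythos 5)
Your proof is correct and follows essentially the same route as the paper's: differentiate the three pieces of $\|\partial_t\rho_t\|^2_{\rho_t}$ in time, substitute the continuity and Hamilton--Jacobi equations, integrate by parts on $\O$ and on the closed manifold $\G$, and observe that the boundary flux terms on $\pO=\G$ cancel against each other. Your use of the abbreviations $e_\O$, $e_\G$, $R$ merely streamlines the bookkeeping relative to the paper's expanded expressions for $A$, $B$, $C$, and your closing caveat about regularity is apt given the section's explicitly heuristic framing.
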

\begin{proof}
We compute
\begin{multline*}
  \frac{d}{dt}\|\partial_t \rho_t\|^2_{\rho_t}
  =\frac{d}{dt}\left(
  \frac 12 \int_\O|\nabla\phi_t|^2\, \o_t
+\frac{1}{2}\int_\G|\nabla\psi_t|^2\, \g_t
+\frac 1{2\k^2} \int_\G |\psi_t-\phi_t|^2\, \g_t
  \right)\\
  =
  \underbrace{\left[
\frac{1}{2}\int_\O\partial_t\o_t|\nabla\phi_t|^2  +  \int_\O\o_t\nabla\phi_t\cdot\nabla\partial_t\phi_t
\right]}_{:=A}
+ \underbrace{ \left[
\frac{1}{2}\int_\G\partial_t\g_t|\nabla\psi_t|^2  +  \int_\G \g_t\nabla\psi_t\cdot\nabla\partial_t\psi_t
\right]}_{:=B}\\
+\underbrace{  \frac{1}{\k^2}\left[
  \frac{1}{2}\int_\G\partial_t\g_t|\psi_t-\phi_t|^2  +  \int_\G \g_t(\psi_t-\phi_t)\partial_t(\psi_t-\phi_t)
\right]}_{:=C}
\end{multline*}
Using the $\o,\phi$ equations in \eqref{eq:geodesics_CE}\eqref{eq:geodesics_HJ_formal}, some rather tedious but straightforward computations and integrations by parts give
\begin{align*}
 & A= -\frac 12 \int_{\partial\O}|\nabla\phi_t|^2 f_t,
 \\
 & B= \frac 12 \int_\G |\nabla\psi_t|^2f_t  - \frac 1{2\k^2} \int_\G\nabla\psi_t\cdot \nabla\left((\psi_t-\phi_t)^2\right)\g_t,
 \\
 & C=\frac{1}{2\k^2}\int_\G \nabla\psi_t\cdot\nabla\left((\psi_t-\phi_t)^2\right)\g_t 
+ \frac 12 \int_\G  |\nabla\phi_t|^2f_t - \frac 12 \int_\G  |\nabla\psi_t|^2 f_t,
\end{align*}
whence the desired cancellation $\frac{d}{dt}\|\partial_t \rho_t\|^2_{\rho_t} = A+B+C
=0$.
\end{proof}

\subsection{Gradients}
With a Riemannian metric at hand we can now try to make sense of Riemannian gradients for functionals $\E:\Pp(\bO)\to\R$.
For simplicity let us restrict here to energy functionals of the form bulk + interface
$$
\E(\rho)
=\E_\O (\o)+\E_\G(\g)
:=\int_\O E_\O(\o(x))\rd x +\int_\G E_\G(\g(x))\rd x,
$$
with the convention that $\E_\O=+\infty$ or $\E_\G=+\infty$ whenever $\o$ or $\g$ are not absolutely continuous w.r.t. the Lebesgue measure on $\O,\G$, respectively.
\\

The gradient $\grad_{\Wrr} \E$ can be computed by a (formal) chain rule as follows.
Let $\rho_t=(\o_t,\g_t)$ be a curve defined for times $t\in(-\eps,\eps)$ with $\rho(0)=\rho$, solving the continuity equations \eqref{eq:geodesics_CE} for some fixed $\phi(x),\psi(x)$ representing the tangent vector $\partial_t\rho(0)$ at the base-point $\rho\in \Pp(\bO)$.
Then
\begin{multline*}
  \left\langle \grad_{\Wrr} \E(\rho),\partial_t\rho\right\rangle_\rho
  =
  \left.\frac{d}{dt}\right|_{t=0}\E(\rho_t)
  =
  \left.\frac{d}{dt}\right|_{t=0}\int_\O E_\O(\o_t(x))\rd x
  +\left.\frac{d}{dt}\right|_{t=0}\int_\G E_\G(\g_t(x))\rd x
  \\
  =\int_\O E_\O'(\o)\partial_t\o
  +\int_\G E_\G'(\g)\partial_t\g
  =\int_\O E_\O'(\o)\{-\dive(\o\nabla\phi)\}
  +\int_\G E_\G'(\g)\{-\dive(\g\nabla\psi)+f\}
  \\
  =
  \Bigg(
  \int_\O \nabla E_\O'(\o)\cdot\o\nabla \phi
  -
  \int_{\partial\O} E_\O'(\o) \underbrace{\o \nabla\phi\cdot n}_{=f}
  \Bigg)
  + 
  \int_\G\{\g \nabla E_\G'(\g) \cdot \nabla\psi + E_\G'(\g) f\}
  \\
  = 
  \int_\O \o \nabla E_\O'(\o)\cdot\nabla \phi 
  +
  \int_\G\g \nabla E_\G'(\g) \cdot \nabla\psi 
  +
  \int _\G (E_\G'(\g)-E_\O'(\o)) f
  \\
  = 
  \int_\O \nabla E_\O'(\o)\cdot\nabla \phi \,\rd\o
  +
  \int_\G \nabla E_\G'(\g) \cdot \nabla\psi \,\rd\g
  +
  \frac{1}{\k^2}\int _\G \left(E_\G'(\g)-E_\O'(\o)\right)\cdot (\psi-\phi)\,\rd\g.
\end{multline*}
By definition \eqref{eq:def_prod_scal} of the scalar product this immediately identifies the object $\grad_{\Wrr}  \E(\rho)$ as the pair of potentials
$$
(\phi,\psi)=\big(E_\O'(\o),E_\G'(\g)\big)
$$
given by the usual ($L^2$) first variation of $\E_\O,\E_G$.
More explicitly, this means
\begin{equation}
\label{eq:computation_gradient}
  \grad_{\Wrr}\E(\rho)=\left(
  -\dive(\o\nabla E_\O'(\o))
  \quad, \quad
  -\dive(\g\nabla E_\G'(\g)) +\g\frac{E_\G'(\g)-E_\O'(\o)}{\k^2}
\right)
\end{equation}
with the implicit compatibility assumption that
$$
\o\nabla E_\O'(\o)\cdot n= \g \frac{E_\G'(\g)-E_\O'(\o) }{\k^2}
\hspace{1.5cm}
\mbox{on }\pO.
$$
The next step natural step would be to consider gradient flows
$$
\partial_t\rho=-\grad_{\Wrr}\E(\rho).
$$
This will be investigated in a subsequent work,
but for the record let us discuss here two popular cases:
\begin{enumerate}
 \item 
 \emph{The relative Boltzmann entropy}.
 Take any two potentials $V_\O\in \C^1(\bO),V_\G\in\C^1(\G)$, and define the Gibbs measure
 $$
 \pi:=(\pi_\O,\pi_\G)=\frac{1}{Z}\left(e^{-V_\O}\operatorname{Leb}_\O,e^{-V_\G}\operatorname{Leb}_\G\right)\in\Pp(\bO)
 $$
 for a unique normalizing constant $Z$ such that $\pi_\O+\pi_\G\in\P(\bO)$ has mass one.
 For $H(z):=z\log z$ we define the relative entropy
 \begin{multline*}
 \E(\rho)=\H(\rho|\pi):=
 \int_\O H\left(\frac{\o}{\pi_\O}\right)\rd\pi_\O
 +\int_\G H\left(\frac{\g}{\pi_\G}\right)\rd\pi_\G
 \\
 =
 \int_\O \left\{\o\log\o +\o V_\O\right\}\rd x
 +\int_\G \left\{\g\log\g +\o V_\G\right\}\rd x.
 \end{multline*}
 This gives $E_\O'(\o)=\log\o +V_\O +1$, $E_\G'(\g)=\log\g +V_\G +1$, and the resulting system of PDEs is the coupled system of heat equations
 $$
 \left\{
 \begin{array}{ll}
  \partial_t\o =\Delta\o +\dive(\o\nabla V_\O) & \mbox{in }\O
  \\
  \frac{\partial \o}{\partial n}+\o\frac{\partial V_\O}{\partial n} 
  =
  \frac{\g}{\k^2} (\log \g +V_{\G}-\log\o -V_\O)  & \mbox{on }\pO
 \end{array}
 \right.
 $$
 and
 $$
 \partial_t\g =\Delta \g +\dive(\g\nabla V_\G)-\frac{\g}{\k^2}(\log \g +V_{\G}-\log\o -V_\O)
 \quad\mbox{in }\G.
 $$
 \item
 \emph{The R\'enyi entropy} .
 Choose a pair of exponents $m=(m_\O,m_\G)$ with $m_\O,m_\G>1$, and let
 $$
 \E_m(\rho):=
 \int_\O \frac{\o^{m_\O}}{m_\O-1}\rd x
 +
 \int_\G \frac{\g^{m_\G}}{m_\G-1}\rd x .
 $$
 This gives $E_\O'(\o)=\frac{m_\O}{m_\O-1}\o^{m_\O-1}, E_\G'(\g)=\frac{m_\G}{m_\G-1}\g^{m_\G-1}$ and the gradient flow read now as a system of Porous Medium Equations \cite{otto2001geometry}
 $$
\left\{
 \begin{array}{ll}
  \partial_t\o =\Delta\o^{m_\O}  & \mbox{in }\O
  \\
  \frac{\partial\left(\o^{m_\O} \right)}{\partial n}
  =
  \frac{\g}{\k^2} \left(\frac{m_\G}{m_\G-1}\g^{m_\G-1}  -  \frac{m_\O}{m_\O-1}\o^{m_\O-1}\right)  & \mbox{on }\pO
 \end{array}
 \right.
 $$
 and
 $$
 \partial_t\g =\Delta \g^{m_\G}-\frac{\g}{\k^2}\left( \frac{m_\G}{m_\G-1}\g^{m_\G-1}  -  \frac{m_\O}{m_\O-1}\o^{m_\O-1}\right)
 \qqtext{in }\G.
 $$
\end{enumerate}


\subsection*{Acknowledgments}
The author warmly thanks C. Canc\`es and B. Merlet for discussions pertaining to this work, and D. Vorotnikov for his help with the proof of Theorem~\ref{theo:explicit_1D_geodesics}.
This work was funded by the Portuguese Science Foundation through FCT Project PTDC/MAT-STA/28812/2007 {\it Schr\"oMoka}.


\bigskip
\noindent
{\sc
L\'eonard Monsaingeon (\href{mailto:leonard.monsaingeon@univ-lorraine.fr}{\tt leonard.monsaingeon@univ-lorraine.fr}).
\\
Institut \'Elie Cartan de Lorraine, Universit\'e de Lorraine, Site de Nancy B.P. 70239, F-54506 Vandoeuvre-l\`es-Nancy Cedex, France
\\
Grupo de F\'isica Matem\'atica, GFMUL, Faculdade de Ci\^encias, Universidade de Lisboa, 1749-016 Lisbon, Portugal.
}

\end{document}